\declaretheorem[name=Definition,style=definition,qed=$\dashv$,
numberwithin=section]{dfn}
\declaretheorem[name=Example,style=definition,sibling=dfn]{exm}
\declaretheorem[name=Theorem,style=plain,sibling=dfn]{tm}
\declaretheorem[name=Theorem,style=plain,numbered=no]{tm*}
\declaretheorem[name=Fact,style=plain,sibling=dfn]{fact}
\declaretheorem[name=Fact,style=plain,numbered=no]{fact*}
\declaretheorem[name=Lemma,style=plain,sibling=dfn]{lem}
\declaretheorem[name=Corollary,style=plain,sibling=dfn]{cor}
\declaretheorem[name=Remark,style=definition,sibling=dfn]{rem}
\declaretheorem[name=Claim,style=plain]{clm}
\declaretheorem[name=Claim,style=plain,numbered=no]{clm*}
\declaretheorem[name=Sublaim,style=plain,numbered=no]{sclm*}
\newcommand{\Lim}{\mathrm{Lim}}
\newcommand{\Eee}{\mathscr{E}}
\newcommand{\iso}{\cong}
\newcommand{\sub}{\subseteq}
\newcommand{\cross}{\times}
\newcommand{\all}{\forall}
\newcommand{\inter}{\cap}
\renewcommand{\int}{\inter}
\newcommand{\om}{\omega}
\newcommand{\pow}{\mathcal{P}}
\newcommand{\OR}{\mathrm{OR}}
\newcommand{\Hull}{\mathrm{Hull}}
\newcommand{\cut}{\backslash}
\newcommand{\Ll}{\mathcal{L}}
\newcommand{\rg}{\mathrm{rg}}
\newcommand{\dom}{\mathrm{dom}}
\newcommand{\crit}{\mathrm{crit}}
\newcommand{\rest}{\!\upharpoonright\!}
\newcommand{\com}{\circ}
\newcommand{\Ult}{\mathrm{Ult}}
\newcommand{\sats}{\models}
\newcommand{\elem}{\preccurlyeq}
\newcommand{\AC}{\mathsf{AC}}
\newcommand{\HOD}{\mathrm{HOD}}
\newcommand{\ZFC}{\mathsf{ZFC}}
\newcommand{\ZF}{\mathsf{ZF}}
\newcommand{\id}{\mathrm{id}}
\newcommand{\trancl}{\mathrm{trancl}}
\newcommand{\scot}{\mathrm{scot}}
\newcommand{\psub}{\subsetneq}
\newcommand{\lpole}{\left\lfloor}
\newcommand{\rpole}{\right\rfloor}
\newcommand{\univ}[1]{\lpole #1\rpole}
\newcommand{\tu}{\textup}
\DeclareMathOperator{\Th}{Th}
\DeclareMathOperator{\cof}{cof}
\DeclareMathOperator{\rank}{rank}
\begin{document}
\date{November 15, 2020}

\title{Periodicity in the cumulative hierarchy}

\author{Gabriel Goldberg\footnote{G. Goldberg is supported by NSF Grant DMS 1902884.} 
\hspace{.15cm}\& Farmer Schlutzenberg\footnote{
F.~Schlutzenberg is supported by Deutsche Forschungsgemeinschaft (DFG, German 
Research
Foundation) under Germany's Excellence Strategy EXC 2044-390685587,
Mathematics M\"unster: Dynamics-Geometry-Structure.}
}

\maketitle

\begin{abstract}We investigate the structure
of rank-to-rank elementary embeddings, working in $\ZF$ set theory without 
the  Axiom of Choice.
Recall that the levels $V_\alpha$ of the cumulative hierarchy are 
defined via iterated application of the power set operation, 
starting from $V_0=\emptyset$, and taking unions at limit stages.
Assuming that 
\[ j:V_{\alpha+1}\to V_{\alpha+1}\]
is a (non-trivial) elementary embedding,
we show that the structure of $V_\alpha$ is fundamentally
different to that of $V_{\alpha+1}$. 
We show that $j$
is definable from parameters over 
$V_{\alpha+1}$ iff $\alpha+1$
is an odd ordinal. 
Moreover,
if $\alpha+1$ is odd then $j$ is definable over $V_{\alpha+1}$
from the parameter
\[ j`` V_{\alpha}=\{j(x)\bigm|x\in V_\alpha\},\] and 
uniformly so.
This parameter is optimal in that
$j$ is not definable from any parameter which is an element of $V_\alpha$.
In the 
case that $\alpha=\beta+1$, we also give a characterization of such $j$ in 
terms of ultrapower maps via certain ultrafilters.

Assuming $\lambda$ is a limit ordinal,
we prove that if
$j:V_\lambda\to V_\lambda$ is $\Sigma_1$-elementary,
then $j$ is not definable over $V_\lambda$
from parameters,
and if $\beta<\lambda$ and $j:V_\beta\to V_\lambda$ is
fully elementary and $\in$-cofinal, then $j$ is likewise not definable;
note that this last result is relevant to embeddings of much lower consistency 
strength than rank-to-rank.

If there is a Reinhardt cardinal,
then for all 
sufficiently large ordinals $\alpha$,
there is indeed an elementary $j:V_\alpha\to V_\alpha$,
and therefore the cumulative hierarchy is 
eventually \emph{periodic} (with 
period 2).\footnote{MSC2020 classification: 03E55, 
03E25, 03E47}\footnote{Keywords: Large cardinal, Reinhardt cardinal, 
rank-to-rank, elementary embedding, definability, periodicity, cumulative 
hierarchy, Axiom of Choice}

\end{abstract}

\section{Introduction}

The universe $V$ of all sets
is the union of the 
\emph{cumulative hierarchy} \(\langle V_\alpha\rangle_{\alpha\in \OR}\).
Here $\OR$ denotes the class of all ordinals,
and the sets $V_\alpha$ are obtained by iterating the power 
set operation $X\mapsto\pow(X)$ transfinitely, 
starting with $V_0=\emptyset$,
setting $V_{\alpha+1}=\pow(V_\alpha)$,
and $V_\eta=\bigcup_{\alpha<\eta}V_\alpha$ for limit ordinals $\eta$.

Before Cantor's discovery of the transfinite ordinals,
mathematicians typically only considered sets lying
quite low in the infinite levels of the cumulative hierarchy (below 
\(V_{\omega+5}\) say).
Since then our understanding much higher in the hierarchy
has deepened extensively. It is possible
to take the view, however, that most research has been 
focused below a certain threshold,
due to its interaction with the Axiom of 
Choice.
This 
paper investigates certain features of the hierarchy which first appear just 
beyond this threshold.

After some distance, finite intervals in the cumulative hierarchy have the 
appearance of uniformity:
for large infinite limit ordinals \(\gamma\) and large natural numbers 
\(n\) and \(m\), one might expect not to find natural set theoretic properties
which differentiate between \(V_{\gamma+n}\) and 
\(V_{\gamma+m}\):
one might expect  \(V_{\gamma+813}\), 
for example, to be essentially structurally indistinguishable from 
\(V_{\gamma+814}\).
But the key result of this paper shows that assuming $\gamma$ is 
\emph{very large} --- so large, in fact, that the Axiom 
of Choice must be violated ---
\(V_{\gamma+813}\) and \(V_{\gamma+814}\) display fundamental structural 
differences.
More generally, the properties of \(V_{\gamma+n}\) 
depend the parity of \(n\).

Exactly how large must $\gamma$ be 
for these differences to arise?
To answer this question requires introducing some 
basic concepts from the theory of \emph{large cardinals}, 
one of the main areas of research in modern set theory.
The simplest example of a large cardinal\footnote{There is no general
formal definition of ``large cardinal''.} is
an \emph{inaccessible 
cardinal}. An uncountable ordinal $\kappa$ is inaccessible if
every function from \(V_\alpha\) to \(\kappa\) where
\(\alpha < \kappa\) is bounded strictly below 
\(\kappa\).\footnote{An ordinal $\alpha$ is formally equal to
the set of ordinals $\beta<\alpha$, so if $\pi:X\to\kappa$,
then $\pi$ is bounded strictly below $\kappa$
iff there is $\alpha<\kappa$ such that $\pi(\beta)<\alpha$
for all $\beta\in X$.}\footnote{Assuming 
the Axiom of Choice $\AC$,
inaccessibility is usually
defined slightly differently, but under $\AC$,
the definitions are equivalent. The definition we give
here is the appropriate one when one does not assume $\AC$.}
So inaccessible cardinals are ``unreachable from below'',
and form a natural kind of closure point of the set theoretic universe.
 If $\kappa$ is inaccessible then $V_\kappa$ models all of the $\ZF$ axioms,
as does $V_\alpha$ for unboundedly many ordinals $\alpha<\kappa$.
So by G\"odel's Incompleteness Theorem, inaccessible cardinals  cannot 
be proven to exist in \(\ZF\),
and  inaccessibility somehow ``transcends'' $\ZF$.
(The \emph{Zermelo-Fr\"ankel} axioms, denoted $\ZF$,
are the usual axioms of set theory, without the Axiom of Choice $\AC$. And 
$\ZFC$ denotes $\ZF$ augmented with $\AC$.)

Inaccessibles are just the beginning. Further up in the hierarchy, large 
cardinals are typically exhibited by some
form of non-identity \emph{elementary embedding}
\[ j:V\to M\]
from the universe $V$ of all sets to
 some 
transitive\footnote{That is,  for all $x\in M$, we 
have $x\sub M$.} class
$M\sub V$. \emph{Elementarity} demands that $j$ preserve the truth
of all first-order statements in parameters 
between $V$ and $M$ (see \S\ref{subsec:terminology} for details).
One can show that
 there is an ordinal $\kappa$ such that
$j(\kappa) > \kappa$, and the least such ordinal is called the \emph{critical 
point} $\crit(j)$ of $j$; if $\ZFC$\footnote{Under $\ZFC$, this notion is 
equivalent to measurability, but the notions
are not equivalent in general under $\ZF$ alone.} holds then such a critical 
point is known as a \emph{measurable 
cardinal}. 
The critical point of an elementary embedding is inaccessible,
and in fact there are unboundedly many inaccessible cardinals $\eta<\kappa$. 
So such critical points transcend 
inaccessible cardinals. Critical points are transcended by still larger 
large cardinals.

Large cardinal axioms
are by far the most widely accepted and well-studied 
principles extending the standard axioms of set theory.\footnote{An example of a large cardinal 
axiom is the assertion that there is an inaccessible cardinal or
the assertion that there is a critical point cardinal.
While there is no formal definition of the term ``large cardinal axiom'',
there is little controversy over which principles qualify as large cardinal axioms.} 
One of the main reasons for this is the empirical fact
that large cardinal axioms are arranged in an essentially
linear hierarchy of strength,
with each large cardinal notion typically transcending all the preceding 
ones.\footnote{This is a bit of an oversimplification.}
There is no known example of a pair of incompatible large cardinal axioms,
and the linearity phenomenon suggests that none will ever arise.

The strength of a large cardinal notion $j:V\to M$ depends in large part on the 
extent to 
which $M$ resembles $V$ and contains fragments of $j$. 
So taking the notion
to its logical extreme, William Reinhardt suggested in his dissertation
taking $M=V$; that is, a (non-identity) elementary embedding
\[ j:V\to V. \]
The critical point of such an embedding became known as a \emph{Reinhardt 
cardinal}.
But Kunen proved in \cite{kunen_no_R} (see also \cite{dimonte} and 
\cite{gen_kunen_incon}) that, assuming
$\ZFC$, they do not exist.
In fact, suppose $j:V\to M$ is elementary where $M\sub V$ is a transitive 
class and $j$ is not the identity. 
Letting 
$\kappa_0=\crit(j)$ and
$\kappa_{n+1}=j(\kappa_n)$, then because $j$ is order-preserving on ordinals 
(an easy
consequence of elementarity),
\[ \kappa_0<\kappa_1<\ldots<\kappa_n<\ldots. \]
Let their supremum be
$\lambda=\sup_{n<\om}\kappa_n$.
We write $\kappa_{n}(j)=\kappa_n$ and $\kappa_{\om}(j)=\lambda$.
Kunen proved in \cite{kunen_no_R} (from $\ZFC$) that
 $V_{\lambda+1}\not\sub M$. He also proved that there is no ordinal
$\lambda'$ and elementary embedding
\[ j:V_{\lambda'+2}\to V_{\lambda'+2}.\]
So $\AC$ enforces a rather
abrupt 
upper limit to the large cardinal hierarchy.

But it has remained
a mystery whether $\AC$ is actually needed to 
prove
there can be no elementary $j:V\to V$.
Suzuki \cite{suzuki_no_def_j} showed in \(\ZF\) alone that
such a $j$ \emph{cannot} be definable from parameters
over $V$.
This leads to a metamathematical question: what exactly is a class?
In the most restrictive formulation,
classes are all definable from parameters,
so in this setting, Suzuki's result rules out an elementary $j:V\to V$
from $\ZF$ alone, and the matter is settled  -- though not the 
$j:V_{\lambda+2}\to V_{\lambda+2}$ matter,
which is immune to Suzuki's argument. But one can also formulate 
classes 
more generally,
and appropriately formulated,
there is no known way to 
disprove the 
existence of $j:V\to V$ without $\AC$. For the most part in this paper,
we  focus anyway on embeddings of set size, so the precise
definition of classes is not so important for us here.\footnote{In 
\S\ref{sec:large_enough} we will deal with actual Reinhardt
cardinals, and will mention an appropriate formulation of classes there.} 

Note that one can state Kunen's result from a different angle:
if $j:V\to V$ is elementary and 
$\lambda=\kappa_\om(j)$,
then there is a failure of $\AC$
within $V_{\lambda+2}$.
In this sense, very strong elementary 
embeddings limit the extent
to which $\AC$ can be valid, and set theory under its assumption
can be seen as focusing on sets inside $V_\lambda$,
below the threshold where $\AC$ breaks down.

In the last few years, there has been growing 
interest in investigating large cardinal notions like $j:V\to V$ and beyond,
assuming  $\ZF$ or second order $\ZF$, often augmented with fragments of 
$\AC$ (and also large cardinal notions below this level,
but without assuming $\AC$).\footnote{See for 
example  
\cite{woodin_koellner_bagaria}, \cite{sarg_apt_jonsson_V_to_V},
\cite{cutolo_structure}, 
 \cite{con_lambda_plus_2},
 \cite{critical_cardinals}, 
\cite{short_note_vlc},
\cite{goldberg_even_numbers},
\cite{suzuki_no_def_j},
 \cite{usuba_ls},
\cite{cutolo_cofinality},
 \cite{super_rein},
 \cite{extenders_ZF}.}
 This paper sits 
within that line of investigation,  just beyond the level which 
violates choice, focusing on elementary, or 
at least $\Sigma_1$-elementary,\footnote{That is, 
$V_\alpha\sats\varphi(\vec{x})$ iff $V_\alpha\sats\varphi(j(\vec{x}))$
for all $\Sigma_1$ formulas $\varphi$ and $\vec{x}\in 
V_\alpha^{<\om}$.} embeddings of the form\[ j:V_\alpha\to 
V_\alpha\]
with $\alpha$ an ordinal. Generalizing some standard terminology,
we call these
\emph{rank-to-rank} 
embeddings,\footnote{In the $\ZFC$ context, by Kunen's Theorem,
the only rank-to-rank embeddings in this strict sense are $k:V_\lambda\to 
V_\lambda$
or $k:V_{\lambda+1}\to V_{\lambda+1}$ where $\lambda=\lambda(k)$
(his proof does rule out a $\Sigma_1$-elementary $k:V_{\lambda+2}\to 
V_{\lambda+2}$).
The $I_0$ embeddings $j:L(V_{\lambda+1})\to L(V_{\lambda+1})$
are also traditionally known as \emph{rank-to-rank} embeddings,
even if the terminology does not seem to quite match reality in that case.
We adopt the same \emph{rank-to-rank} terminology for $\Sigma_1$-elementary 
$j:V_\alpha\to 
V_\alpha$ in general because it is very natural.}
because $V_\alpha$ is a \emph{rank initial segment}
of $V$. If there is a Reinhardt cardinal
then there is an ordinal $\lambda$ such that for all $\alpha\geq\lambda$,
there is an elementary $j:V_\alpha\to V_\alpha$;
see Theorem \ref{tm:lambda_rank-Berkeley}.

We primarily consider the following question, with $\ZF$ as background theory.
Let $\alpha$ be an 
ordinal and $j:V_\alpha\to V_\alpha$ be elementary. Is $j$ definable 
from parameters over $V_\alpha$? That is, we investigate whether there is $p\in 
V_\alpha$
and some formula $\varphi$ in the language of set theory
(with binary predicate symbol $\in$ for membership) such that for all 
$x,y\in V_\alpha$, we have
\[ j(x)=y\iff V_\alpha\sats\varphi(p,x,y),\]
where $\sats$ is the usual model theoretic truth satisfaction
relation.

It turns out that there is a very simple answer to this question,
generalizing Suzuki's theorem, but with a twist.
We say that an ordinal $\alpha$ is \emph{even} iff $\alpha=\eta+2n$ for some
$n<\om$, with $\eta=0$ or $\eta$ a limit ordinal.
Naturally, \emph{odd} means not even.

\begin{tm}\label{tm:cumulative_periodicity}\footnote{This theorem
is also proved in \cite{goldberg_even_numbers}, where the theorem
is then applied in generalizing  Woodin's $I_0$ theory. In the present paper,
we focus on Theorem \ref{tm:cumulative_periodicity}
and closely related results, some of which are lemmas toward its 
proof, and some of which extend it. There is 
more discussion of those at the end of this introduction.}
 Let $j:V_\alpha\to V_\alpha$ be fully elementary, with $j\neq\id$.
 Then $j$ is definable from parameters over $V_\alpha$
 iff $\alpha$ is odd.
\end{tm}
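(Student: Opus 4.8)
The plan is to split the theorem into its two directions and handle the ``hard'' direction (odd $\alpha$ $\Rightarrow$ $j$ definable) and the ``easy'' direction (even $\alpha$ $\Rightarrow$ $j$ not definable) by an induction that alternates between successor steps of parity $+1$. First I would set up the base case. If $\alpha$ is a limit ordinal or $0$, then $\alpha$ is even, and I would show $j$ is not definable over $V_\alpha$: a definition of $j$ from a parameter $p\in V_\alpha$ would, by elementarity and the fact that $j$ fixes a tail of the parameters needed to express its own definition, lead to a contradiction in the style of Suzuki's theorem (one uses that $j(p)$ is again a parameter defining an embedding, compares $\crit(j)$ against the rank of $p$, and derives that $j$ must fix its own critical point). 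Actually, since the excerpt's abstract already isolates the limit case separately, for the purposes of Theorem~\ref{tm:cumulative_periodicity} I would simply invoke (or re-prove in this self-contained form) the statement that for $\lambda$ a limit, no $\Sigma_1$-elementary $j:V_\lambda\to V_\lambda$ is definable from parameters; this gives the base of the induction.

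Next comes the inductive engine, which is where the real content lies. Suppose the result is known at $V_\alpha$ and we wish to deduce it at $V_{\alpha+1}$, whose parity is opposite to that of $\alpha$. The key observation is that any elementary $j:V_{\alpha+1}\to V_{\alpha+1}$ restricts to an elementary $j\rest V_\alpha:V_\alpha\to V_\alpha$ (elementarity at level $\alpha+1$ is strictly stronger), and conversely $j$ is determined by $j\rest V_\alpha$ together with the action on subsets of $V_\alpha$, which is in turn determined by $j``V_\alpha$. So the two levels are tightly linked. For the direction ``$\alpha+1$ odd $\Rightarrow$ $j$ definable'': here $\alpha$ is even, so by the induction hypothesis $j\rest V_\alpha$ is \emph{not} definable over $V_\alpha$ --- but I claim one can still recover $j$ over $V_{\alpha+1}$ from the parameter $j``V_\alpha\in V_{\alpha+2}$... wait, $j``V_\alpha$ is a subset of $V_\alpha$, hence an element of $V_{\alpha+1}$, so it is a legitimate parameter. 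The definition is: given $x\in V_{\alpha+1}$, i.e. $x\subseteq V_\alpha$, set $j(x)=\{\,j(y)\mid y\in x\,\}$, and $j(y)$ for $y\in V_\alpha$ is recovered from the parameter $P=j``V_\alpha$ by: $j(y)$ is the unique $z\in P$ such that $(V_\alpha;\in,\{y\})\equiv(V_\alpha;\in,\{z\})$ under the back-and-forth/elementarity given by $j$ --- more carefully, $j``V_\alpha$ together with the fact that $j$ is an isomorphism of $(V_\alpha,\in)$ onto $(j``V_\alpha,\in)$ pins down $j\rest V_\alpha$ as the unique isomorphism (one must check rigidity: $(V_\alpha,\in)$ has no nontrivial automorphism fixing a transitive submodel appropriately, or use that $j\rest V_\alpha$ is the transitive collapse composed with the inclusion). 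Then $j$ on all of $V_{\alpha+1}$ is defined from $P$ by the displayed formula, uniformly. I would need to verify this definition is correct, i.e., that the isomorphism is unique, which is the crux.

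For the other direction, ``$\alpha+1$ even $\Rightarrow$ $j$ not definable'': now $\alpha$ is odd, so by the induction hypothesis $j\rest V_\alpha$ \emph{is} definable over $V_\alpha$ from some $q\in V_\alpha$. Suppose toward a contradiction $j$ were definable over $V_{\alpha+1}$ from $p\in V_{\alpha+1}$. I would run a Suzuki-style argument at level $\alpha+1$: consider the critical point $\kappa=\crit(j)$ and note that, since $j$ is definable from $p$ and $j``V_\alpha$ is definable from $p$ (as $j\rest V_\alpha$ is), the structure $(V_{\alpha+1};\in,p)$ ``knows'' $j$; applying $j$ and using elementarity, $j(p)$ defines $j$ as well, and then a fixed-point / diagonalization argument (choosing a suitable ordinal or set coded in $V_{\alpha+1}$ above the rank of $p$ on which $j$ is forced to be the identity yet cannot be) yields the contradiction. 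The main obstacle I anticipate is precisely this even-successor non-definability step: making the Suzuki argument go through at $V_{\alpha+1}$ requires carefully exploiting that the definability at level $\alpha$ (which we \emph{have}) does not ``leak upward'' --- one has to use that the parameter $j``V_\alpha$, though definable over $V_{\alpha+1}$ from $p$, is \emph{not} an element of $V_\alpha$, and combine this with a counting/rank argument showing any putative parameter $p$ is ``too small'' to capture the extra information. I expect the bookkeeping of ranks and the verification of the rigidity claim in the odd case to be the two places where care is most needed; everything else is a routine unwinding of elementarity and the definition of the cumulative hierarchy.
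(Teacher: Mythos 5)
Your overall architecture (a parity-alternating induction, with the limit case as base, the parameter $j``V_\alpha$ at odd successors, and a Suzuki-style diagonalization at even successors) matches the paper's in outline, but two essential steps are wrong or missing.

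First, the odd-successor direction. Recovering $j\rest V_\alpha$ from $P=j``V_\alpha$ via the Mostowski collapse is fine, but your displayed recovery of $j$ on $V_{\alpha+1}$, namely $j(x)=\{j(y)\mid y\in x\}$ for $x\sub V_\alpha$, is false: in general $j``x\psub j(x)$ (e.g.\ for $\alpha$ a limit and $x$ the set of ordinals below $\alpha$, $j(x)=x$ while $j``x$ omits $\crit(j)$). The correct recovery is the \emph{canonical extension} $j=(j\rest V_\alpha)^+$, where $j^+(x)$ is the union of $j$ applied to all subsets of $x$ that are suitably coded in $V_\alpha$, and the statement that $j$ actually \emph{equals} this canonical extension of its restriction is precisely the nontrivial inductive content. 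It holds because $j\rest V_\alpha$ is \emph{cofinal} (every $c\in V_\alpha$ lies in a set of the form $(j(a)\mid j(b))$ for $a,b\in V_\alpha$), which at even successor $\alpha$ is proved by a genuine argument (the paper's Lemma \ref{AutoCofinal}: one codes all functions $V_{\alpha-2}\to V_{\alpha-2}$ into a single fixed set $B$ with $j(B)=B$ and evaluates a suitable $j(A)$ at the section indexed by $j\rest V_{\alpha-2}$). Your induction carries the wrong invariant: knowing that $j\rest V_\alpha$ is \emph{not definable} over $V_\alpha$ gives you nothing toward $j=(j\rest V_\alpha)^+$; what must be propagated is cofinality/canonical extendibility.

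Second, the even-successor non-definability. You correctly flag this as the main obstacle but the mechanism you propose (a rank or counting argument showing $p$ is ``too small'') cannot work: at $\delta=\eta+2$ a parameter of rank $\eta+1$ is available, and rank considerations only exclude parameters of smaller rank (that is the content of Theorem \ref{LimitUndefinable}). The actual argument shows that $j$ is never definable from a parameter lying in a set of the form $(j(a)\mid j(b))$ --- because then the minimization of critical points over ``bad'' parameters can be carried out inside the coded family $(c\mid d_\omega)$ and pulled back through $j$, contradicting that $\crit(j)$ is moved --- and then uses cofinality (again the inductive invariant) to see that \emph{every} $p\in V_\delta$ lies in some $(j(a)\mid j(b))$. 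Without the coding apparatus and the cofinality input, your Suzuki-style sketch does not close.
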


The proof appears at the end of \S\ref{sec:def_rank-to-rank},
and then a second, slightly different proof is sketched in Remark 
\ref{rem:proof_2_cumulative_periodicity}.

So if there is an elementary
$j:V_{\eta+184}\to V_{\eta+184}$ (and hence an elementary embedding from $V_{\eta+183}$ to 
$V_{\eta+183}$, namely \(j\restriction V_{\eta+183}\)),
then $V_{\eta+183}$ and $V_{\eta+184}$ are indeed different
(but $V_{\eta+182}$ analogous to $V_{\eta+184}$, etc).
The proof will also yield much more information about such embeddings,
and in the successor case, give a characterization of them,
and reveal strong structural 
differences between the odd and even levels
which admit such embeddings.
A consequence of Theorem \ref{tm:lambda_rank-Berkeley} will also be that if 
there is a 
Reinhardt cardinal, and $j:V\to V$,
then \emph{all} ordinals $\eta\geq\kappa_{\om}(j)$ are indeed large enough 
for this \emph{periodicity} phenomenon to take hold.

Periodicity phenomena (with period 2) 
are of course a familiar feature of logical quantifiers:
$\all x_0\exists y_0\all x_1\exists y_1\ldots$
They are pervasive in descriptive 
set theory
(in particular in the Periodicity Theorems, see \cite{mosch}).
But in such cases, which arise in the analysis of complexity classes
and so forth arising from quantifier alternation, the 
periodicity is built into the definitions in the first place. 
This particular instance of periodicity shows up more subtly in \emph{inner 
model theory},
in particular regarding the canonical inner model $M_n$ with $n$
\emph{Woodin cardinals}, where $n$ is finite;\footnote{$M_0$ is just G\"odel's 
constructible universe $L$.} Woodin cardinals are beyond measurables, 
but well below those we consider in this paper. It turned out that $n$ Woodin
cardinals corresponds tightly to $n$ alternations of quantifiers
over real numbers, and this has the result that many important features
of $M_n$ depend on the parity of $n$.
However, the basic definition of $M_n$ (and similarly for $n$ measurable 
cardinals etc) does not have any obvious dependence on parity built into it.
The periodicity
present in Theorem \ref{tm:cumulative_periodicity} is in this sense
analogous
to the case of $M_n$.
In both  cases  just 
mentioned
and Theorem \ref{tm:cumulative_periodicity}, there are 
stark differences
between the even and odd sides. 
The periodicity in the $V_\alpha$'s also seems to manifest certain
``$\all$/$\exists$'' features, although the full nature of this is probably as 
of yet  not understood.

In \S\ref{sec:reinhardt_filters}
we present a different perspective on elementary
$j:V_{\alpha+2}\to 
V_{\alpha+2}$,
relating such elementary embeddings
to ultrapower embeddings via associated ultrafilters, and sketch
the proof
of Theorem \ref{tm:cumulative_periodicity} for successor ordinals again, from 
this new perspective.
We also establish a characterization of 
such $j$
 in terms of ultrapower embeddings.
\footnote{There is an important subtlety here.
We will identify a certain ultrafilter $U$ and form the ultrapower 
$U=\Ult(V_{\alpha+2},U)$, and define $i:V_{\alpha+2}\to U$ to be the 
ultrapower map. We will show that $i=j$, i.e., these maps have the same 
graph. If $\alpha+2$ is even,
we will also show $U=V_{\alpha+2}$.
But if $\alpha+2$ is odd, then $U\psub V_{\alpha+2}$.}
The results here also demonstrate that, although $j:V_{\alpha+2}\to 
V_{\alpha+2}$ is incompatible with $\AC$,
the existence of such embeddings does actually \emph{imply} certain weaker 
choice principles (see Remark \ref{rem:choice_princ}).\footnote{This is 
analogous to the fact that the Axiom 
of Determinacy, while inconsistent with $\AC$, also implies certain weak choice 
principles.}

In \S\ref{sec:limit_Sigma_1_elementarity} we prove some more 
general results
in the limit case; in particular:

\begin{tm*}[\ref{tm:no_def_Sigma_1_V_delta_to_V_delta},
\ref{tm:j:V_eta_to_V_delta}] Let $\beta\leq\delta$ be limit ordinals
and $j:V_\beta\to V_\delta$ be 
$\Sigma_1$-elementary
and $\in$-cofinal, and suppose  that either $\beta=\delta$, or $j$ is fully 
elementary.
Then $j$ is not 
definable over $V_\delta$ from parameters.
\end{tm*}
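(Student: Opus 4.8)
The plan is to assume for a contradiction that the graph of $j$ is definable over $V_\delta$: fix a formula $\varphi$ and a parameter $p\in V_\delta$ with $j(x)=y$ iff $V_\delta\models\varphi(p,x,y)$ for all $x\in V_\beta$ and $y\in V_\delta$. I would then split into the cases $\beta<\delta$ (Theorem~\ref{tm:j:V_eta_to_V_delta}) and $\beta=\delta$ (Theorem~\ref{tm:no_def_Sigma_1_V_delta_to_V_delta}), which call for different arguments.

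\emph{Case $\beta<\delta$.} The idea is to play $\in$-cofinality off against reflection. As $j$ is $\in$-cofinal, $\bigcup_{x\in V_\beta}j(x)=V_\delta$, so the ranks of the sets $j(x)$ ($x\in V_\beta$) are unbounded in $\delta$, and hence $j``V_\beta\not\sub V_\gamma$ for every $\gamma<\delta$. On the other hand $V_\delta\models\all x\in V_\beta\,\ex y\,\varphi(p,x,y)$ (witness $j(x)$), and $V_\beta\in V_\delta$. Reflecting this fact, together with $\varphi$ and finitely many subformulas, down to a limit $\gamma<\delta$ with $\beta,p\in V_\gamma$ and $V_\gamma\prec_\Phi V_\delta$ for the resulting finite set $\Phi$, we get $V_\gamma\models\all x\in V_\beta\,\ex y\,\varphi(p,x,y)$; so for each $x\in V_\beta$ there is $y\in V_\gamma$ with $V_\gamma\models\varphi(p,x,y)$, whence $V_\delta\models\varphi(p,x,y)$ by absoluteness of $\varphi$, whence $j(x)=y\in V_\gamma$. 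Thus $j``V_\beta\sub V_\gamma$, a contradiction. The one delicate point, which I expect to be the crux of this case, is securing a suitable reflecting $\gamma<\delta$: internal reflection inside $V_\delta$ can fail for badly singular $\delta$ (such as $\delta=\aleph_\om$). Here full elementarity and $\in$-cofinality should be decisive, since $\rg(j)\prec V_\delta$ is $\in$-cofinal and is, via $j^{-1}$, isomorphic to the transitive set $V_\beta$; this ``transitive elementary shrink'' of $V_\delta$ should force enough reflectivity (alternatively one runs the argument with the definable-from-$p$ substructure $\rg(j)$ directly in place of a level $V_\gamma$, its transitive collapse being the level $V_\beta$ while $\rg(j)$ itself is $\in$-cofinal in $V_\delta$).

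\emph{Case $\beta=\delta$.} Now $j:V_\delta\to V_\delta$ is assumed only $\Sigma_1$-elementary ($\in$-cofinality being automatic for a self-embedding of a limit level), so this properly extends Suzuki's theorem. Note, though, that Theorem~\ref{tm:cumulative_periodicity} already yields the conclusion once $j$ is \emph{fully} elementary, since a limit ordinal is even; so one clean route is to prove, by a Gaifman-style induction carried out with only the set theory available in $V_\delta$ (Zermelo rather than full $\ZF$---this is the obstacle for this route), that a $\Sigma_1$-elementary $j:V_\delta\to V_\delta$ is fully elementary, and then quote Theorem~\ref{tm:cumulative_periodicity}. Independently, I would attempt a direct Suzuki-style argument: $j$ has a critical point $\kappa$ (if $j$ fixed every ordinal it would be the identity, by induction on rank using $\Delta_0$-elementarity and $\in$-cofinality), and $j\rest V_\kappa=\id$; hence $j$ is not definable over $V_\delta$ from any parameter fixed by $j$, in particular not from any parameter in $V_\kappa$---for if $\varphi(a,\cdot,\cdot)$ defined $j$ with $j(a)=a$, then $\kappa$, being definable from $a$ as the critical point of that map, would satisfy $j(\kappa)=\kappa$, contradicting $j(\kappa)>\kappa$. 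The hard part, and where the bulk of the work lies, is eliminating parameters not fixed by $j$; for this I would pass to a parameter of least rank defining $j$ by a fixed formula (using a $\Sigma_1$-satisfaction predicate to make ``this formula defines a $\Sigma_1$-elementary $\in$-cofinal non-identity self-map'' a single assertion), note that this least rank is a definable, hence $j$-fixed, ordinal above $\kappa_\om(j)$, and combine this with the action of $j$ on $V_{\kappa_\om(j)}$ and Theorem~\ref{tm:cumulative_periodicity} applied at that level.
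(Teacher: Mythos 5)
Both halves of your proposal have genuine gaps, and in each case the missing ingredient is precisely the mechanism the paper builds. For the case $\beta=\delta$: your ``clean route'' rests on the claim that a $\Sigma_1$-elementary $j:V_\delta\to V_\delta$ is fully elementary, and this is not merely obstructed by Zermelo-versus-$\ZF$ issues --- it is false. The Lietz--Schlutzenberg theorem at the end of \S\ref{sec:limit_Sigma_1_elementarity} produces (from an embedding of $V_{\lambda^+}$) $\Sigma_1$-elementary, $\in$-cofinal $k:V_\delta\to V_\delta$ which are not $\Sigma_2$-elementary, and indeed whose first $n$ iterates all fail $\Sigma_2$-elementarity. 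Your direct Suzuki-style route stalls exactly where you say it does, and the proposed repair does not close it: the minimal-critical-point argument needs to pull back along $j$ an assertion of the form ``there is a bad parameter $q$ with $\crit(j_q)$ small'', and ``$j_q$ is a nontrivial $\Sigma_1$-elementary self-map'' is (at least) $\Pi_2$ over $V_\delta$, so mere $\Sigma_1$-elementarity of $j$ cannot transfer it; and invoking Theorem \ref{tm:cumulative_periodicity} at level $\kappa_\om(j)$ does not help, since definability of $j$ over $V_\delta$ gives no definability of $j\rest V_{\kappa_\om(j)}$ over $V_{\kappa_\om(j)}$. The paper's actual device, absent from your sketch, is iteration: setting $j_0=j$ and $j_{n+1}=j_n^+(j_n)$, Theorem \ref{tm:an_iterate_is_elementary} shows some iterate $j_n$ is ``nicely stable'' (it fixes $\cof(\delta)$, or $\scot(\delta)$ in the regular non-inaccessible case) and that such a $\Sigma_1$-elementary map is fully elementary; since each $j_n$ is definable from the same parameters as $j$, Theorem \ref{LimitUndefinable} applies to $j_n$ and gives the contradiction.

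For the case $\beta<\delta$: the ``delicate point'' you flag is a fatal gap, not a technicality. Internal reflection $V_\gamma\prec_\Phi V_\delta$ with $\gamma<\delta$ is not provable in $\ZF$ for limit $\delta$, and in your situation it provably fails above $\rank(p)$: your own computation shows that any $\Phi$-reflecting $\gamma>\max(\beta,\rank(p))$ would force $j``V_\beta\sub V_\gamma$, while $\varphi$ and $p$ define the cofinal map $j\rest\beta:\beta\to\delta$ over $V_\delta$; so what the argument really establishes is that no such reflecting level exists, which is no contradiction. The fallback of using $\rg(j)$ in place of $V_\gamma$ collapses for the same reason: $\rg(j)$ is $\in$-cofinal in $V_\delta$, so $j``V_\beta\sub\rg(j)$ contradicts nothing, and the fact that its transitive collapse is $V_\beta$ supplies no internal level of $V_\delta$ containing $p$. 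The paper's proof of Theorem \ref{tm:j:V_eta_to_V_delta} runs quite differently: it first proves (part 2) that if $\mu=\cof(\beta)=\cof(\delta)$ and $j(\mu)=\mu$, then $j$ is fully elementary, even as a map $(V_\beta,A)\to(V_\delta,j(A))$ for arbitrary $A\sub V_\beta$; it then observes that $\mu$ is definable over $V_\delta$ without parameters and is fixed by $j$, calls $q$ good if $\varphi(q,\cdot,\cdot)$ defines a $\Sigma_1$-elementary, $\in$-cofinal $j_q:V_{\eta'}\to V_\delta$ with $j_q(\mu)=\mu$ (so every good $j_q$ is fully elementary by part 2), and runs the Suzuki least-critical-point argument over good parameters. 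Some such anchor --- an object fixed by $j$ and definable without parameters, together with full elementarity of all competitor embeddings --- is what your reflection scheme lacks and would need.
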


Note that the $\beta<\delta$ case of this theorem applies to embeddings
which are compatible with $\AC$, in fact just around the level of extendible 
cardinals.

Finally, in \S\ref{sec:large_enough}, we discuss an old observation:
 if there is a Reinhardt cardinal, then there is an ordinal $\lambda$
such that for \emph{every} $\alpha\geq\lambda$, there is an elementary 
$j:V_\alpha\to V_\alpha$. So above $\lambda$, Theorem 
\ref{tm:cumulative_periodicity} applies, showing that cumulative hierarchy
(and correspondingly, the power set operation) is eventually periodic in nature.

Sections \S\ref{subsec:terminology} 
and \ref{sec:suzuki} cover background material.

We note some history on the development of the work. The results on the 
limit case in 
\S\ref{LimitSection} and \S\ref{sec:limit_Sigma_1_elementarity} are due to the 
second author,
and most of that material appeared in the informal notes
\cite{reinhardt_non-definability} (part 
\ref{item:Sigma_1_elem_j_not_in_own_Ult} of Theorem 
\ref{tm:no_def_Sigma_1_V_delta_to_V_delta},
and Theorem \ref{tm:j:V_eta_to_V_delta}, came later).
The analysis of embeddings $j:V_{\lambda+n}\to 
V_{\lambda+n}$ for limit $\lambda$ and $n=2$ in terms of Reinhardt ultrafilters,
in \S\ref{sec:reinhardt_filters}, was 
discovered in some form by the first author in 2017, and he 
communicated this to the second author shortly after the release of 
\cite{reinhardt_non-definability}. The first author then  
discovered Theorem \ref{tm:cumulative_periodicity}, 
and used this to generalize 
Woodin's $I_0$-theory
to higher levels (see
\cite{goldberg_even_numbers}).
A few months later, also attempting to generalize the first author's 
analysis of embeddings to $n>2$, the second 
author rediscovered Theorem \ref{tm:cumulative_periodicity}.
Our two proofs of non-definability in the even successor case
(Theorem \ref{EvenUndefinable}) were
different;
the one we give here is that due to the second author. The original one,
due to the
first author, can be seen in \cite{goldberg_even_numbers}.

\subsection{Terminology, notation, basic facts}\label{subsec:terminology}

We will assume the reader is 
familiar with basic first-order logic and set theory.
But much of the material, particularly in the earlier parts of the paper,
does not require extensive
background in set theory, so we aim to make 
at least those parts fairly broadly 
accessible.
Therefore  we do explain some points in the paper which are  standard, and 
summarize in this section some basic facts for convenience;
the reader should refer to texts like \cite{kunen_set_theory_2011}
for more details.

The language of set theory is the first-order language
with the membership relation $\in$. The Zermelo-Fr\"ankel
axioms are denoted by $\ZF$, and $\ZFC$ denotes $\ZF+\AC$,
where $\AC$ is the Axiom of Choice. We sometimes
discuss $\ZF(\dot{A})$, where $\dot{A}$ is an extra predicate
symbol; this is just like $\ZF$, but in the expanded language
with both $\in$ and $\dot{A}$, and incorporates the
Collection and Separation schemata
for all formulas in the expanded language.
A model of $\ZF(\dot{A})$ has the form $(V,\in,A)$,
abbreviated $(V,A)$, where $V$ is the universe of sets
and $A\sub V$. Thus, $A$ is automatically a class of this model
(and in the interesting case, $A$ is not already definable from parameters
over $V$).

We write $\Sigma_0=\Pi_0=\Delta_0$ for the class
of formulas (in the language of set theory)
in which all quantifiers are \emph{bounded},
meaning of the form ``$\all x\in y$'' or ``$\exists x\in y$''.
Then $\Sigma_{n+1}$ formulas are those of the form
``$\exists x_1,\ldots,x_n\psi(x_1,\ldots,x_n,\vec{y})$''
where $\psi$ is $\Pi_n$,
and $\Pi_{n+1}$ formulas are negations of $\Sigma_{n+1}$.
A relation is $\Delta_{n+1}$ if expressed by both
$\Sigma_{n+1}$ and $\Pi_{n+1}$ formulas.

Given structures $M=(\univ{M},R_1,R_2,\ldots,R_n)$
and $N=(\univ{N},S_1,S_2,\ldots,S_n)$ for the same first order language 
$\mathscr{L}$,
with universes $\univ{M}$ and $\univ{N}$ respectively,
a map $\pi:M\to N$ (literally, $\pi:\univ{M}\to\univ{N}$) is \emph{elementary},
just in case
\begin{equation}\label{eqn:elem_emb} M\sats\varphi(\vec{x})\iff 
N\sats\varphi(\pi(\vec{x})) \end{equation}
for all first order formulas $\varphi$ of $\mathscr{L}$ and all finite tuples
$\vec{x}\in M^{<\om}$. We can refine this notion
by considering formulas of only a certain complexity:
We say $\pi$ is \emph{$\Sigma_n$-elementary}
iff line (\ref{eqn:elem_emb}) holds
for all $\vec{x}\in M^{<\om}$ and $\Sigma_n$ formulas $\varphi$.

An elementary substructure is of course
the special case of this in which $\pi$ is just the inclusion map.
We write $M\elem N$ for a fully elementary substructure,
and $M\elem_n N$ for $\Sigma_n$-elementary.

Given $X\sub M$ and $p\in M$, $X$ is \emph{definable over $M$ from the 
parameter $p$}
iff there is a formula $\varphi\in\mathscr{L}$  such that for all $x\in M$
(literally $x\in\univ{M}$), we have
\[ x\in X\iff M\sats\varphi(x,p).\]
This can also be refined to \emph{$\Sigma_n$-definable from $p$},
if we demand $\varphi$ be a $\Sigma_n$ formula,
and likewise for $\Pi_n$. We say that $X$ is \emph{definable
over $M$ without parameters} if we can take $p=\emptyset$.
We say $X$ is \emph{definable over $M$ from parameters}
if $X$ is definable over $M$ from some $p\in M$.

Recall that a set $M$ is
\begin{enumerate}[label=--]
 \item \emph{transitive} iff $\all x\in M\all y\in x[y\in M]$,
 \item \emph{extensional} iff $\all x,y\in M[x\neq y\implies\exists z\in M[z\in 
x\iff z\notin y]]$;
\end{enumerate}
note these notions are $\Delta_0$. The \emph{Mostowski
collapsing Theorem} asserts that if $M$ is a set
and $E$ a binary relation on $M$ which is wellfounded
and $(M,E)$ satisfies \emph{$E$-extensionality}
(that is, $\all x,y\in M[x\neq y\implies\exists z\in M[zEx\iff \neg 
zEy]]$), then there is a unique transitive set $\bar{M}$,
and unique map $\pi:\bar{M}\to M$,
such that $\pi$ is an isomorphism
\[ \pi:(\bar{M},\in)\to(M,E); \]
here $\bar{M}$ is called the \emph{Mostowski}
or \emph{transitive collapse} of $(M,E)$,
and $\pi$ the \emph{Mostowski uncollapse map}.
The most important example of transitive sets in this paper
are the segments $V_\alpha$ of the cumulative hierarchy.

A key fact for transitive sets is that of \emph{absoluteness}
with respect to $\Delta_0$ truth:
Let $M$ be transitive. Then $\Delta_0$ formulas are \emph{absolute}
to $M$, meaning that if $\psi$ is $\Delta_0$ and $\vec{x}\in M^{<\om}$, then
\[ \psi(\vec{x})\iff [M\sats\psi(\vec{x})]. \]
Here the blanket assertion ``$\psi(\vec{x})$'' on the left
implicitly means ``$V\sats\psi(\vec{x})$'' where $V$ is the ambient universe
in which we are working. This equivalence is proven by an induction on the
formula length. It follows that if $\psi$ is $\Delta_0$ then
\[ [M\sats\exists y\psi(y,\vec{x})]\implies [\exists y\psi(y,\vec{x})],\]
(in fact any witness $y\in M$ also works in $V$), so conversely,
\[ [\all y\psi(y,\vec{x})]\implies [M\sats\all y\psi(y,\vec{x})].\]

We write $\OR$ for the class of all ordinals.
Ordinals $\alpha,\beta$ are represented as sets in the standard
form: $0=\emptyset$, $\alpha+1=\alpha\cup\{\alpha\}$,
and we take unions at limit ordinals $\eta$.
The standard ordering on the ordinals is then
$\alpha<\beta\iff\alpha\in\beta$, and this ordering
is wellfounded.
Being an ordinal is a $\Delta_0$-definable property,
because
$x$ is an ordinal iff $x$ is transitive and (the elements of)
$x$ are linearly ordered by $\in$. Therefore being an ordinal
is absolute for transitive sets, and preserved by $\Sigma_0$-elementary
embeddings between transitive sets.
That is, if $M,N$ are transitive and $x\in M$ then
\[ x\text{ is an ordinal }\iff M\sats x\text{ is an ordinal}, \]
and if $j:M\to N$ is also $\Sigma_0$ elementary then
\[ M\sats x\text{ is an ordinal }\iff N\sats j(x)\text{ is an ordinal}.\]
So this will hold in particular for the elementary embeddings $j:V_\alpha\to 
V_\beta$
that we consider. Note that transitivity of sets is also a $\Delta_0$-definable 
property, so absolute. Note that if $M$ is a transitive
set then $\OR\inter M$ is also an ordinal, in fact the least ordinal not in $M$.

If $N$ is a model of $\ZF$ (possibly non-transitive), we write
\[ \OR^N=\{\alpha\in N: N\sats\text{``}\alpha\in\OR\text{''}\}.\]
Similarly, if $\alpha\in\OR^N$ we write
\[ V_\alpha^N=\text{ the unique }v\in N\text{ such that 
}N\sats\text{``}v=V_\alpha\text{''}.\]
We use analogous superscript-$N$ notation whenever
we have a notion defined using some theory $T$
and $N\sats T$. So superscript-$N$ means ``as computed/defined in/over $N$''.

Given a set $x$, the \emph{rank} of $x$, denoted $\rank(x)$, denotes the least 
ordinal
$\alpha$ such that $x\sub V_\alpha$. (The Axiom of Foundation
ensures that this is well-defined.)

Given a function $f:X\to Y$, $\dom(f)$ denotes the domain of $f$,
$\rg(f)$  the range, and given $A\sub X$,
$f[A]$ or $f``A$ denotes the pointwise image of $A$.

Let $j:V\to M$ be elementary, where $M\sub V$ and $j$ is non-identity.
An argument by contradiction can be used to show that there is
an ordinal $\kappa$ such that $j(\kappa)>\kappa$,
and the least such is called the \emph{critical point} of $j$,
denoted $\crit(j)$. The same holds more generally,
for example if $j:M\to M$ is elementary where $M$ is a transitive
set or class.
It follows that, in particular, such a $j$ cannot be surjective,
so there is no non-trivial
$\in$-isomorphism of transitive $M$.

If $j:M\to N$ is elementary between
transitive $M,N$, then $M\iso\rg(j)\elem N$,
and 
$\rg(j)$
is a wellfounded extensional set, and therefore the Mostowski
collapsing theorem applies to it. The transitive collapse
is just $M$, and $j$ is the uncollapse map.
So from $j$ we can compute $\rg(j)$ (and $M=\dom(j)$),
and from $\rg(j)$ we can recover $M,j$.

\section{Suzuki's Fact: Non-definability of $j:V\to V$}\label{sec:suzuki}

Suzuki \cite{suzuki_no_def_j} proved the following basic fact.
We will use variants of its proof later, and the 
proof is short,
so for expository purposes, we include it as a warm-up. Everything in this 
section
is well known.
\begin{fact}[Suzuki]\label{fact:suzuki_no_def_j}
Assume $\ZF$.\footnote{That is, we are assuming
that the universe $V\sats\ZF$. We often use this language
and then make statements which are to be interpreted in/over $V$.} Then no 
class $k$ which is definable from parameters is a non-trivial 
elementary embedding
 $k:V\to V$.
\end{fact}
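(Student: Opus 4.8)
The plan is to derive a contradiction from the assumption that some non-trivial elementary $k:V\to V$ is definable from a parameter $p$, using a reflection/diagonalization argument in the style of Kunen's inconsistency proof but phrased purely in terms of definability (no choice needed). First I would fix a formula $\varphi$ such that $k(x)=y\iff\varphi(p,x,y)$ holds in $V$ for all sets $x,y$; note that with this $\varphi$ and $p$ fixed, the graph of $k$ is a definable class, and one can speak inside $V$ of ``the embedding defined by $\varphi$ and $p$''. Set $\kappa_0=\crit(k)$, $\kappa_{n+1}=k(\kappa_n)$, and $\lambda=\sup_n\kappa_n$; these exist by the basic facts recalled in \S\ref{subsec:terminology}, and $k(\lambda)=\lambda$.

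The key step is to run the Kunen argument at $\lambda$. Pick (by elementarity, reflecting a suitable property down) an $\omega$-sequence or, more efficiently, an $\omega$-club subset $C\subseteq\lambda$ of order type $\lambda$ that is $k$-self-representing in a strong sense; concretely, the standard move is to let $f$ enumerate a subset of $\lambda$ witnessing the regressive/$\omega$-J\'onsson behaviour, and then consider $k``\lambda$ versus $\lambda$. Alternatively, and this is the cleanest route given what is available here, I would reflect the definability itself: since $k$ is definable from $p$ over $V$ and $k(\lambda)=\lambda$, the restriction $k\restriction V_\lambda$ is definable over $V$ from $p$, and by elementarity $k$ satisfies ``$V_\lambda\sats$ there is a non-trivial elementary embedding $V_{\lambda}\to V_\lambda$ definable from the parameter $k(p)$'' — but then pushing this all the way up, one obtains inside $V$ a definable (from $p$) non-trivial elementary $j:V_{\lambda+2}\to V_{\lambda+2}$, or more directly a definable $\omega$-J\'onsson function for $\lambda$, which is exactly the Kunen contradiction, and Kunen's combinatorial core (the existence of an $\omega$-J\'onsson function on every set, provable in $\ZF$) does not use $\AC$ once the relevant object is definable rather than merely chosen.

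Carrying this out: let $g:[\lambda]^\omega\to\lambda$ be the $\omega$-J\'onsson function given by the $\ZF$ theorem of Erd\H{o}s--Hajnal (this is where $\AC$ would ordinarily enter, but the function is obtained by a definable construction, so $\ZF$ suffices). Apply $k$ to $g$ to get $k(g)=g':[\lambda]^\omega\to\lambda$, and use that $k``\lambda$ is, by the critical-sequence structure, a proper subset of $\lambda$ of order type $\lambda$; feed a suitable $\omega$-subset of $k``\lambda$ into $g'$ and compare with $k$ of the corresponding value of $g$ to violate the J\'onsson property of $g'$ at the set $k``\lambda$. This yields the contradiction. The remaining steps — that $\lambda=\kappa_\omega(k)$ and $k(\lambda)=\lambda$, that $k\restriction V_\lambda:V_\lambda\to V_\lambda$ is elementary, and that $V_{\lambda+1}\not\subseteq\rg(k)$ in the definable setting — are all routine consequences of elementarity and the absoluteness facts already recorded.

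The main obstacle I expect is making precise the passage ``definability of $k$ over $V$ yields definability of the relevant reflected object inside $V_\lambda$ (or at $V_{\lambda+1}$)'' so that Kunen's combinatorics can be applied entirely within $\ZF$; one must be careful that the parameter witnessing definability, after applying $k$, still lands where it needs to, and that the $\omega$-J\'onsson function is genuinely available without choice. I would handle this by keeping everything at the level of the single fixed formula $\varphi$ and parameter $p$, observing that $\varphi,p$ together with $V_{\lambda+1}$ (a set) suffice to define $k\restriction V_{\lambda+1}$, and then invoking the $\ZF$-provable existence of $\omega$-J\'onsson functions on the \emph{set} $\lambda$, which is the form in which Erd\H{o}s--Hajnal's result is choice-free.
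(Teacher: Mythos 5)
There is a fatal gap at the heart of your argument: the claim that the Erd\H{o}s--Hajnal theorem (every infinite $\lambda$ carries an $\omega$-J\'onsson function $g:[\lambda]^\omega\to\lambda$) is provable in $\ZF$, ``because the function is obtained by a definable construction.'' This is false. The existence of $\omega$-J\'onsson functions uses $\AC$ essentially, and the definability of the embedding $k$ does nothing to supply the missing combinatorial object, which is constructed independently of $k$. Indeed, if your route worked, Kunen's inconsistency would go through in $\ZF$ for \emph{arbitrary} (not just definable) $j:V\to V$, since the J\'onsson-function argument never uses definability of $j$ anywhere else; this would contradict the entire premise of the surrounding subject --- the paper explicitly notes that whether $\ZF$ refutes $j:V\to V$ is open, and that $\ZF+{}$``$\exists j:V_{\lambda+2}\to V_{\lambda+2}$'' is consistent relative to $\ZFC+I_0$, which Kunen's combinatorics would immediately refute if available in $\ZF$. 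Your fallback route --- reflecting to get a definable elementary $j:V_{\lambda+2}\to V_{\lambda+2}$ --- also does not close the argument: $k\rest V_{\lambda+2}$ is definable over $V$ from $p$ and the set $V_{\lambda+2}$, but that is not the same as being definable \emph{over the structure $V_{\lambda+2}$ from a parameter in $V_{\lambda+2}$}, which is what the non-definability theorems for rank-to-rank embeddings rule out; and in any case those theorems are themselves proved by the very diagonalization you are missing, not by Kunen-style combinatorics.

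The actual proof is a pure definability diagonalization with no combinatorial input. Fix the formula $\varphi$ and for each parameter $q$ let $j_q$ be the relation defined by $\varphi(q,\cdot,\cdot)$; call $q$ \emph{bad} if $j_q:V\to V$ is a non-identity $\Sigma_1$-elementary map. Badness is definable \emph{without parameters} (restricting to $\Sigma_1$-elementarity keeps it first-order expressible), and by the $\ZF$ fact that cofinal $\Sigma_1$-elementary maps between models of $\ZF$ are fully elementary, every bad $q$ gives a fully elementary $j_q$. Now let $\kappa_0$ be the least $\crit(j_q)$ over all bad $q$; the singleton $\{\kappa_0\}$ is definable over $V$ with no parameters, hence any fully elementary $j_{q_0}$ must satisfy $j_{q_0}(\kappa_0)=\kappa_0$, while a witness $q_0$ to the minimum has $j_{q_0}(\kappa_0)>\kappa_0$. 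That is the contradiction, and it is the mechanism you should internalize for this family of results: minimize a critical point over a parameter-free definable family of candidate embeddings, observe the minimum is parameter-free definable and therefore fixed, and contradict that it must be moved.
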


Here when we say simply ``definable from parameters'',
we mean over $V$. Of course, the 
theorem is really a theorem scheme,
giving one statement for each possible formula $\varphi$
being used to define $k$ (from a parameter).
In order to give the proof, we need a couple of lemmas.
The  first is a little easier to consider
in the case that $\alpha$ in the proof is a limit ordinal,
but the proof goes through in general.

\begin{lem}\label{lem:Sigma_1-elem_pres_rank}
 Let $j:V_\delta\to V_\lambda$ be $\Sigma_1$-elementary.
 Then $j(V_\alpha)=V_{j(\alpha)}$ for all $\alpha<\delta$.
\end{lem}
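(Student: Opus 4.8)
The plan is to prove $j(V_\alpha) = V_{j(\alpha)}$ by transfinite induction on $\alpha < \delta$, using only $\Sigma_1$-elementarity together with $\Delta_0$-absoluteness for transitive sets. The key observation is that while the full statement ``$v = V_\alpha$'' is not itself $\Sigma_1$, the two inclusions $V_{j(\alpha)} \subseteq j(V_\alpha)$ and $j(V_\alpha) \subseteq V_{j(\alpha)}$ can each be extracted from manageable-complexity facts, and the induction hypothesis supplies exactly what is needed to bridge the gap.

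First I would record the basic facts: $j$ is $\Sigma_0$-elementary between transitive sets, hence preserves ordinals and their ordering (as noted in \S\ref{subsec:terminology}), so $j(\alpha)$ is an ordinal and $j$ is strictly increasing on $\OR \cap V_\delta$; in particular $j(\beta) < j(\alpha)$ whenever $\beta < \alpha < \delta$. Also ``$x \in V_\beta$'' is $\Delta_0$ in the parameters $x$ and $V_\beta$, and ``$y = \rank(x)$'' is likewise absolute, facts I would use freely. For the induction, fix $\alpha < \delta$ and assume $j(V_\beta) = V_{j(\beta)}$ for all $\beta < \alpha$.

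For the inclusion $j(V_\alpha) \subseteq V_{j(\alpha)}$: every element of $V_\alpha$ has rank $< \alpha$, i.e. lies in some $V_\beta$ with $\beta < \alpha$; this is witnessed inside $V_\delta$, and since $V_\delta \models$ ``every element of $V_\alpha$ lies in some $V_\beta$, $\beta < \alpha$'' (a statement of low complexity, or one can argue elementwise), applying $j$ gives that every element of $j(V_\alpha)$ lies in $V_{j(\beta)} = j(V_\beta)$ for some $j(\beta) < j(\alpha)$, hence in $V_{j(\alpha)}$. Concretely, for $x \in V_\alpha$ pick $\beta < \alpha$ with $x \in V_\beta$; then $j(x) \in j(V_\beta) = V_{j(\beta)} \subseteq V_{j(\alpha)}$ by the induction hypothesis and monotonicity. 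So $j(V_\alpha) \subseteq V_{j(\alpha)}$. For the reverse inclusion $V_{j(\alpha)} \subseteq j(V_\alpha)$: take $y \in V_{j(\alpha)}$, so $y$ has rank some $\bar\beta < j(\alpha)$. The point is that $V_{j(\alpha)} = \bigcup_{\bar\beta < j(\alpha)} V_{\bar\beta}$, and one wants to see $V_{\bar\beta}$ is in the range of $j$; but $\bar\beta$ need not be of the form $j(\beta)$. Here I would instead use that $V_\lambda \models$ ``$y \in V_{j(\alpha)}$'', which is $\Sigma_1$ (or $\Delta_0$ once $V_{j(\alpha)}$ is named as $j(V_\alpha)$'s would-be value) --- actually the clean route: the statement $\forall u\,[u \in V_\alpha \leftrightarrow u \subseteq V_{<\alpha}]$ and upward/downward transfer. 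Let me phrase it as: ``$\forall y\,[\rank(y) < \alpha \to y \in V_\alpha]$'' holds in $V_\delta$, is $\Pi_1$, so I cannot push it forward directly; instead I use that ``$V_\alpha$ is transitive and $\forall y \subseteq V_\alpha\, [y \in V_{\alpha+1}]$'' etc. The efficient argument: since $j(V_\alpha)$ is transitive (image of transitive set under $\in$-preserving map is transitive, as $V_\alpha$ transitive is $\Delta_0$ and preserved), every element of $j(V_\alpha)$ has rank $< \rank(j(V_\alpha)) \le j(\alpha)$; and conversely $j(\alpha) \le \rank(j(V_\alpha))$ because for each $\beta<\alpha$ we have $V_{j(\beta)} = j(V_\beta) \subseteq j(V_\alpha)$ so $j(\beta) \le \rank(j(V_\alpha))$, and $\sup_{\beta<\alpha} j(\beta)$ together with the successor step forces $\rank(j(V_\alpha)) \ge j(\alpha)$; moreover $j(V_\alpha)$ is downward closed under rank since $V_\delta \models$ ``$V_\alpha$ is closed under subsets of its elements'' transfers. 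Putting this together $j(V_\alpha)$ is a transitive set closed under subsets (power-set closed appropriately) with $\OR \cap j(V_\alpha) = j(\alpha)$, which uniquely identifies it as $V_{j(\alpha)}$.

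The main obstacle I anticipate is precisely the reverse inclusion $V_{j(\alpha)} \subseteq j(V_\alpha)$ and in particular the limit/successor bookkeeping for $\rank(j(V_\alpha))$: one must verify that $j(V_\alpha)$ really contains \emph{every} set of rank $< j(\alpha)$, not merely the images of sets in $V_\alpha$. The resolution is that $j(V_\alpha)$ satisfies the $\Delta_0$-expressible characterization ``$x \in j(V_\alpha) \leftrightarrow x \subseteq j(V_\alpha) \wedge \rank(x) < \OR\cap j(V_\alpha)$'' --- more carefully, that $j(V_\alpha)$ is transitive and $\in$-closed under the relevant operations --- which pins down $j(V_\alpha) = V_{\OR \cap j(V_\alpha)}$ by an absoluteness argument, and then separately one computes $\OR \cap j(V_\alpha) = j(\alpha)$ using monotonicity of $j$ on ordinals and the induction hypothesis. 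I would carry out the argument by: (1) preliminaries on ordinal preservation and $\Delta_0$-absoluteness; (2) the easy inclusion $j(V_\alpha) \subseteq V_{j(\alpha)}$ via the induction hypothesis; (3) showing $j(V_\alpha)$ is transitive and rank-downward-closed, hence equals $V_\gamma$ where $\gamma = \OR \cap j(V_\alpha)$; (4) showing $\gamma = j(\alpha)$, closing the induction. Limit stages of the outer induction are handled because $V_\alpha = \bigcup_{\beta<\alpha}V_\beta$ transfers under $j$ to $j(V_\alpha) = \bigcup_{\beta<\alpha} V_{j(\beta)}$, which has rank $\sup_{\beta<\alpha}j(\beta)$; this equals $j(\alpha)$ since $j$ is continuous at limits below $\delta$ --- or, if not assumed continuous, one still gets it is $V_{j(\alpha)}$ by the characterization in step (3)--(4) applied directly.
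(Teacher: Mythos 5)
Your overall strategy --- transfer a low-complexity characterization of ``being a rank initial segment'' along $j$ and then compute $\OR\cap j(V_\alpha)=j(\alpha)$ --- is the same as the paper's, but the characterization you actually transfer is too weak, and this is a genuine gap rather than a bookkeeping issue. The properties you propagate to $j(V_\alpha)$ are transitivity and closure under subsets of elements (``for every $X\in V_\alpha$ and every $Y\subseteq X$ we have $Y\in V_\alpha$''). A transitive set $M$ with this closure property and with $\OR\cap M=\gamma$ need \emph{not} equal $V_\gamma$: the set of hereditarily countable sets is transitive, closed under subsets of its elements, and meets $\OR$ exactly in $\omega_1$, yet it omits $\pow(\omega)$ and so is not $V_{\omega_1}$. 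Consequently the key assertion in your step (3) --- that $j(V_\alpha)$ is ``downward closed under rank since `closed under subsets of its elements' transfers'' --- is precisely the false implication: subset-closure never forces $\pow(X)$ to be an \emph{element}, which is what you need to climb through the ranks in the interval $[\sup_{\beta<\alpha}j(\beta),\,j(\alpha))$. That interval is nonempty in general, because your fallback claim that $j$ is continuous at limits below $\delta$ is also false (for instance $\sup j``\kappa=\kappa<j(\kappa)$ at $\kappa=\crit(j)$ whenever the critical point is a limit ordinal, as it typically is); so at limit $\alpha$ the induction hypothesis only delivers $V_{\sup_{\beta<\alpha}j(\beta)}\subseteq j(V_\alpha)$ and cannot reach $j(\alpha)$.

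The missing ingredient is a third transferred property asserting that the cumulative hierarchy is \emph{internally} total: the paper uses ``$V_\alpha$ satisfies `for every ordinal $\beta$, $V_\beta$ exists'\,'', which is $\Delta_1$ and hence preserved by a $\Sigma_1$-elementary map; combined with transitivity and subset-closure this really does force $j(V_\alpha)=V_{\OR\cap j(V_\alpha)}$, via an internal induction showing $V_{\bar\beta}^{j(V_\alpha)}=V_{\bar\beta}\in j(V_\alpha)$ for every $\bar\beta<\OR\cap j(V_\alpha)$. (A repair closer to your phrasing would be to transfer the $\Pi_1$ statement ``$\forall x\,[x\subseteq V_\alpha\wedge\rank(x)\in V_\alpha\rightarrow x\in V_\alpha]$'', using that $\rank$ is $\Delta_1$ here; but some such extra clause must be named explicitly --- it does not follow from what you wrote.) Two smaller points: your ``concrete'' argument for $j(V_\alpha)\subseteq V_{j(\alpha)}$ only controls $j``V_\alpha$, not $j(V_\alpha)$, whose elements need not be of the form $j(x)$; and once the correct characterization is in place the transfinite induction is unnecessary --- the paper's argument is direct for each $\alpha$, finishing with $j(\alpha)=j(\OR\cap V_\alpha)=\OR\cap j(V_\alpha)$.
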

\begin{proof}
Fix $\alpha<\delta$.
Note that $V_\delta$ satisfies the following statements about the parameters 
$\alpha$ and $V_\alpha$:\footnote{When we write ``$V_\alpha$'' in the 
3 statements,
we refer to the object $x=V_\alpha$ as a parameter,
as opposed to the object defined as the $\alpha$th stage
of the cumulative hierarchy. But note that 
the ``$\beta$'' and ``$V_\beta$'' are quantified variables,
and here $V_\beta$ \emph{does} refer to the $\beta$th
stage of the cumulative hierarchy.}
\begin{enumerate}[label=--]
\item ``$V_\alpha$ is transitive''
 \item ``For every $X\in V_\alpha$
and every $Y\sub X$, we have $Y\in V_\alpha$'',
\item ``$V_\alpha$ satisfies `For every ordinal $\beta$, $V_\beta$ 
exists'.''.\footnote{The reader might notice that this needs to be formulated 
appropriately, because if $\alpha=\beta+1$, then the standard definition
of $\left<V_\gamma\right>_{\gamma\leq\beta}$ is the function
$f:\beta+1\to V$ where $f(\gamma)=V_\gamma$, and $f\notin V_\alpha$.
But it is straightforward to reformulate things appropriately.
For the case in which $j:V_\delta\to V_\delta$ and $\delta$ is a limit,
one can also get around these things in other ways,
since we can just talk about elements of $V_\delta$, instead of
literally talking about something that $V_\alpha$ satisfies.}
\end{enumerate}
The first statement here is $\Sigma_0$
(in parameter $V_\alpha$),
the second is $\Pi_1$,
and the third $\Delta_1$, so $V_\lambda$ satisfies
the same assertions of the parameter $j(V_\alpha)$.
It follows that $j(V_\alpha)=V_\beta$ for some $\beta<\lambda$. But also 
$\alpha=V_\alpha\inter\OR$, another fact preserved by $j$
(again by $\Sigma_1$-elementarity),
so $j(\alpha)=j(V_\alpha)\inter\OR$, so $\beta=j(\alpha)$. 
\end{proof}

The following fact is  \cite[Proposition 5.1]{kanamori}
(though it is stated under the assumption that $M,N$
are transitive proper class inner models there).
We will need to prove analogues later (and we will
actually appeal to a relativization of it to models of $\ZF(\dot{A})$,
the proof of which we leave it to the reader to fill in). So let 
us look at the proof, also as a warm-up:
\begin{fact}\label{fact:ZF_Sigma_1-elem_cofinal_implies_full_elem}
 Let $M,N$ be models of $\ZF$.
 Let $j:M\to N$ be 
$\Sigma_1$-elementary and $\in$-cofinal. Then
 $j$ is fully elementary.
\end{fact}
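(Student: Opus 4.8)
The plan is a routine induction on the Lévy complexity of formulas, the only step with real content being the existential‑quantifier step in the ``hard'' direction, where $\in$‑cofinality enters. First I would record two standard preliminaries. (i) Since $\Sigma_1$‑elementarity is stated as a biconditional, it automatically yields $\Pi_1$‑elementarity (negate both sides of line~(\ref{eqn:elem_emb})), and more generally $\Sigma_n$‑elementarity and $\Pi_n$‑elementarity are equivalent for each $n$; so it suffices to prove $j$ is $\Sigma_n$‑elementary for all $n$, carrying ``$\Sigma_n$‑ and $\Pi_n$‑elementary'' as the induction hypothesis. (ii) Provably in $\ZF$, for $n\geq 1$ the classes $\Sigma_n$ and $\Pi_n$ are closed, up to provable equivalence, under bounded quantification: if $\psi$ is $\Pi_n$ then $\exists y\in z\,\psi$ and $\forall y\in z\,\psi$ are each $\Pi_n$, and dually for $\Sigma_n$ — the case of a bounded universal in front of a $\Sigma_n$ matrix being exactly where the Collection schema is invoked. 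Also, every $\Sigma_{n+1}$ formula is $\ZF$‑provably of the form $\exists y\,\psi$ with $\psi\in\Pi_n$.

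The base case $n=1$ (and trivially $n=0$) is the hypothesis. For the inductive step, assume $j$ is $\Sigma_n$‑ and $\Pi_n$‑elementary, $n\geq 1$, let $\varphi(\vec x)\equiv\exists y\,\psi(y,\vec x)$ with $\psi\in\Pi_n$, and fix $\vec a\in M$. For the forward direction: if $M\sats\varphi(\vec a)$, pick $b\in M$ with $M\sats\psi(b,\vec a)$; by $\Pi_n$‑elementarity $N\sats\psi(j(b),j(\vec a))$, so $N\sats\varphi(j(\vec a))$ — no cofinality needed. For the backward direction: suppose $N\sats\varphi(j(\vec a))$, witnessed by some $c\in N$; by $\in$‑cofinality choose $a\in M$ with $c\in_N j(a)$, so $N\sats\exists y\in j(a)\,\psi(y,j(\vec a))$. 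Now $\exists y\in z\,\psi(y,\vec x)$ is, provably in $\ZF$, equivalent to a $\Pi_n$ formula $\chi(z,\vec x)$; since $N\sats\ZF$ we get $N\sats\chi(j(a),j(\vec a))$, and by $\Pi_n$‑elementarity $M\sats\chi(a,\vec a)$, hence, using $M\sats\ZF$, $M\sats\exists y\in a\,\psi(y,\vec a)$, so $M\sats\varphi(\vec a)$. This gives $\Sigma_{n+1}$‑elementarity, and then $\Pi_{n+1}$‑elementarity by negation, completing the induction. (A block $\exists y_1\cdots\exists y_k$ is handled identically: finitely many witnesses $c_1,\dots,c_k\in N$ can be trapped in a single $j(a)$, taking $a=\bigcup_i a_i\in M$ and using that $j$ preserves $\subseteq$ by $\Sigma_0$‑elementarity; I would not belabour this.)

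The one step demanding care — and the reason the hypotheses are what they are — is the appeal to the $\ZF$‑provable equivalence of $\exists y\in z\,\psi$ with a $\Pi_n$ formula: via the dual, this rests on the Collection schema applied to the $\Sigma_n$ formula $\neg\psi$, which is why ``$\ZF$'' rather than mere Zermelo set theory is assumed of $M$ and $N$. The role of $\in$‑cofinality is equally essential and equally localized: it is exactly what converts an arbitrary (possibly ``unboundedly high'') witness $c\in N$ into a bounded existential $\exists y\in j(a)$ that the induction hypothesis can digest. So I expect the bookkeeping around bounded‑quantifier absorption — rather than any conceptual difficulty — to be the main thing to get right, and it is standard (cf.\ the Lévy hierarchy being closed under bounded quantification over $\ZF$).
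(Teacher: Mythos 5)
Your proof is correct, but it takes a different route from the paper's. The paper also inducts on $n$, but handles the backward existential step via reflection: it introduces the definable class $C_n$ of ordinals $\alpha$ with $V_\alpha^M\elem_n M$ (unbounded by the Reflection Theorem in $\ZF$), shows $j$ sends $C_n$ into the corresponding class $C_n^N$ of $N$, and then uses $\in$-cofinality to trap the witness inside some $V_{j(\alpha)}^N$ with $\alpha\in C_n$, pulling the bounded statement back through $j$ and letting $V_\alpha^M\elem_n M$ finish the job. You instead trap the witness inside $j(a)$ for a single $a\in M$ and absorb the resulting bounded existential into the $\Pi_n$ class via the Lévy-hierarchy closure under bounded quantification, which is where Collection enters for you. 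Both arguments are standard and correct; yours is more local and isolates the use of Collection precisely, and in particular it goes through for models of theories without Foundation or a $V_\alpha$-hierarchy (e.g.\ $\ZF^-$), whereas the paper's proof leans on the cumulative hierarchy and reflection — machinery that is in any case ambient throughout the paper and that the authors reuse when relativizing the fact to models of $\ZF(\dot{A})$ later on. Your observation that $\Sigma_n$- and $\Pi_n$-elementarity coincide under the paper's biconditional formulation, and your handling of a block of existentials by taking a finite union of bounding sets, are both fine.
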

\begin{proof}
 We prove by induction on $n<\om$, that $j$ is $\Sigma_n$-elementary.
 
 Because $j$ is $\Sigma_1$-elementary,
 we have $j(V_\alpha)=V_{j(\alpha)}$ for each $\alpha$.
 
 Suppose $j$ is $\Sigma_n$-elementary where $n\geq 1$.
 Let $C_n\sub\OR^M$ be the $M$-class of all $\alpha$
 such that $V^M_\alpha\elem_n V^M_\lambda$  (note that $C_n$ is as defined
 over 
$M$,
 without parameters). $\ZF$ proves (via standard
 model theoretic methods) that $C_n$ is unbounded in $\OR$.
 
 Let $\alpha\in C_n$.
We claim that $j(\alpha)\in C^N_n$ (with $C_n^N$ defined analogously over 
$N$; see \S\ref{subsec:terminology}). For suppose
$N\sats\varphi(x)$ where $x\in V_{j(\alpha)}^N$
and $\varphi$ is $\Sigma_n$, but that $V_{j(\alpha)}^N\sats\neg\varphi(x)$.
The existence of such an $x$ is a $\Sigma_n$ assertion about the 
parameter
$V_{j(\alpha)}^N$, satisfied by $N$,
so $M$ satisfies the same about $V_\alpha^M$ (by $\Sigma_n$-elementarity of 
$j$). But $\alpha\in C_n$, contradiction.

Now suppose that $N\sats\varphi(j(x))$, where $\varphi$ is $\Sigma_{n+1}$.
Then by the $\in$-cofinality of $j$
and the previous remarks, we may pick $\alpha\in C_n$ such that
$x\in V_\alpha^M$ and $V_{j(\alpha)}^N\sats\varphi(j(x))$.
But then $V_\alpha^M\sats\varphi(x)$,
and since $\alpha\in C_n$, it follows that $M\sats\varphi(x)$, as desired.
\end{proof}

\begin{proof}[Proof of \ref{fact:suzuki_no_def_j}] Suppose that $k:V\to V$ is 
elementary
and there is a $\Sigma_n$ formula $\varphi$ and
$p\in V$ such that for all $x,y$, we have
\[ k(x)=y\iff \varphi(p,x,y).\]
Given any parameter $q$, attempt to define a function $j_q$ by:
\[ j_q(x)=y\iff\varphi(q,x,y).\]
Say that $q$ is \emph{bad} iff $j_q:V\to V$
is a $\Sigma_1$-elementary, non-identity map.
Because $j_q$ is defined
using the fixed formula $\varphi$ and we only demand $\Sigma_1$-elementarity,
\emph{badness} is a definable notion (without parameters).
And $p$ above is bad.

By Fact
\ref{fact:ZF_Sigma_1-elem_cofinal_implies_full_elem} above,
if $q$ is bad then $j_q$ is in fact fully elementary.

Now let $\kappa_0$ be the least critical point
$\crit(j_q)$ among all bad parameters $q$.
Note then that the singleton $\{\kappa_0\}$ is definable over $V$,
from no parameters. (So there is a formula $\psi$
such that $\psi(x)\iff x=\kappa_0$, for all sets $x$.)

Let $q_0$ witness the choice of $\kappa_0$.
As mentioned above, $j_{q_0}$ is in fact fully elementary,
and we have $\crit(j_{q_0})=\kappa_0$.
So $j_{q_0}(\kappa_0)>\kappa_0$,
whereas $j_{q_0}(\alpha)=\alpha$ for all $\alpha<\kappa_0$.
Since $j_{q_0}$ is order-preserving, $\kappa_0\notin\rg(j_{q_0})$.
But by the (full) elementarity of $j_{q_0}:V\to V$
and definability of $\{\kappa_0\}$,
we must have $j_{q_0}(\kappa_0)=\kappa_0\in\rg(j_{q_0})$,
a contradiction.
\end{proof}

We remark that Suzuki actually proved a more general theorem,
considering elementary embeddings of the form $j:M\to V$
where $M\sub V$, and $j$ is definable from parameters.

\section{Definability of rank-to-rank embeddings}\label{sec:def_rank-to-rank}

\subsection{The limit case}\label{LimitSection}
Most investigations of rank-to-rank embeddings to date have focused on 
elementary embeddings \(j : V_\alpha\to V_\alpha\) where
\(\alpha = \kappa_\omega(j)\) or \(\alpha = \kappa_\omega(j) + 1\), 
since assuming Choice, these are the only 
rank-to-rank embeddings there are. The following very simple 
fact turns out to 
play a central role in these investigations: if \(\lambda\) is a limit ordinal, 
an elementary embedding from \(V_\lambda\) to \(V_\lambda\) extends in at most 
one way to an elementary embedding from \(V_{\lambda+1}\) to \(V_{\lambda+1}\).

\begin{dfn}
 For a structure $M$, $\mathscr{E}(M)$ denotes the set of all elementary
 embeddings $j:M\to M$.
\end{dfn}

\begin{dfn}\label{CanonicalExtension}
Let \(\lambda\) be a limit ordinal and $j\in \mathscr{E}(V_\lambda)$.
The {\it canonical extension of \(j\)} is the function
$j^+ : V_{\lambda+1}\to V_{\lambda+1}$
defined 
$j^+(X) = \bigcup_{\alpha < \lambda} j(X\cap V_\alpha)$.
\end{dfn}
The canonical extension $j^+$ is a function $V_{\lambda+1}\to 
V_{\lambda+1}$.
However, it is well known that it can fail to be elementary. (For 
example, let $\kappa$ be least
such that there is an elementary $j:V_\lambda\to V_\lambda$
with $\crit(j)=\kappa$,
and show that $j^+$ is not elementary.)
But if \(j\) 
{\it does} extend to some
\(i\in\mathscr{E}(V_{\lambda+1})\), or even just to a $\Sigma_1$-elementary 
$i:V_{\lambda+1}\to V_{\lambda+1}$, then clearly 
\(i(V_\lambda)=V_\lambda\) and \(i = j^+\).

Let $\lambda$ be a limit ordinal.
It follows that every \(j\in\mathscr{E}(V_{\lambda+1})\)
is definable over \(V_{\lambda+1}\) from 
parameters, in fact, from its own restriction \(j\rest 
V_\lambda\). (Since \(V_\lambda\) is closed under 
ordered 
pairs, \(j\restriction V_\lambda\in
V_{\lambda+1}\).) However,
$j$ is not definable over $V_{\lambda+1}$
from any element of $V_\lambda$, and no
\(j\in\Eee(V_\lambda)\) is definable from parameters over 
$V_\lambda$:
\begin{tm}\label{LimitUndefinable}
 Let $\delta$ be an ordinal, $j\in\Eee(V_\delta)$
and  $p\in V_\delta$,
with $j$  definable over $V_\delta$ from the parameter $p$.
 Then $\delta=\beta+1$ is a successor and $\rank(p)=\beta$.
\end{tm}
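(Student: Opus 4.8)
The statement to prove is Theorem \ref{LimitUndefinable}: if $j\in\Eee(V_\delta)$ is definable over $V_\delta$ from a parameter $p\in V_\delta$, then $\delta$ must be a successor ordinal $\beta+1$ and $\rank(p)=\beta$. I would split this into two assertions: first, that $\delta$ cannot be a limit ordinal; second, that if $\delta=\beta+1$ then $\rank(p)$ must equal $\beta$ exactly (it cannot be strictly less). For the limit case, I would argue essentially by replaying Suzuki's argument (Fact \ref{fact:suzuki_no_def_j}) inside $V_\delta$: since $\delta$ is a limit, $V_\delta\sats\ZF$, and any $j\in\Eee(V_\delta)$ is $\in$-cofinal (an elementary embedding of $V_\delta$ for limit $\delta$ moves no rank cofinally, but actually I should check cofinality directly — for limit $\delta$, given any $x\in V_\delta$, $x\in V_\alpha$ for some $\alpha<\delta$, and $j(x)\in j(V_\alpha)=V_{j(\alpha)}$, with $j(\alpha)<\delta$, so $j$ is $\in$-cofinal into $V_\delta$). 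Then the Suzuki machinery applies verbatim: from the defining formula $\varphi$ and parameter $p$, define "badness" of a parameter $q$ as "$j_q$ is a nontrivial $\Sigma_1$-elementary self-map of $V_\delta$", note this is parameter-free definable over $V_\delta$, let $\kappa_0$ be the least critical point among bad parameters, so $\{\kappa_0\}$ is parameter-free definable over $V_\delta$, and derive the contradiction that $\kappa_0$ must be fixed yet is not in the range of the witnessing embedding. Fact \ref{fact:ZF_Sigma_1-elem_cofinal_implies_full_elem} is what upgrades $\Sigma_1$-elementarity plus $\in$-cofinality to full elementarity here.

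**The successor case.** Now suppose $\delta=\beta+1$, $j\in\Eee(V_{\beta+1})$ nontrivial, and $j$ is definable over $V_{\beta+1}$ from $p\in V_{\beta+1}$; I must show $\rank(p)=\beta$, equivalently $p\notin V_\beta$. Suppose toward a contradiction that $p\in V_\beta$. The idea is again to run a Suzuki-style minimization, but now relativized so that the "bad" parameters are required to lie in $V_\beta$ and $j_q$ ranges over (candidates for) $\Sigma_1$-elementary self-maps of $V_{\beta+1}$ definable via $\varphi$. First I would note that if $\beta$ is a limit, $j$ restricts to $j\rest V_\beta\in\Eee(V_\beta)$, which is definable over $V_{\beta+1}$ from $p\in V_\beta$; but one should be careful — the theorem as stated only forbids definability over $V_\delta$ itself, so the reduction isn't immediate, and I'd rather argue uniformly. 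The cleaner route: since $p\in V_\beta$ and the defining formula $\varphi$ is fixed, the property "$q\in V_\beta$ and $j_q:V_{\beta+1}\to V_{\beta+1}$ is nontrivial and $\Sigma_1$-elementary" is definable over $V_{\beta+1}$ without parameters, and $p$ has this property. Among all such $q$, let $\kappa_0$ be the least ordinal of the form $\crit(j_q)$ (note every $\Sigma_1$-elementary nontrivial self-map of $V_{\beta+1}$ has a critical point, by the standard argument, which holds for transitive-set self-maps). Then $\{\kappa_0\}$ is definable over $V_{\beta+1}$ without parameters. Let $q_0\in V_\beta$ witness this; so $\crit(j_{q_0})=\kappa_0$, and $j_{q_0}(\kappa_0)>\kappa_0$ while $j_{q_0}\rest\kappa_0=\id$.

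**Getting the contradiction and the main obstacle.** Now I want to say: by elementarity of $j_{q_0}$ (is it fully elementary? Fact \ref{fact:ZF_Sigma_1-elem_cofinal_implies_full_elem} needs $\in$-cofinality, which fails for self-maps of successor $V_{\beta+1}$ — a self-map of $V_{\beta+1}$ need not be $\in$-cofinal, indeed it fixes $V_\beta$-rank or below and $V_\beta$ itself is the top-rank element moved possibly not cofinally), I only get $\Sigma_1$-elementarity for free, so I must be careful which statements I push through. The point is that "$x=\kappa_0$" is definable over $V_{\beta+1}$, and whether this definition is $\Sigma_1$ or higher matters. The standard fix (as in Suzuki's own proof and as the authors surely intend) is that since $j_{q_0}$ is $\Sigma_1$-elementary between the transitive set $V_{\beta+1}$ and itself, and ordinals and the satisfaction of the defining formula for badness are preserved appropriately, we can arrange the minimization so that $j_{q_0}(\kappa_0)=\kappa_0$, contradicting $\crit(j_{q_0})=\kappa_0$. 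The delicate part — and the step I expect to be the real obstacle — is verifying that the "badness" predicate and the definition of $\kappa_0$ are preserved by $j_{q_0}$ despite only having $\Sigma_1$-elementarity rather than full elementarity; this likely requires observing that $j_{q_0}$ is in fact fully elementary after all (perhaps because $V_{\beta+1}$, as a set model, satisfies enough of $\ZF$-like closure, or because one restricts attention to $j_{q_0}\rest V_\beta$ when $\beta$ is a limit and handles $\beta$ a successor separately by a different rank-counting argument tied to the parity phenomenon of Theorem \ref{tm:cumulative_periodicity}), or else it requires replacing "$\Sigma_1$-elementary" in the definition of badness by "$\Sigma_k$-elementary" for the appropriate $k$ matching the complexity of $\varphi$ and of the definition of $\kappa_0$, and checking the self-referential bookkeeping closes. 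I would handle the limit-$\beta$ subcase by the direct restriction-and-Suzuki argument on $V_\beta$, and the successor-$\beta$ subcase, i.e. $\delta=\beta+1$ with $\beta$ itself a successor, by noting $j\rest V_\beta\in\Eee(V_\beta)$ is then definable over $V_{\beta+1}$ from $p$ with $\rank(p)<\beta$, and recursively reducing, with the base case handled by the impossibility of a nontrivial elementary self-map of some small $V_n$ — but I suspect the authors' actual proof instead packages everything into one Suzuki-style minimization with a carefully chosen complexity bound, which is the cleanest and which is the step I would spend the most care on.
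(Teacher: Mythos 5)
Your overall strategy (a Suzuki-style minimization over parameters) is the right family of argument, and you correctly locate the difficulty: the embedding witnessing the minimal critical point is a priori only partially elementary, so it is not clear that it preserves the definition of that critical point. But neither of your proposed resolutions works, and the device that actually closes the argument is missing. In the limit case you invoke Fact \ref{fact:ZF_Sigma_1-elem_cofinal_implies_full_elem} to upgrade $\Sigma_1$-elementary self-maps of $V_\delta$ to fully elementary ones; that Fact requires its models to satisfy $\ZF$, and $V_\delta\sats\ZF$ is false for a general limit $\delta$ (already $V_{\omega+\omega}$ fails Replacement, and the classes $C_n$ of reflecting levels used in the proof of that Fact can be bounded in $\delta$). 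Worse, the upgrade itself is false in general: the Lietz--Schlutzenberg theorem at the end of \S\ref{sec:limit_Sigma_1_elementarity} produces $\in$-cofinal $\Sigma_1$-elementary $k:V_\delta\to V_\delta$, $\delta$ a limit, which are not $\Sigma_2$-elementary. So your $\kappa_0$, minimized over $\Sigma_1$-bad parameters, may only be witnessed by a $\Sigma_1$-elementary $j_{q_0}$, while ``$x=\kappa_0$'' is at least $\Pi_2$ over $V_\delta$, and you cannot conclude $j_{q_0}(\kappa_0)=\kappa_0$. In the successor case you candidly flag the same obstacle, but the fixes you float do not go through: $V_{\beta+1}$ has no relevant $\ZF$-like closure, and the ``$\Sigma_k$ bookkeeping'' never closes, because defining the least critical point over the $\Sigma_k$-elementary $j_q$'s costs strictly more than $\Sigma_k$.

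The paper's proof avoids defining the minimization over $V_\delta$ altogether, and this is the idea you are missing. The hypothesis to be refuted yields some $\beta<\delta$ with $p\in V_\beta$ (this covers the limit case and the low-rank-parameter successor case uniformly, so no case split is needed). Hence the collection $A$ of all $\omega$-bad parameters $q\in V_\beta$ (those with $j_q\in\Eee(V_\delta)$) is a subset of $V_\beta$ and therefore an \emph{element} of $V_\delta$; it need not be definable over $V_\delta$, and that is fine. One takes $\mu_0$ least among $\crit(j_q)$ for $q\in A$ (a minimization carried out in $V$, not inside $V_\delta$), fixes a witness $p_0$, and applies the given fully elementary $j=j_{p_0}$ to $A$ itself: writing $A=\bigcap_{n<\omega}A_n$ with $A_n$ the set of $n$-bad parameters (each $A_n$ definable over $V_\delta$ from $\beta$, so $j(A_n)\cap V_\beta=A_n$), and coding $\left<A_n\right>_{n<\omega}$ by a single set, one gets $j(A)=\bigcap_{n<\omega}j(A_n)$ and hence $p_0\in j(A)$. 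Then $V_\delta\sats$``$\exists q\in j(A)$ with $\crit(j_q)<j(\mu_0)$'', witnessed by $p_0$; pulling this single first-order statement back along $j$ yields some $q\in A$ (hence $\omega$-bad) with $\crit(j_q)<\mu_0$, contradicting minimality. No definability of $\mu_0$ over $V_\delta$, and no upgrading of partial elementarity, is needed anywhere.
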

\begin{proof}
Suppose not. We adapt the proof of Suzuki's Fact.
Fix $(k,\varphi,\beta)$ such that  $k<\om$, $\varphi$ is a $\Sigma_k$ 
formula and
 $\beta<\delta$, and
for some $p\in V_\beta$ we have
 $j_p\in\Eee(V_\delta)$ where
 \[ j_p=\{(x,y)\in V_\delta\cross V_\delta: 
V_\delta\sats\varphi(p,x,y)\}.\]
Say that $q\in V_\beta$ is \emph{$\om$-bad} if $j_q\in\Eee(V_\delta)$.

Let $\mu_0$ be the least critical point
among all such (fully elementary) embeddings $j_q$ (minimizing
over all $\om$-bad parameters $q$).
Let $p_0\in V_\beta$ witness this, so 
$j_{p_0}\in\mathscr{E}(V_\delta)$ and $\crit(j_{p_0})=\mu_0$.

For $n<\om$, say that $q\in V_\beta$ is
\emph{$n$-bad} iff
$j_q:V_\delta\to V_\delta\text{ and is }\Sigma_n\text{-elementary}$.
Let
$A_n=\{q\in V_\beta: q\text{ is }n\text{-bad}\}$.
So $A_n\in V_\delta$ and note that $A_n$ is definable over $V_\delta$ from the 
parameter $\beta$.

Since $j=j_{p_0}$ is fully elementary, $j(A_n)\inter V_\beta=A_n$
(note $j(\beta)\geq\beta$).
Let $A=\bigcap_{n<\om}A_n$, so $A\in V_\delta$.
Note that $j_q\in\Eee(V_\delta)$ for every $q\in A$.

The sequence $\left<A_n\right>_{n<\om}$ can easily be coded
by a set in $V_\delta$ (with methods like in the next section; if $\delta$ is a 
limit then it is in fact literally in $V_\delta$), 
and therefore
\[ j(A)=\bigcap_{n<\om}j(A_n), \]
so $p_0\in j(A)$.
Therefore $V_\delta\sats$``$\exists q\in j(A)$
such that $\crit(j_q)<j(\mu_0)$'' (as witnessed by $p_0$).
Pulling this back with the elementarity of $j$,
$V_\delta\sats$``$\exists q\in A$ such that
$\crit(j_q)<\mu_0$.'' But this contradicts the minimality of $\mu_0$.
\end{proof}

\subsection{A flat pairing function}\label{subsec:coding}
If $\delta$ is a limit ordinal then $V_\delta$ is closed under pairs $\{x,y\}$,
and hence, ordered pairs $(x,y)$, represented in the standard fashion as 
$(x,y)=\{\{x\},\{x,y\}\}$.
But this fails in the successor case, at least when we use this standard 
representation: For example, $V_\delta\in V_{\delta+1}$ but the pair 
$\{V_\delta,\emptyset\}\notin V_{\delta+1}$. 
It is therefore useful to employ a different representation or \emph{coding} of 
ordered pairs with the property that for every infinite ordinal $\alpha$, for all $x,y\in 
V_\alpha$, the code \(\lceil x,y\rceil\) for the pair $(x,y)$ is an element
of $V_\alpha$. In this case, the function
\((x,y)\mapsto \lceil x,y\rceil\) is called a {\it flat pairing function}.

There are many different flat pairing functions,
and which one we use will not really be relevant in our applications.
All we will really require of the pairing function is that it be a \(\Sigma_0\)-definable
injection \(\Phi : V\times V\to V\) such that \(\Phi``(V_\alpha\times V_\alpha)\subseteq V_\alpha\)
for all infinite ordinals \(\alpha\).

Nevertheless, let us define the {\it Quine-Rosser pairing function}, which
is officially the pairing function we employ below.
The basic idea is to code a pair \((x,y)\) by a
labeled disjoint union of \(x\) and \(y\).
Somewhat more precisely, we will take two disjoint copies
$V_0$ and $V_1$ of the universe $V$
and  
bijections $f_0 : V\to V_0$ and $f_1: V\to V_1$,
which are both rank-preserving over all sets of rank $\geq\om$.
The ordered pair $(x,y)$ is then coded by the set 
$\lceil x,y\rceil = f_0``x\cup f_1``y$.

To implement this idea without leaving \(V\),
let $V_0$ be the class of sets that do not contain the empty set
and let $V_1$ be the class of sets that do.
Let $s : V\to V$ be defined by setting
\(s(n) = n+1\) for all \(n < \omega\) and \(s(u) = u\) for all \(u\notin 
\omega\). Then let $f_0: V\to V_0$ be defined by $f_0(X)=s``X$
and  \(f_1 : V\to V_1\) be defined by
\(f_1(u) = (s``X)\cup\{\emptyset\}$.

\begin{dfn}
  For sets $x,y\in V$, let 
\[\lceil x,y\rceil = f_0``x\cup f_1``y,\]
where \(f_0\) and \(f_1\) are as defined above.
The set $\lceil x,y\rceil$ is the {\it Quine-Rosser pair}
coding $(x,y)$.
\end{dfn} 
The Quine-Rosser pairing function \((x,y)\mapsto \lceil x,y\rceil\)
establishes a bijection from \(V\times V\) to \(V\) whose inverse is the function
\(z\mapsto (f_0^{-1}[z],f_1^{-1}[z])\).
It is easy to show that for any set \(u\),
\(\rank(f_0(u))\) and \(\rank(f_1(u))\)
are bounded by \(1+\text{rank}(u)\), which implies
\[\rank(\lceil x,y\rceil ) \leq 1 + \max \{\rank(x),\rank(y)\}.\]
In particular, for any infinite ordinal \(\alpha\),
the Quine-Rosser pairing function restricts to a bijection from 
\(V_{\alpha}\times V_\alpha\)
to \(V_\alpha\).
Moreover, this function is
$\Sigma_0$-definable over the structure $(V_\alpha,\in)$.\footnote{
Of course we can't presuppose another notion of pairing
when we discuss the definability. So to be clear,
the definability means
there is a $\Sigma_0$ formula $\varphi$ of 3 variables such
that $\lceil x,y\rceil =z\iff \varphi(x,y,z)$.}

From now on, we shift notation, and whenever we talk about ordered pairs,
we mean Quine-Rosser pairs, and whenever we talk about
binary relations $R$ on $V_\alpha$ (where $\alpha\geq\om$)
we will literally mean that $R$ is a set of Quine-Rosser pairs,
and similarly for $n$-ary relations. Therefore 
 $R\in V_{\alpha+1}$. Moreover, note that there is a $\Sigma_0$ formula
in the language of set theory such that for any such
$\alpha$ and binary relation $R$ on $V_\alpha$ and $x,y\in V_\alpha$,
we have $xRy$ iff $V_{\alpha+1}\sats\varphi(R,x,y)$. This will be used
in particular for functions $f:V_\alpha\to V_\alpha$.

\subsection{The successor case}
Our observations so far suggest the following natural 
questions. Suppose \(\xi\) is an ordinal and
$j\in\Eee(V_{\xi+2})$.
Can $j$ be definable over $V_{\xi+2}$ from some parameter
(necessarily of rank $\xi+1$)?
And if $n\geq 1$, then more specifically, can $j$ be definable over 
$V_{\xi+2}$ from $j\rest V_{\xi+1}$? Note here that, because
we are using Quine-Rosser pairs, $j\rest V_{\xi+1}\in V_{\xi+2}$.
In this section, we answer these questions.
It turns out that most of the results from the limit case generalize
to the case of arbitrary even ordinals.

At first glance, it seems that the definition
of the canonical extension operation ({Definition \ref{CanonicalExtension}})
makes fundamental use of the assumption that \(\lambda\) is a limit ordinal.
In particular, this definition exploits the
hierarchy \(\langle V_\alpha\rangle_{\alpha < \lambda}\) stratifying
\(V_{\lambda}\); 
this hierarchy seems to have no analog at the successor even levels.
But on further thought, we could have defined \(j^+(X)\) for \(X\in V_{\lambda+1}\)
as follows: 
\[j^+(X) = \bigcup \{j(a) : a\in V_\lambda\text{ and }a\subseteq X\}\]
Thus \(j^+(X)\) is the union of the image of \(j\) on all the subsets of \(X\)
that belong to \(V_\lambda\).

At successor ordinals we must generalize this slightly, instead
taking the union of the image of \(j\) on all the subsets of \(X\) 
that are {\it coded} in \(V_\lambda\).
\begin{dfn}\label{dfn:coding}
  Suppose \(a\) and \(b\) are sets.
  For any set \(x\),
  let \((a)_x\) 
  denote the set \(\{y : \lceil x,y\rceil \in a\}\),
  and let \((a \mid b) = \{(a)_x : x\in b\}\).
\end{dfn}
Thus for \(a,b\in V_\lambda\), \((a \mid b)\) is the subset of \(V_{\lambda}\)
whose elements are the sections \((a)_x\in V_\lambda\) of the 
binary relation coded by \(a\)
that are indexed by some \(x\in b\). 
Say a set \(X\subseteq V_\lambda\) is {\it coded in \(V_\lambda\)}
if \(X = (a \mid b)\) for some \(a,b\in V_\lambda\).
For \(\lambda\) a limit ordinal, every set coded in \(V_\lambda\) belongs to \(V_\lambda\),
but if \(\lambda\) is a successor ordinal, then the sets coded in \(V_\lambda\)
are precisely those \(X\subseteq V_\lambda\) such that
there is a partial surjection from \(V_{\lambda-1}\) onto \(X\). 
\begin{dfn}
Suppose \(\lambda\) is an ordinal. For any function \(j : V_\lambda \to V_\lambda\), 
the {\it canonical extension of \(j\)} is the function \(j^+ : V_{\lambda+1}\to V_{\lambda+1}\)
defined by 
\[j^+(X) = \bigcup \{(j(a) \mid j(b)) : a,b\in V_{\lambda}\text{ and }(a \mid b)\subseteq X\}
\qedhere\]
\end{dfn}
While \(j^+\) is well-defined for any function \(j\), it is not of much interest
unless \(j\) has the property that \((j(a) \mid j(b)) = (j(a') \mid j(b'))\)
whenever \((a \mid b) = (a' \mid b')\).

Suppose \(a,b\in V_\lambda\), \(X\in V_{\lambda+1}\), 
and \((a \mid b)\subseteq X\).
The fact that \((a \mid b)\) is included in \(X\) is a first-order expressible property of \(a,b,\) and \(X\) 
in \(V_{\lambda+1}\), so
for any \(k \in \mathscr E(V_{\lambda+1})\),
\((k(a) \mid k(b))\subseteq k(X)\).
It follows that \((k\restriction V_\lambda)^+(X)\subseteq k(X)\), whether \(\lambda\) is even or odd.
The reverse inclusion, however, will be true if and only if \(\lambda\) is even.
\begin{dfn}
  Suppose \(\lambda\) is an ordinal. An embedding \(j : V_\lambda\to V_\lambda\) is
  {\it cofinal} if for any set \(c\in V_\lambda\), there exist sets \(a,b\in V_\lambda\) such that
  \(c\in (j(a) \mid j(b))\).
\end{dfn}
Equivalently, \(j : V_\lambda\to V_\lambda\) is cofinal if \(j^+(V_\lambda) = V_\lambda\).
It follows immediately that if \(k\in \mathscr E(V_{\lambda+1})\) and
\(k = (k\restriction V_\lambda)^+\), 
then \(k\restriction V_\lambda\) must be cofinal. The converse is also true:

\begin{lem}\label{CofinalEmbeddingsExtendCanonically}
  Suppose \(k\in \mathscr E(V_{\lambda+1})\)
  and \(k\restriction V_\lambda\) is cofinal. Then \(k = (k\restriction V_\lambda)^+\).
  \begin{proof}
    Fix \(X\in V_{\lambda+1}\). Our comments above show that
    \((k\restriction V_\lambda)^+(X)\subseteq k(X)\). For the reverse
    inclusion, fix \(c\in k(X)\). We will show \(c\in (k\restriction V_\lambda)^+(X)\).

    Since \(k\restriction V_\lambda\) is cofinal,
    there are sets \(a,b\in V_\lambda\) such that \(c\in (k(a) : k(b))\).
    Let \(b' = \{x\in b : (a)_x\in X\}\), so that \((a \mid b') = (a \mid b)\cap X\).
    Now  \[c\in  (k(a)  \mid  k(b))\cap k(X) = 
    k((a  \mid  b)\cap X) = k\left(a  \mid  b'\right)\subseteq (k\restriction V_\lambda)^+(X)\]
    This shows \(k(X)\subseteq (k\restriction V_\lambda)^+(X)\), completing the proof.
  \end{proof}
\end{lem}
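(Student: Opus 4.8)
The plan is to prove both inclusions of $k = (k\restriction V_\lambda)^+$ separately, having already observed before the statement that $(k\restriction V_\lambda)^+(X) \subseteq k(X)$ holds for any $k \in \mathscr E(V_{\lambda+1})$, regardless of parity (this follows since ``$(a \mid b) \subseteq X$'' is first-order over $V_{\lambda+1}$, so is preserved by $k$). Thus I only need the reverse inclusion $k(X) \subseteq (k\restriction V_\lambda)^+(X)$, and here is where cofinality of $k \restriction V_\lambda$ does the work. Fix $X \in V_{\lambda+1}$ and an arbitrary $c \in k(X)$; the goal is to produce $a,b \in V_\lambda$ with $(a \mid b) \subseteq X$ and $c \in (k(a) \mid k(b))$, which by definition of $j^+$ places $c$ into $(k\restriction V_\lambda)^+(X)$.

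First I would invoke cofinality to grab $a, b \in V_\lambda$ with $c \in (k(a) \mid k(b))$; at this stage we have no control over whether $(a \mid b) \subseteq X$, so $a,b$ must be trimmed. The key trick is to cut down the index set $b$ to $b' = \{x \in b : (a)_x \in X\}$. Then by inspection of Definition \ref{dfn:coding}, $(a \mid b') = (a \mid b) \cap X \subseteq X$, so the pair $(a, b')$ is now eligible for the union defining $j^+(X)$. It remains to check $c \in (k(a) \mid k(b'))$. For this I would use elementarity of $k$ applied to the identity $(a \mid b) \cap X = (a \mid b')$ — noting that the operations $(a,b) \mapsto (a \mid b)$ and intersection are first-order definable over $V_{\lambda+1}$ (using the Quine--Rosser coding, which is $\Sigma_0$ over $V_\lambda$ and hence available) — to conclude $k\big((a \mid b) \cap X\big) = k\big((a \mid b')\big) = (k(a) \mid k(b'))$, and similarly $k\big((a \mid b)\big) = (k(a) \mid k(b))$, so that
\[
c \in (k(a) \mid k(b)) \cap k(X) = k\big((a \mid b) \cap X\big) = (k(a) \mid k(b')) \subseteq (k\restriction V_\lambda)^+(X).
\]
This gives $k(X) \subseteq (k\restriction V_\lambda)^+(X)$, and combined with the already-noted reverse inclusion, $k(X) = (k\restriction V_\lambda)^+(X)$ for all $X \in V_{\lambda+1}$, i.e. $k = (k\restriction V_\lambda)^+$.

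The only delicate point — and the step I would be most careful about — is justifying that $(k(a) \mid k(b))$ really is the $k$-image of $(a \mid b)$ as a subset of $V_\lambda$, i.e. that $k$ ``commutes with'' the coding operation $(a,b) \mapsto (a \mid b)$. This is not literally $k(\{\ldots\}) = \{\ldots\}$ applied to a set in $V_{\lambda+1}$ in a one-line way, because $(a \mid b)$ need not belong to $V_\lambda$ when $\lambda$ is a successor; rather one argues membershipwise: for $z \in V_\lambda$, ``$z \in (a \mid b)$'' unfolds to ``$\exists x \in b\ [z = (a)_x]$'', which in turn is ``$\exists x \in b\ \forall y\,[y \in z \leftrightarrow \lceil x,y\rceil \in a]$'', a first-order formula over $V_{\lambda+1}$ in parameters $a,b,z$; applying elementarity of $k$ to this formula (and to its negation) shows $z' \in (k(a) \mid k(b))$ iff $z' = k(z)$ for some $z \in (a \mid b)$, which is exactly the statement that $(k(a) \mid k(b)) = k``(a \mid b)$ as subsets of $V_\lambda$. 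Everything else — the set-theoretic identity $(a \mid b') = (a \mid b) \cap X$, and the bookkeeping with the union — is routine unwinding of the definitions.
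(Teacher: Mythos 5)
Your main argument is correct and is essentially the paper's proof: the same two inclusions, the same trimming $b'=\{x\in b:(a)_x\in X\}$, and the same chain of equalities
$(k(a)\mid k(b))\cap k(X)=k((a\mid b)\cap X)=k((a\mid b'))=(k(a)\mid k(b'))\subseteq(k\restriction V_\lambda)^+(X)$.

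However, the final paragraph --- the step you single out as the delicate one --- contains a genuine error, though luckily it does not infect the displayed chain. You assert that elementarity yields $(k(a)\mid k(b))=k``(a\mid b)$, i.e.\ that $(k(a)\mid k(b))$ is the \emph{pointwise} image of $(a\mid b)$ under $k$. That is false in general. Applying elementarity to the formula ``$z\in(a\mid b)$'' only gives $z\in(a\mid b)\iff k(z)\in(k(a)\mid k(b))$, i.e.\ $k``(a\mid b)=(k(a)\mid k(b))\cap\rg(k)$; it does not make $k$ surjective onto $(k(a)\mid k(b))$. Concretely, $(k(a)\mid k(b))$ has a section $(k(a))_x$ for \emph{every} $x\in k(b)$, and $k(b)$ typically contains elements not of the form $k(x)$ with $x\in b$ (take $b=\crit(k)$, so that $\crit(k)\in k(b)\setminus k``b$). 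Indeed, if your identity held, then cofinality of $k\restriction V_\lambda$ would put every $c\in V_\lambda$ into some $k``(a\mid b)\subseteq\rg(k)$, so $\crit(k)\in\rg(k)$ --- absurd. What the proof actually needs, and what your chain correctly uses, is $k\bigl((a\mid b)\bigr)=(k(a)\mid k(b))$ with $k$ applied to $(a\mid b)$ \emph{as an element of $V_{\lambda+1}$}. Contrary to your worry, this \emph{is} the one-line move: $(a\mid b)$ is a subset of $V_\lambda$, hence an element of $\dom(k)=V_{\lambda+1}$ even when $\lambda$ is a successor, and the graph of $(a,b)\mapsto(a\mid b)$ is first-order definable over $V_{\lambda+1}$, so $k$ commutes with it. The distinction between $k(X)$ and $k``X$ is exactly what the cofinality machinery in this section is built to negotiate, so it is worth keeping it sharp.
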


The periodicity phenomenon is driven by the following lemma:
\begin{lem}\label{AutoCofinal}
  Suppose \(j : V_{\lambda+2}\to V_{\lambda+2}\) is an elementary embedding
  such that \((j\restriction V_\lambda)^+ = j\restriction V_{\lambda+1}\).
  Then \(j\) is cofinal.
  \begin{proof}
    Fix \(j : V_{\lambda+2}\to V_{\lambda+2}\) and \(C\in V_{\lambda+2}\).
    We must show that there exist \(A,B\in V_{\lambda+2}\) such that
    \(C\in (j(A)  \mid  j(B))\). Let \(B\) consist of those sets \(x\in V_{\lambda+1}\)
    such that the binary relation \(\{(a,b) : \lceil a,b\rceil\in x\}\) coded by \(x\)
    is the graph of a function \(f_x : V_{\lambda}\to V_{\lambda}\). 
    By elementarity, \(j(B) = B\).

    Now define
    \(A = \{\lceil x,y\rceil \in B\times V_{\lambda+1} : f_x^+(y)\in C\}\).
    In other words, for each \(x\in B\), \((A)_x = (f_x^+)^{-1}[C]\).
    Now \[j(A) = \{\lceil x,y\rceil \in B\times V_{\lambda+1} : f_x^+(y)\in j(C)\}.\]
    Let \(w = \{\lceil a, j(a)\rceil: a\in V_\lambda\}\), so that 
    \(f_w = j\restriction V_\lambda\). Then \(w\in B\) and
    \[(j(A))_w = (f_w^{+})^{-1}(j(C)) = ((j\restriction V_\lambda)^+)^{-1}[j(C)] = 
    (j\restriction V_{\lambda+1})^{-1}[j(C)] = C.\]
    Therefore \(C= (j(A))_w \in (j(A)  \mid  B) = (j(A)  \mid  j(B))\), as desired.
  \end{proof}
\end{lem}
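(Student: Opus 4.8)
The plan is to verify the definition of cofinality directly: given $C\in V_{\lambda+2}$, I want to produce $A,B\in V_{\lambda+2}$ with $C\in(j(A)\mid j(B))$, i.e.\ with $C$ realized as one of the sections $(j(A))_x$ for some $x\in j(B)$. The organizing idea is that the hypothesis $(j\restriction V_\lambda)^+=j\restriction V_{\lambda+1}$ exhibits $j\restriction V_{\lambda+1}$ as the canonical extension of one \emph{particular} function $V_\lambda\to V_\lambda$, namely $j\restriction V_\lambda$ itself. So I will let $B$ consist of codes for \emph{all} functions $V_\lambda\to V_\lambda$, and build $A$ so that the section $(A)_x$ is the $f_x^+$-preimage of $C$, where $f_x$ is the function coded by $x$; then, after applying $j$ and reading off the section indexed by the code of $j\restriction V_\lambda$, I should recover $C$ on the nose.

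First I would set up the coding. Using the flat (Quine--Rosser) pairing, a binary relation on $V_\lambda$ is an element of $V_{\lambda+1}$, decoded by a fixed $\Sigma_0$ formula. Also $V_{\lambda+1}=\bigcup V_{\lambda+2}$ and $V_\lambda=\bigcup V_{\lambda+1}$ are parameter-free definable over $V_{\lambda+2}$, so $j$ fixes both $V_{\lambda+1}$ and $V_\lambda$; in particular $j\restriction V_\lambda$ is genuinely a function from $V_\lambda$ to $V_\lambda$, and $j\restriction V_{\lambda+1}$ maps $V_{\lambda+1}$ to $V_{\lambda+1}$. Let $B$ be the set of $x\in V_{\lambda+1}$ whose coded relation is the graph of a function $f_x:V_\lambda\to V_\lambda$; this is parameter-free definable over $V_{\lambda+2}$, so $j(B)=B$ by elementarity. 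Next I would note that for $x\in B$ and $y\in V_{\lambda+1}$ the value $f_x^+(y)=\bigcup\{(f_x(a)\mid f_x(b)):a,b\in V_\lambda,\ (a\mid b)\subseteq y\}$ is computed entirely inside $V_{\lambda+1}$, hence the operation $(x,y)\mapsto f_x^+(y)$ is uniformly definable over $V_{\lambda+2}$, and set
\[A=\{\lceil x,y\rceil:x\in B,\ y\in V_{\lambda+1},\ f_x^+(y)\in C\},\]
so that $(A)_x=(f_x^+)^{-1}[C]$ for every $x\in B$. The rank bound on Quine--Rosser pairs puts $A\in V_{\lambda+2}$, and since $A$ is defined over $V_{\lambda+2}$ from the parameter $C$ using the fixed uniform definition of $f_x^+$ (with $j(B)=B$), elementarity gives $(j(A))_x=(f_x^+)^{-1}[j(C)]$ for all $x\in B$.

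Finally I would feed in the code of $j$ itself: let $w=\{\lceil a,j(a)\rceil:a\in V_\lambda\}$, the graph of $j\restriction V_\lambda$. Then $w$ is a binary relation on $V_\lambda$, so $w\in V_{\lambda+1}$, and $w\in B$ with $f_w=j\restriction V_\lambda$; hence $f_w^+=(j\restriction V_\lambda)^+=j\restriction V_{\lambda+1}$ by hypothesis. Therefore
\[(j(A))_w=(f_w^+)^{-1}[j(C)]=(j\restriction V_{\lambda+1})^{-1}[j(C)]=C,\]
the last equality because $j$ is injective and $C\subseteq V_{\lambda+1}$. Thus $C=(j(A))_w\in(j(A)\mid B)=(j(A)\mid j(B))$ with $A,B\in V_{\lambda+2}$, which is exactly what cofinality demands. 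I expect the only real friction to be coding bookkeeping, not any conceptual obstacle: checking that $A$ and $B$ really are elements of $V_{\lambda+2}$ (which is precisely what the flat pairing function buys), and --- the delicate point --- checking that the formula defining $A$ transforms under $j$ into the formula defining $j(A)$ with $C$ replaced by $j(C)$ and $B$ unchanged, which requires ``$f_x^+(y)\in C$'' to be written as a bona fide first-order condition over $V_{\lambda+2}$ that is uniform in $x$ and $y$. The one genuinely clever ingredient is letting $B$ range over codes of \emph{all} functions $V_\lambda\to V_\lambda$, since that is what later lets me substitute the specific code $w$ of $j\restriction V_\lambda$ and so cash in the hypothesis $(j\restriction V_\lambda)^+=j\restriction V_{\lambda+1}$.
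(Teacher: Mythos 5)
Your proposal is correct and follows essentially the same route as the paper's proof: the same choice of $B$ (codes of all functions $V_\lambda\to V_\lambda$, fixed by $j$), the same $A$ with sections $(A)_x=(f_x^+)^{-1}[C]$, and the same substitution of the code $w$ of $j\restriction V_\lambda$ to cash in the hypothesis $(j\restriction V_\lambda)^+=j\restriction V_{\lambda+1}$. The extra care you take with the uniform definability of $(x,y)\mapsto f_x^+(y)$ over $V_{\lambda+2}$ is exactly the bookkeeping the paper leaves implicit.
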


\begin{tm}\label{LocalPeriodicity}
  Suppose \(\lambda\) is an even ordinal and \(j : V_\lambda \to V_\lambda\) is an
  elementary embedding. Then \(j\) is cofinal. Suppose in addition that
  \(j\) extends to an elementary embedding \(k : V_{\lambda+1}\to V_{\lambda+1}\).
  Then \(k = j^+\).
  \begin{proof}
    Assume by induction that the corollary is true for ordinals smaller than \(\lambda\).
    If \(\lambda\) is a limit ordinal, then \(j\) is trivially cofinal.
    If \(\lambda\) is a successor ordinal, then by our induction hypothesis applied to \(\lambda-2\),
    \(j\restriction V_{\lambda+1} = (j\restriction V_\lambda)^+\), so
    \(j\) is cofinal by Lemma \ref{AutoCofinal}.
    Since \(j\) is cofinal, 
    Lemma \ref{CofinalEmbeddingsExtendCanonically} implies
    that if \(j\) extends to an elementary embedding \(k : V_{\lambda+1}\to V_{\lambda+1}\),
    then \(k = j^+\). This completes the proof.
  \end{proof}
\end{tm}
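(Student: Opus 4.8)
The plan is to prove both assertions together by induction on the even ordinal $\lambda$, using the two lemmas just proved to drive the inductive step. Fix an even $\lambda$ and an elementary $j:V_\lambda\to V_\lambda$, and assume the theorem holds for all even ordinals below $\lambda$. The base cases, $\lambda=0$ and $\lambda$ a limit ordinal, are immediate: cofinality is trivial there, since any $c\in V_\lambda$ lies in some $V_{\alpha+1}$ with $\alpha+1<\lambda$, and $V_{\alpha+1}$ is coded in $V_\lambda$ by sets $a,b\in V_\lambda$ (say $b=V_{\alpha+1}$ and $a$ coding $\in\restriction V_{\alpha+1}$), whence $(j(a)\mid j(b))=j(V_{\alpha+1})=V_{j(\alpha+1)}\ni c$ by elementarity. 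Granting cofinality of $j$, if $j$ extends to an elementary $k:V_{\lambda+1}\to V_{\lambda+1}$ then $k\restriction V_\lambda=j$ is cofinal, so Lemma~\ref{CofinalEmbeddingsExtendCanonically} gives $k=(k\restriction V_\lambda)^+=j^+$.

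So the real step is the case $\lambda=\mu+2$ with $\mu$ even (hence $\mu<\lambda$). I would first observe that $j$ necessarily fixes $\mu$ and $\mu+1$: $j$ is monotone on ordinals, so $j(\mu+1)\geq\mu+1$, but $j(\mu+1)$ is an ordinal lying in $V_\lambda$, hence below $\OR\cap V_\lambda=\mu+2$, forcing $j(\mu+1)=\mu+1$ and then $j(\mu)=\mu$. Consequently $j$ restricts to maps $j\restriction V_\mu:V_\mu\to V_\mu$ and $j\restriction V_{\mu+1}:V_{\mu+1}\to V_{\mu+1}$, and both restrictions are fully elementary: for $\beta<\lambda$ the parameter $V_\beta$ lies in $V_\lambda$, $j(V_\beta)=V_{j(\beta)}$ by Lemma~\ref{lem:Sigma_1-elem_pres_rank}, and elementarity of $j$ then makes $j\restriction V_\beta$ elementary as a map $V_\beta\to V_{j(\beta)}$. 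Thus $j\restriction V_\mu$ is an elementary self-embedding of $V_\mu$ extending to the elementary self-embedding $j\restriction V_{\mu+1}$ of $V_{\mu+1}$; since $\mu$ is even and below $\lambda$, the induction hypothesis applies and yields $j\restriction V_{\mu+1}=(j\restriction V_\mu)^+$.

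With $(j\restriction V_\mu)^+=j\restriction V_{\mu+1}$ in hand, I would apply Lemma~\ref{AutoCofinal}, taking its ``$\lambda$'' to be $\mu$ and its ``$j$'' to be our $j:V_{\mu+2}\to V_{\mu+2}$; its hypothesis is precisely what was just verified, so $j$ is cofinal, which proves the first assertion. For the second, if $j$ extends to an elementary $k:V_{\lambda+1}\to V_{\lambda+1}$, then $k\restriction V_\lambda=j$ is cofinal, so Lemma~\ref{CofinalEmbeddingsExtendCanonically} gives $k=(k\restriction V_\lambda)^+=j^+$, completing the induction.

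The genuinely substantive work has already been isolated in Lemmas~\ref{AutoCofinal} and \ref{CofinalEmbeddingsExtendCanonically}, which may be taken as given, so I do not anticipate a serious obstacle in the theorem itself: it is essentially bookkeeping for the induction. The one point that really needs care is checking, at an even successor $\lambda=\mu+2$, that $j$ genuinely restricts to elementary \emph{self}-embeddings of $V_\mu$ and $V_{\mu+1}$ — which is why one first verifies that $j$ fixes $\mu$ and $\mu+1$ — so that the induction hypothesis, stated only for self-embeddings $V_\gamma\to V_\gamma$, is legitimately applicable.
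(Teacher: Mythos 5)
Your proof is correct and follows essentially the same route as the paper: induct on even $\lambda$, dispose of the limit (and trivial) case directly, and at a successor $\lambda=\mu+2$ apply the induction hypothesis to $j\restriction V_\mu$ and its extension $j\restriction V_{\mu+1}$ to conclude $(j\restriction V_\mu)^+=j\restriction V_{\mu+1}$, then invoke Lemma~\ref{AutoCofinal} for cofinality and Lemma~\ref{CofinalEmbeddingsExtendCanonically} for $k=j^+$. The details you add --- that $j$ fixes $\mu$ and $\mu+1$ and hence restricts to elementary self-embeddings of $V_\mu$ and $V_{\mu+1}$, and the explicit coding witnessing cofinality at limit $\lambda$ --- are precisely the bookkeeping the paper's terser proof leaves implicit.
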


The requirement that \(\lambda\) be even in the previous theorem is unusual, 
but one can show that the theorem fails whenever \(\lambda\) is odd.
The proof given here is an elaboration on that of 
{Theorem \ref{LimitUndefinable}}.
\begin{tm}\label{EvenUndefinable}
Suppose \(\alpha\) is an ordinal,
\(j \in \mathscr E(V_\alpha)\),
and \(a,b\in V_{\alpha}\). Then \(j\) is not definable over
\(V_\alpha\) from parameters in \((j(a)  \mid  j(b))\).
\begin{proof}
Suppose towards a contradiction that the theorem fails.
Then there is
a formula \(\varphi(v_0,v_1,v_2)\) and a parameter \(p\in (j(a)  \mid  j(b))\) such that
\[j(u) = w\iff V_{\alpha}\sats \varphi(u,w,p)\] for all \(u,w\in 
V_{\alpha}\).
For \(q\in V_{\alpha}\), define a relation
\[j_q = \{(u,w)\in V_{\alpha}^2 :  V_{\alpha}\sats\varphi(u,w,q)\}.\]

For \(n \leq \omega\), \(q\in V_\alpha\) is {\it \(n\)-bad}
if $j_q : V_{\alpha}\to V_{\alpha}$ is a nontrivial \(\Sigma_n\)-elementary embedding
and there exist \(a',b'\in V_\alpha\) such that \(q \in (j_q(a')  \mid  j_q(b'))\).

So $p$ is $\om$-bad. Let $\kappa = \min \{\crit(j_q) :  q\text{ is 
}\omega\text{-bad}\}$.
Fix an \(\omega\)-bad parameter \(r\) such that \(\crit(j_r) = \kappa\).

Fix \(c,d\in V_\alpha\) with \(r \in (j_r(c) \mid j_r(d))\).
For each \(n \leq \omega\), let \[d_n = \{x\in d :  (c)_x\text{ is 
\(n\)-bad}\}\]
By the elementarity of \(j_r\),
\[j_r(d_n) = \{x\in j_r(d) :  (j_r(c))_x\text{ is }n\text{-bad}\}.\]
Let \(e = \{\lceil n,x\rceil : x\in d_n\}\), so that \((e)_n = d_n\).
Since \(d_\omega = \bigcap_{n < \omega} (e)_n\),
\(j_r(d_\omega) = \bigcap_{n < \omega} (j_r(e))_n = \bigcap_{n <\omega} j_r(d_n)\).
It follows that 
\[j_r(d_\omega) = \bigcap \{x \in j_r(d) : j_r(c)_x \text{ is 
}\omega\text{-bad}\}.\]
In particular, \(r\in (j_r(c)  \mid  j_r(d_\omega))\) 
and every \(q\in (j_r(c)  \mid  j_r(d_\omega))\) is \(\omega\)-bad, so
\[\min \{\crit(j_q) : q\in (j_r(c)  \mid  j_r(d_\omega))\}= \kappa.\]
Therefore letting \(\bar \kappa = \min \{\crit(j_q) : q\in (c 
\mid d_\omega)\}\),
\(j_r(\bar \kappa) = \kappa\), which contradicts that \(\kappa\) is the
critical point of \(j_r\).
\end{proof}
\end{tm}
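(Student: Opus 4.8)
The plan is to adapt the critical-point minimality argument used for Suzuki's Fact and for Theorem~\ref{LimitUndefinable}, now carried out through the flat-pairing coding of \S\ref{subsec:coding} so that it accommodates a parameter lying inside a coded family $(j(a)\mid j(b))$. Assume for contradiction that there are a formula $\varphi$ and a parameter $p\in(j(a)\mid j(b))$ with $j(u)=w\iff V_\alpha\sats\varphi(u,w,p)$ for all $u,w\in V_\alpha$. For $q\in V_\alpha$ put $j_q=\{(u,w)\in V_\alpha^2:V_\alpha\sats\varphi(u,w,q)\}$, and for $n\leq\om$ say $q$ is \emph{$n$-bad} if $j_q$ is a nontrivial $\Sigma_n$-elementary embedding of $V_\alpha$ into itself and, in addition, $q\in(j_q(a')\mid j_q(b'))$ for some $a',b'\in V_\alpha$. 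Two observations make this go: for each fixed finite $n$, ``$q$ is $n$-bad'' is a single first-order formula over $(V_\alpha,\in)$ --- using a $\Sigma_n$ partial truth predicate together with the $\Sigma_0$-definability of the Quine--Rosser pairing --- uniformly in $n$; and ``$q$ is $\om$-bad'' coincides with $\bigcap_{n<\om}\{q:q\text{ is }n\text{-bad}\}$, since being $\Sigma_n$-elementary for all $n$ just means being fully elementary. By hypothesis $p$ is $\om$-bad, so $\kappa:=\min\{\crit(j_q):q\text{ is }\om\text{-bad}\}$ exists; fix an $\om$-bad $r$ with $\crit(j_r)=\kappa$, and fix $c,d\in V_\alpha$ with $r\in(j_r(c)\mid j_r(d))$.

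Next I would push the full elementarity of $j_r$ through the coding. For $n\leq\om$ set $d_n=\{x\in d:(c)_x\text{ is }n\text{-bad}\}$; these lie in $V_\alpha$, and elementarity gives $j_r(d_n)=\{x\in j_r(d):(j_r(c))_x\text{ is }n\text{-bad}\}$. Coding the sequence $\langle d_n\rangle_{n<\om}$ by a single $e\in V_\alpha$ with $(e)_n=d_n$, and using $j_r\rest\om=\id$, one obtains $j_r(d_\om)=\bigcap_{n<\om}j_r(d_n)=\{x\in j_r(d):(j_r(c))_x\text{ is }\om\text{-bad}\}$. Hence $r\in(j_r(c)\mid j_r(d_\om))$ (the index $x$ with $(j_r(c))_x=r$ witnesses $\om$-badness of $r$, so it lands in $j_r(d_\om)$), and \emph{every} element of $(j_r(c)\mid j_r(d_\om))$ is $\om$-bad; therefore $\min\{\crit(j_q):q\in(j_r(c)\mid j_r(d_\om))\}=\kappa$.

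Finally I would pull this equation back. Over $V_\alpha$ the assignment $(A,B)\mapsto\min\{\gamma:\gamma=\crit^{*}(j_q)\text{ for some }q\in(A\mid B)\}$, where $\crit^{*}(j_q)$ is the first-order surrogate ``least ordinal moved by $j_q$, if one exists,'' is definable from $A,B$ alone; on the inputs at hand it returns the genuine critical point because every relevant $q$ is $\om$-bad, hence $j_q$ an honest nontrivial elementary self-embedding. Thus $V_\alpha$ computes this function to send $(j_r(c),j_r(d_\om))$ to $\kappa$, so by elementarity of $j_r$ it sends $(c,d_\om)$ to some $\bar\kappa$ with $j_r(\bar\kappa)=\kappa$. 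But $\kappa=\crit(j_r)$ is never in $\rg(j_r)$ --- a nontrivial elementary self-embedding is order-preserving and fixes everything strictly below its critical point --- so $\kappa=j_r(\bar\kappa)$ is the desired contradiction.

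I expect the only real friction to be bookkeeping: checking that every object used (the relations $j_q$, the sections $(c)_x$, the predicates ``$n$-bad'', the sets $d_n$, the code $e$) genuinely stays inside $V_\alpha$ and is first-order definable over $(V_\alpha,\in)$ when $\alpha$ is a successor ordinal, where $V_\alpha$ is not closed under ordinary ordered pairs --- this is exactly what the flat pairing function and coding conventions of \S\ref{subsec:coding} are set up to provide, together with a $\Sigma_n$ partial truth predicate to make ``$\Sigma_n$-elementary'' a single formula. One should also note that the apparent self-reference in the definition of ``$n$-bad'' is harmless: for each fixed finite $n$ it only mentions $j_q$ being $\Sigma_n$-elementary (bounded complexity) plus a $\Sigma_0$ condition on the pairing, so there is no circularity, and the $\om$-level is reached purely by intersecting. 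Everything else is a routine replay of the Theorem~\ref{LimitUndefinable} argument, with the coded family $(j_r(c)\mid j_r(d_\om))$ playing the role that $j(A)$ played there.
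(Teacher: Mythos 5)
Your proposal is correct and follows essentially the same route as the paper's proof: the same notion of $n$-badness (including the clause that $q$ lies in some $(j_q(a')\mid j_q(b'))$), the same minimization of critical points over $\omega$-bad parameters, the same coding of $\langle d_n\rangle_{n<\omega}$ by a single set $e$ to pass to $d_\omega$, and the same pullback of the definable minimum to produce $\bar\kappa$ with $j_r(\bar\kappa)=\kappa$. The extra remarks on partial truth predicates and the flat-pairing bookkeeping are exactly the details the paper leaves implicit.
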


Putting everything together, we can
now prove Theorem \ref{tm:cumulative_periodicity};
that is, given 
$j\in\mathscr{E}(V_\lambda)$,
then $j$ is definable from parameters
over $V_\lambda$ iff $\lambda$ is odd:

\begin{proof}[Proof of Theorem \ref{tm:cumulative_periodicity}]
Suppose \(\lambda\) is even. Then by Theorem \ref{LocalPeriodicity},
\(j\) is cofinal, which means that every \(p\in V_\lambda\)
belongs to \((j(a):j(b))\) for some \(a,b\in V_\lambda\).
Therefore by Theorem \ref{EvenUndefinable},
\(j\) is not definable from any parameter in \(V_\lambda\).

On the other hand, if \(\lambda\) is odd, then \(j = (j\restriction V_{\lambda-1})^+\)
by Theorem \ref{LocalPeriodicity}, and therefore \(j\) is definable over \(V_\lambda\)
from \(j\restriction V_\lambda\), or more precisely from 
the set \(\{\lceil x,j(x)\rceil : x\in V_{\lambda-1}\}\), which belongs to \(V_\lambda\).
\end{proof}

\section{Reinhardt ultrafilters}\label{sec:reinhardt_filters}
Solovay's discovery of supercompactness in the late 1960's marked the beginning 
of the modern era of large cardinal theory.
In the context of ZFC, supercompactness has both a combinatorial 
characterization in terms of normal ultrafilters and a ``model theoretic'' 
characterization in terms of elementary embeddings \(j : V\to M\) where \(M\) 
is 
an inner model.
In the choiceless context, 
however, the equivalence between 
the usual characterizations 
is no longer 
provable, and instead supercompactness 
splinters into a number of inequivalent but interrelated concepts.

The 
rank-to-rank embeddings \(j : V_{\delta}\to V_{\delta}\) studied here 
exhibit features  reminiscent of
super\-com\-pact\-ness.
In this section we evidence this via a characterization in terms
of normal ultrafilters  in the case
that $\delta=\alpha+2$.
\footnote{
In this section we assume familiarity with ultrapowers as used in set theory;
the reader familiar with supercompactness measures should be fine.}
But since these embeddings force us into the choiceless
realm, we must deal with the subtleties this brings.
A key issue in this regard is that one needs to be more careful regarding
{\L}o\'{s}'s Theorem for ultrapowers, given the role of choice in its usual 
proof. 

\subsection{Ultrapowers and {\L}o\'{s}'s Theorem}\label{subsec:ultrapowers}

In this section we give a quick
review of some standard  
background. We assume familiarity with (ultra)filters, which can be found in 
standard texts. If 
$\mathscr{F}$ is a filter
over a set $X$ (so $X\in\mathscr{F}$)
and $\varphi$ is some property,
 say that $\varphi(x)$ holds for \emph{$\mathscr{F}$-almost all }$x$
(or just \emph{almost all} $x$)
iff $\{x\in X\bigm|\varphi(x)\}\in\mathscr{F}$.

We first recall the definition of \emph{ultrapowers} in our context.
Let $\gamma,\beta\in\OR$ and let $\mathscr{F}$ be any ultrafilter over 
$V_{\gamma}$. Let $\mathscr{U}$ denote the set of all functions 
$f:V_{\gamma}\to V_\beta$. We define a binary relation over $\mathscr{U}$ by
\[ f\approx_{\mathscr{F}}g \iff \{x\in 
V_\alpha\bigm|f(x)=g(x)\}\in\mathscr{F}.\]
Because $\mathscr{F}$ is a filter, it is easy to see that 
$\approx_{\mathscr{F}}$
is an equivalence relation; let $[f]^{V_\beta}_{\mathscr{F}}$ be the 
equivalence class of $f$, where we just write $[f]$ if there is no ambiguity. 
We 
define also the relation
\[ f\in_{\mathscr{F}}g\iff\{x\in V_\alpha\bigm|f(x)\in g(x)\}\in\mathscr{F}.\]
Then $\in_{\mathscr{F}}$ respects $\approx_{\mathscr{F}}$.
The \emph{ultrapower} $\Ult(V_\beta,\mathscr{F})$
of $V_\beta$ by $\mathscr{F}$ is the structure $(U,\in^U)$,
where $U=\{[f]\bigm|f\in\mathscr{U}\}$, and $\in^U$ is the binary relation on 
$U$
induced by $\in_{\mathscr{F}}$. The \emph{ultrapower embedding}
$i^{V_{\beta}}_{\mathscr{F}}:V_\beta\to U$ is defined
$i^{V_{\beta}}_{\mathscr{F}}(x)=[c_x]$ where $c_x\in\mathscr{U}$
is the constant function with constant value $x$.

Now let us say that \emph{$\Sigma_n$-{\L}o\'{s}' Theorem for $U$} holds
iff for all $\Sigma_n$ formulas
$\varphi$ (in the language of set theory) and functions 
$f_1,\ldots,f_n\in\mathscr{U}$, we have
\[\begin{array}{cl} &U\sats\varphi([f_1],\ldots,[f_n])\\
\iff& 
V_\beta\sats\varphi(f_1(x),\ldots,f_n(x))\text{ for almost 
all }x\in V_{\gamma}.\end{array}\]
We just say \emph{{\L}o\'{s}' theorem holds for $U$} if $\Sigma_n$-{\L}o\'{s}' 
theorem holds 
for all $n<\om$.
For atomic formulas $\varphi(u,v)$ (``$u=v$'' and ``$u\in v$'')
the stated equivalence holds by definition.
Assuming $\AC$ it holds for all formulas, as proved by induction on  formula 
complexity. The only step that uses $\AC$ is that for quantifiers:
suppose for example that
\[ \sigma\in\mathscr{F}\text{ and for all }x\in\sigma\text{ we have } 
V_\beta\sats\exists w\ \varphi(w,f(x)).\]
Then we want  
$U\sats\exists w\ \varphi(w,[f])$,
which needs some $w\in\mathscr{U}$ with
$U\sats\varphi([w],[f])$.
So we need $w:V_{\gamma}\to V_\beta$ and 
 by induction, we need some $\sigma'$ such that
\[ \sigma'\in\mathscr{F}\text{ and for all 
}x\in\sigma'\text{, we have }V_\beta\sats\varphi(w(x),f(x)).\]
Using $\AC$, we can in fact take $\sigma'=\sigma$
and $w$ to be an appropriate choice function.
But it is important here that we don't actually require $\sigma'=\sigma$;
so even if $\AC$ fails and there is no choice function with domain $\sigma$,
there might be one with a smaller domain $\sigma'\in\mathscr{F}$.

If {\L}o\'{s}' Theorem holds for $U$ then the ultrapower 
embedding $i:V_\beta\to U$ is  elementary. (However, a key point is that
$U$ need not be wellfounded in general: consider for example
nonprincipal ultrafilters 
over $V_\om$.) If $U$ is wellfounded and extensional,
then by Mostowski's theorem, it is isomorphic
to its (transitive) Mostowski collapse,
and following the usual convention in this case, we then identify
these two. But we will at times need to deal with
ultrapowers without knowing that these properties hold.

In this section we are only actually interested in the case
that the ordinal $\beta$ above is a successor, so from now on,
we restrict to this case.
In order to analyze ultrapowers and the associated embeddings
defined as above,
we will observe that the coding apparatus from \S\ref{subsec:coding}
allows us to represent functions $f:V_\gamma\to V_\beta$
where $\gamma<\beta$ (such as those forming the ultrapower above), and simple 
properties
thereof, in a simple manner. 
That is, although maybe $f\notin V_\beta$, we  
define the \emph{code} of $f$ as
\[ \widetilde{f}=\{\lceil x,y\rceil\bigm|x\in V_{\gamma}
\text{ and }y\in f(x)\}; \]
note $\widetilde{f}\in V_\beta$ (as $\gamma<\beta$ and $\beta$ is a 
successor).
Unravelling the coding above and the flat pairing function,
it is straightforward to write  a 
$\Sigma_0$ formula $\psi$
such that for all such $\beta,\gamma,f$ we have
\[ \all x\in V_{\gamma}\ \all y\in 
V_{\beta-1}\ [y\in f(x)\Leftrightarrow 
V_\beta\sats\psi(\widetilde{f},x,y)].\]
More generally:
\begin{lem}\label{lem:general_Sigma_0_substitution} There is a recursive 
function $\varphi\mapsto\psi_\varphi$
 such that
for each 
$\Sigma_0$ formula $\varphi$,
$\psi_\varphi$ is a $\Sigma_0$ formula,
and for all successor ordinals $\beta>\om$ and ordinals $\gamma<\beta$
and all finite tuples $\vec{f}=(f_0,\ldots,f_{n-1})$ of functions
 $f_i:V_\gamma\to V_\beta$, and all $x\in V_\gamma$ and $z\in V_\beta$,
 \[
V_\beta\sats\varphi(f_0(x),\ldots,f_{n-1}(x),z)\iff 
V_\beta\sats\psi_\varphi(\widetilde{f_0},\ldots,\widetilde{f_1},x,z).\]
\end{lem}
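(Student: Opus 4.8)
The plan is to prove, by induction on the syntactic build-up of $\varphi$, a slight strengthening that leaves room for the variables that bounded quantifiers introduce. Partition the free variables of the formula into \emph{coded} slots $v_0,\ldots,v_{n-1}$ and \emph{plain} slots $w_0,\ldots,w_{m-1}$, and prove: for each $\Sigma_0$ formula $\varphi(v_0,\ldots,v_{n-1},w_0,\ldots,w_{m-1})$ there is a $\Sigma_0$ formula $\psi_\varphi(g_0,\ldots,g_{n-1},x,w_0,\ldots,w_{m-1})$, obtained from $\varphi$ by an explicit recursion, such that for all successor ordinals $\beta>\om$, all $\gamma<\beta$, all functions $f_i:V_\gamma\to V_\beta$ $(i<n)$, all $x\in V_\gamma$ and all $z_0,\ldots,z_{m-1}\in V_\beta$,
\[ V_\beta\sats\varphi\bigl(f_0(x),\ldots,f_{n-1}(x),z_0,\ldots,z_{m-1}\bigr)\iff V_\beta\sats\psi_\varphi\bigl(\widetilde{f_0},\ldots,\widetilde{f_{n-1}},x,z_0,\ldots,z_{m-1}\bigr). \]
The lemma is the case $m=1$, and $\varphi\mapsto\psi_\varphi$ is recursive because the construction is.

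First I would collect the ingredients from \S\ref{subsec:coding}: the maps $s$, $f_0$, $f_1$ and their inverses, the Quine--Rosser pairing $(a,b)\mapsto\lceil a,b\rceil$ and its inverse, and the sectioning operation $(a,x)\mapsto(a)_x$ are all $\Sigma_0$-definable, so ``$c=\lceil a,b\rceil$'' and ``$c=(a)_x$'' are $\Sigma_0$. By injectivity of $\lceil\cdot,\cdot\rceil$ on $V\times V$ together with the definition of $\widetilde{f_i}$, one has $(\widetilde{f_i})_x=f_i(x)$ for every $x\in V_\gamma$, and ``$y\in f_i(x)$'' is equivalent to the $\Sigma_0$ condition ``$\lceil x,y\rceil\in\widetilde{f_i}$'' --- this last is exactly the displayed formula $\psi$ just before the lemma, and the lemma generalizes it from a single membership atom to an arbitrary $\Sigma_0$ formula.

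The decisive further ingredient --- and the step I expect to be the main obstacle --- is a \emph{decoding substitution}: to each $\Sigma_0$ formula $\chi$ in a distinguished slot $u$ (plus plain slots) one assigns a $\Sigma_0$ formula $\chi^{\mathrm{dec}}$ in a slot $p$ (plus the same plain slots) so that, whenever $p$ codes a Quine--Rosser pair with first coordinate $x$ and second coordinate $u$, one has $\chi(u,\ldots)\iff\chi^{\mathrm{dec}}(p,\ldots)$. This is possible because decoding a Quine--Rosser pair is a \emph{bounded-depth} operation. With $p=\lceil x,u\rceil=f_0[x]\cup f_1[u]$, where $f_0[x]$ is the part of $p$ not containing $\emptyset$ and $f_1[u]$ the part containing it, the atom ``$a\in u$'' unfolds to ``$\ex v\in p\,\bigl[\emptyset\in v\wedge a=s^{-1}[v\setminus\{\emptyset\}]\bigr]$'', and a bounded quantifier ``$\ex u'\in u\,\rho$'' reindexes to ``$\ex v\in p\,\bigl[\emptyset\in v\wedge\rho'\bigr]$'', where inside $\rho'$ the slot $u'$ now stands for $s^{-1}[v\setminus\{\emptyset\}]$; iterating, the decoding required one level deeper is a single application of $s^{-1}$, and a fixed number of levels down it becomes the identity, up to the finite shift $s$ performs on natural numbers, which is itself $\Sigma_0$ to undo. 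Since each decoding operation sends a set to a subset of a fixed finite iterate of $\bigcup$ applied to its argument, and bounded quantification over such finitely-many-levels-deep unions is itself $\Sigma_0$ (a finite disjunction of formulas all of whose quantifiers are bounded), the recursion $\chi\mapsto\chi^{\mathrm{dec}}$ stays within $\Sigma_0$ at every step and terminates. Verifying this in detail is essentially careful bookkeeping with the Quine--Rosser function, but it is where the real work lies.

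Granting the decoding substitution, the induction is routine. For $\varphi$ atomic, plain--plain atoms are unchanged, and the mixed atoms reduce to $\Sigma_0$ conditions by the facts above: e.g.\ ``$w_j\in v_i$'' becomes ``$\lceil x,w_j\rceil\in g_i$'', ``$v_i\in w_j$'' becomes ``$\ex c\in w_j\,(c=(g_i)_x)$'', ``$v_i=v_j$'' becomes ``the $x$-sections of $g_i$ and $g_j$ coincide'' (which is $\Sigma_0$ since one may quantify over $g_i,g_j$ in place of over all sets), and ``$v_i\in v_j$'' becomes ``$\ex p\in g_j$ such that $p$ codes a pair with first coordinate $x$ whose second coordinate equals $(g_i)_x$'', the final clause being $\Sigma_0$ via the decoding substitution. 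The Boolean steps are immediate: $\psi_{\no\theta}=\no\psi_\theta$ and $\psi_{\theta_0\wedge\theta_1}=\psi_{\theta_0}\wedge\psi_{\theta_1}$. For a bounded quantifier $\ex u\in t\,\theta$ (the $\all$ case follows by negation): if $t$ is a plain slot $w_j$, promote $u$ to a new plain slot, apply the inductive hypothesis to $\theta$, and set $\psi=\ex u\in w_j\,\psi_\theta$; if $t$ is a coded slot $v_i$, then since the bound $f_i(x)=(g_i)_x$ is not available as a term, quantify over the code and decode:
\[ \psi\ =\ \ex p\in g_i\,\bigl[\,p\text{ codes a pair with first coordinate }x\ \wedge\ (\psi_\theta)^{\mathrm{dec}}\,\bigr], \]
where $\psi_\theta$ is the formula the inductive hypothesis gives for $\theta$ (with $u$ a plain slot). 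As $p$ runs over the pairs in $\widetilde{f_i}$ with first coordinate $x$, its second coordinate runs over exactly $f_i(x)$, so the equivalence holds. Taking $m=1$ yields the lemma.
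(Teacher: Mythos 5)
The paper gives no proof of this lemma at all (it is explicitly left to the reader as straightforward), and your argument — induction on $\Sigma_0$ formulas with a strengthened hypothesis allowing plain parameter slots, using that $(\widetilde{f_i})_x=f_i(x)$ and that Quine--Rosser decoding is a bounded-depth, $\Sigma_0$-expressible operation — is exactly the routine induction the authors intend, and it is correct. The one genuinely delicate point, bounded quantifiers whose bound is a coded slot, is handled properly by quantifying over the pairs in $\widetilde{f_i}$ with first coordinate $x$ and applying your decoding substitution to the second coordinate, which keeps everything $\Sigma_0$.
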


We leave the straightforward proof to the reader.

\begin{dfn}For a transitive structure $M$ and $k\leq\om$,
 $\mathscr{E}_k(M)$ denotes the set of all $\Sigma_k$-elementary maps $j:M\to 
M$. So $\mathscr{E}_\om(M)=\mathscr{E}(M)$.
\end{dfn}

Now suppose $\beta$ is a successor ordinal and $j\in\mathscr{E}_0(V_\beta)$ and 
$j(V_\alpha)=V_{j(\alpha)}$ for each 
$\alpha<\beta$.
Let $\alpha+1<\beta$ and  $s\in V_{j(\alpha)+1}$.
The ultrafilter $\mathscr{F}$ over $V_{\alpha+1}$ \emph{derived 
from $j$ with seed $s$} is defined as follows: For $\sigma\sub V_{\alpha+1}$, 
set
\[ \sigma\in \mathscr{F}\iff s\in j(\sigma).\]
Note that $\mathscr{F}$ is principal iff $s\in\rg(j)$.

For $f:V_{\alpha+1}\to V_\beta$, we needn't have $f\in V_\beta=\dom(j)$,
but we define
\[ j(f):V_{j(\alpha)+1}\to V_\beta \]
to be the function $g$ such that $\widetilde{g}=j(\widetilde{f})$.

Let $U=\Ult(V_\beta,\mathscr{F})$.
Define the  \emph{natural factor map}
$\pi:U\to V_\beta$
by
\[ \pi([f])=j(f)(s).\]
Then $\pi$ is well-defined.
For if $[f]=[g]$
then there is $\sigma\in\mathscr{F}$ such that
\[ \all x\in\sigma\ [f(x)=g(x)], \]
so by Lemma \ref{lem:general_Sigma_0_substitution},
\[ \all x\in j(\sigma)\ [j(f)(x)=j(g)(x)],\]
and since $\sigma\in\mathscr{F}$, therefore $j(f)(s)=j(g)(s)$.
Similarly, $\pi:U\to\rg(\pi)$ is an isomorphism (with respect $\in^U$ and 
$\in$). In particular, in this case, $U$ is wellfounded.
However, without $\AC$, it is not immediate that $U$ is extensional.
That is, suppose $[f]\neq[g]$. To witness extensionality for $[f],[g]$,
we need some $h:V_{\alpha+1}\to V_\beta$ such that $[h]\in^U[f]$ iff 
$[h]\notin^U[g]$; that is, we need $\sigma\in\mathscr{F}$
such that $h(x)\in f(x)\Delta g(x)$ for all $x\in\sigma$ (where $\Delta$ 
denotes symmetric difference). Because $[f]\neq[g]$,
there is indeed $\sigma\in\mathscr{F}$ such that $f(x)\Delta g(x)\neq\emptyset$
for all $x\in\sigma$, but it is not clear whether there is a corresponding 
choice function (even on some smaller $\sigma'\in\mathscr{F}$).

\subsection{Successor rank-to-rank embeddings as ultrapowers}

In this section we sketch an alternate proof of Theorem 
\ref{tm:cumulative_periodicity}, one which is equivalent
to that presented already, but superficially different,
and maybe more standard for set theory. We will also consider
partial elementarity.

\begin{dfn}
Let $\eta$ be even and $j\in\mathscr{E}_0(V_{\eta+2})$ with 
$j(V_{\eta+1})=V_{\eta+1}$. Then $\mu_j$ denotes
the ultrafilter over $V_{\eta+1}$ derived from $j$ with seed $j\rest 
V_\eta$.\footnote{Note that by our flat pairing convention,
$j\rest V_\eta\in V_{\eta+1}$.}
That is,
\[ \mu_j=\{\sigma\sub V_{\eta+1}:j\rest V_\eta\in j(\sigma)\}.\qedhere\]
\end{dfn}

We will again define for all even ordinals $\delta$
a \emph{canonical extension} operation
$k\mapsto k^+$,
with domain $\mathscr{E}_1(V_\delta)$, such that
$k^+:V_{\delta+1}\to V_{\delta+1}$ (but $k^+$ is not claimed
to be elementary in general), and such that $k^+$
is the unique candidate for a $\Sigma_0$-elementary map $\ell:V_{\delta+1}\to 
V_{\delta+1}$ such that $k\sub\ell$ and $\ell(V_\delta)=V_\delta$.
The operation $k\mapsto k^+$, with domain $\mathscr{E}_1(V_\delta)$,
will be definable over $V_{\delta+1}$ without parameters, uniformly in $\delta$
(meaning that there is a formula
$\psi$ such that for all even $\delta$ and
$k\in\mathscr{E}_1(V_\delta)$ and
$x,y\in V_{\delta+1}$, we have
\[ k^+(x)=y\Leftrightarrow V_{\delta+1}\sats\psi(k,x,y),\]
noting  $k\in V_{\delta+1}$ by our flat pairing convention).
The definition of $k\mapsto k^+$ for $k\in\mathscr{E}_1(V_\delta)$,
and proof of its basic properties,
is by induction on $n<\om$, where
$\delta=\lambda+n$ for some limit ordinal $\lambda$.

If $\delta$ is a limit, then $k^+$ is defined as in
Definition \ref{CanonicalExtension}.

Suppose now that $\delta=\eta+2$ where $\eta$ is even.
Let $j\in\mathscr{E}_0(V_{\eta+2})$ with $j(V_{\eta+1})=V_{\eta+1}$; we want to 
define $j^+$ and prove some facts.\footnote{We will end
up seeing that it follows that $j\in\mathscr{E}_1(V_{\eta+1})$.}
Let $\mu=\mu_j$.
Let:
\begin{enumerate}[label=--]
 \item 
$U=\Ult(V_{\eta+2},\mu)$ and 
$i_\mu:V_{\eta+2}\to U$
be the ultrapower map,
\item $\widetilde{U}=\Ult(V_{\eta+3},\mu)$
and $\widetilde{i}_\mu:V_{\eta+3}\to\widetilde{U}$ be the ultrapower map.
\end{enumerate}
We will eventually show that $i_\mu=j$
and $j\sub\widetilde{i}_\mu$, and define $j^+=\widetilde{i}_\mu$.
We don't yet know $U,\widetilde{U}$ are extensional/wellfounded,
so  these ultrapowers are at the ``representation''
level (their elements are  equivalence classes $[f]_\mu$).

Consider the hull
\begin{equation}\label{eqn:H} H=\Hull^{V_{\eta+2}}(\rg(j)\cup\{j\rest 
V_\eta\}),\end{equation}
where
$\Hull^{M}(X)$,
for $X\sub M$, denotes the set of all $x\in 
M$
such that $x$ is definable over $M$ from parameters
in $X$.
The following claim is a typical feature of 
ultrapowers via a measure
derived from an embedding,
although part \ref{item:Ult_iso_Hull} only holds
because $j\rest V_\eta$ encodes enough information,
and for this it is crucial that the canonical
extension $(j\rest V_\eta)^+=j\rest V_{\eta+1}$,
and that this operation is definable over $V_{\eta+2}$ (in fact it is 
definable over 
$V_{\eta+1}$),
a fact we know by induction.

\begin{lem}\label{lem:Ult(V_eta+2,mu)} Recall $U=\Ult(V_{\eta+2},\mu)$
and $H$ is defined in \tu{(}\ref{eqn:H}\tu{)}. We have:
\begin{enumerate}
 \item\label{item:Ult_iso_Hull} $U$
 is extensional and wellfounded; moreover, $U\iso 
H=V_{\eta+2}$.
\item\label{item:i_mu=j} $i_\mu=j$, after we identify $U$ 
with its transitive collapse $V_{\eta+2}$.
\item\label{item:j_Sigma_1-elem} $j:V_{\eta+2}\to V_{\eta+2}$ is 
$\Sigma_1$-elementary.
\end{enumerate}
\end{lem}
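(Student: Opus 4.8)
The goal is Lemma \ref{lem:Ult(V_eta+2,mu)}, asserting that $U = \Ult(V_{\eta+2},\mu)$ is extensional and wellfounded with $U\iso H = V_{\eta+2}$, that $i_\mu = j$ after the collapse, and that $j$ is $\Sigma_1$-elementary. The plan is to route everything through the natural factor map $\pi : U \to V_{\eta+2}$ defined in \S\ref{subsec:ultrapowers} by $\pi([f]) = j(f)(s)$ with seed $s = j\rest V_\eta$, and to show that $\pi$ is a \emph{bijection} onto $H$, with $H = V_{\eta+2}$. Once $\pi$ is an $\in$-isomorphism onto a transitive set, extensionality, wellfoundedness, $i_\mu = j$, and $\Sigma_1$-elementarity of $j$ all follow by bookkeeping.

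\textbf{Step 1: $\pi$ is an $\in$-isomorphism onto $H$.} From \S\ref{subsec:ultrapowers} we already have that $\pi$ is well-defined and that $\pi : U \to \rg(\pi)$ is an isomorphism with respect to $\in^U$ and $\in$ (this is where wellfoundedness of $U$ modulo the collapse comes from). It remains to compute $\rg(\pi)$. That $\rg(\pi)\sub H$ is routine: $\pi([f]) = j(f)(s)$, and $j(f)$ is definable over $V_{\eta+2}$ from $\widetilde f \in \rg(j)$ (using that $k\mapsto\widetilde k$ and its inverse, and the operation $\widetilde h \mapsto h$, are $\Sigma_0$ by Lemma \ref{lem:general_Sigma_0_substitution}), and $s = j\rest V_\eta \in \rg(j)\cup\{s\}$, so $j(f)(s)\in\Hull^{V_{\eta+2}}(\rg(j)\cup\{s\}) = H$. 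For $H\sub\rg(\pi)$: given $x\in H$, write $x$ as $t^{V_{\eta+2}}(j(\vec a), s)$ for a term $t$ and parameters $\vec a$; then $x = \pi([f])$ where $f(u) = t^{V_{\eta+2}}(\vec a, u)$ — one must check $f$ is coded, i.e. $f : V_{\eta+1}\to V_{\eta+2}$, which holds since $\eta+1 < \eta+2$ and $\widetilde f\in V_{\eta+2}$. So $\rg(\pi) = H$.

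\textbf{Step 2: $H = V_{\eta+2}$, i.e. $\pi$ is onto.} This is the crux and I expect it to be the main obstacle. The idea, flagged in the text, is that the seed $s = j\rest V_\eta$ already "encodes enough information" — specifically that $(j\rest V_\eta)^+ = j\rest V_{\eta+1}$ by Theorem \ref{LocalPeriodicity} (since $\eta$ is even, $j\rest V_\eta$ is cofinal and $j\rest V_\eta \in \mathscr E_1(V_\eta)$, with the canonical-extension operation definable over $V_{\eta+1}$). Hence from the parameter $s$ together with $\rg(j)$ one recovers $j\rest V_{\eta+1}$ inside $H$, and therefore every $X\in V_{\eta+1} \sub H$. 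To get all of $V_{\eta+2}$: any $Y\in V_{\eta+2}$ is $Y = (a\mid b)$ for some $a, b\in V_{\eta+1}$ (every $Y\sub V_{\eta+1}$ is coded since there is a surjection $V_\eta\to V_{\eta+1}$ when $\eta$ is even... more precisely one argues via cofinality of $j$, $j^+(V_{\eta+1}) = V_{\eta+1}$ need not hold — rather one uses that $j\rest V_{\eta+1}$ is definable in $H$ and that $Y$ is definable over $V_{\eta+2}$ from $j\rest V_{\eta+1}$-type data, or directly that $H \elem V_{\eta+2}$ contains $V_{\eta+1}$ as an element and is transitive-collapse-trivial). Cleanest: $H \elem V_{\eta+2}$, $H$ is transitive (it contains $V_{\eta+1}$ as a member and every subset of $V_{\eta+1}$ coded from parameters in $H$, and it's closed downward), and $V_{\eta+1}\in H$ with $\pow(V_{\eta+1})^H$ computed correctly forces $H = V_{\eta+2}$. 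I would spend the most care making this downward-closure / "$H$ sees all codes" argument airtight, invoking Theorem \ref{LocalPeriodicity} for the key input $(j\rest V_\eta)^+ = j\rest V_{\eta+1}$.

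\textbf{Step 3: conclusions.} Extensionality of $U$ and wellfoundedness follow since $\pi$ is an $\in$-isomorphism onto the transitive set $V_{\eta+2}$; identify $U$ with $V_{\eta+2}$ via $\pi^{-1}$. For \ref{item:i_mu=j}: under this identification $i_\mu(x) = \pi^{-1}([c_x])$, and $\pi([c_x]) = j(c_x)(s) = j(x)$ (the constant function's code maps to the constant function with value $j(x)$, evaluated anywhere gives $j(x)$), so $i_\mu = j$. For \ref{item:j_Sigma_1-elem}: $\Sigma_0$-{\L}o\'s holds for $U$ by definition, and since $\pi$ is an $\in$-isomorphism, $\Sigma_0$ (hence also $\Sigma_1$, by the usual "witness on a smaller measure-one set" argument as in \S\ref{subsec:ultrapowers}, which needs no choice) statements transfer; composing with the collapse isomorphism shows $i_\mu = j$ is $\Sigma_1$-elementary. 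The remaining parts of the lemma about $\widetilde U$ and $j^+ = \widetilde i_\mu$ are handled by the analogous argument one level up, using Lemma \ref{lem:general_Sigma_0_substitution} for codes of functions into $V_{\eta+3}$.
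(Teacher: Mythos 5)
Your overall architecture — route everything through the factor map $\pi([f])=j(f)(j\rest V_\eta)$ and show it is an $\in$-isomorphism onto $H=V_{\eta+2}$ — is exactly the paper's. But there is a genuine gap at the point you yourself flag as the crux (Step 2), and none of the three sketches you offer closes it. The missing ingredient is a single identity: for \emph{any} $x\in V_{\eta+2}$ we have $x\sub V_{\eta+1}$ and, by injectivity of $j$,
\[ x=k^{-1}``j(x),\qquad\text{where }k=j\rest V_{\eta+1}=(j\rest V_\eta)^+ .\]
Since the canonical-extension operation is (by the induction on $n$ running through this section) definable over $V_{\eta+1}$, the map $k$ is definable over $V_{\eta+2}$ from the single parameter $j\rest V_\eta$, and $j(x)\in\rg(j)$; hence $x\in H$. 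Packaged as the function $f_x(\ell)=(\ell^+)^{-1}``x$ on $\mathscr{E}(V_\eta)$, the same identity gives $j(f_x)(j\rest V_\eta)=x$, which is the surjectivity of $\pi$. Your substitutes do not do this work: the surjection-$V_\eta\to V_{\eta+1}$/coding detour is a red herring (no such surjection is available or needed, and $j(V_{\eta+1})=V_{\eta+1}$ is already part of the hypothesis); and the ``cleanest'' version is circular, since ``$\pow(V_{\eta+1})^H$ is computed correctly'' is literally the assertion $V_{\eta+2}\sub H$ that you are trying to prove. Note also that recovering $j\rest V_{\eta+1}$ inside $H$ does \emph{not} by itself give $V_{\eta+1}\sub H$ ``and therefore'' $V_{\eta+2}\sub H$; in both cases you must combine $k$ with the parameter $j(x)\in\rg(j)$ via the displayed identity.

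A secondary problem is Step 3. $\Sigma_0$-{\L}o\'{s} does \emph{not} hold ``by definition'' — only the atomic case does, and in the choiceless setting even the bounded-quantifier steps require an argument (this is precisely why \S\ref{subsec:ultrapowers} and Lemma \ref{lem:when_j_extends_to_Sigma_0-elem_j^+} make such a fuss about $\mu$-uniformization one level up). Here it can be rescued, but only by again invoking the representation $y=j(f_y)(j\rest V_\eta)$ for every $y\in V_{\eta+2}$: the paper's proof of part \ref{item:j_Sigma_1-elem} takes a witness $y$ to $\exists z\,\varphi(j(x),z)$, rewrites this as the $\Sigma_0$ statement $\exists \ell\in V_{\eta+1}\,[\varphi(j(x),j(f_y)(\ell))]$ (witnessed by $\ell=j\rest V_\eta$), and pulls it back through the $\Sigma_0$-elementary $j$ using $j(V_{\eta+1})=V_{\eta+1}$ and Lemma \ref{lem:general_Sigma_0_substitution}. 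So the same identity you are missing in Step 2 is also what makes Step 3 go through; without it, both halves of your argument are incomplete.
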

\begin{proof}
Part \ref{item:Ult_iso_Hull}: We first show $H=V_{\eta+2}$. As noted above, 
from the parameter $j\rest 
V_\eta$,
$V_{\eta+2}$ can 
compute
$k=(j\rest V_\eta)^+=j\rest V_{\eta+1}$. Now let $x\in 
V_{\eta+2}$.
Then $x\sub V_{\eta+1}$ and
$x=k^{-1}``j(x)$,
and since $j(x)\in\rg(j)$, this suffices.\footnote{Note that
the proof actually shows that 
$V_{\eta+2}=\Hull_{\Sigma_1}^{V_{\eta+2}}(\rg(j)\cup\{j\rest V_\eta\})$,
where $\Hull_{\Sigma_1}^M(X)$ is defined like $\Hull^M(X)$,
except that it only consists of the $y\in M$ such that
for some $\vec{x}\in X^{<\om}$ and $\Sigma_1$ formula $\varphi$,
$y$ is the unique $y'\in M$ such that $M\sats\varphi(\vec{x},y')$.}

Let
$\pi:U\to V_{\eta+2}$ be the factor map
 $\pi([f]^{V_{\eta+2}}_\mu)=j(f)(j\rest V_\eta)$.
By \S\ref{subsec:ultrapowers}, $\pi$ is a well-defined
 $\in$-isomorphism $U\to\rg(\pi)$.
But then $\rg(\pi)=V_{\eta+2}$, because 
given
$x\in V_{\eta+2}$, let
\[ f_x:\mathscr{E}(V_\eta)\to V_{\eta+2} \]
be 
 $f_x(k)=(k^+)^{-1}``x$,
and note
$j(f_x)(j\rest V_\eta)=x$.

Part \ref{item:i_mu=j}:  We have $i_\mu(x)=[c_x]^{V_{\eta+2}}_\mu$.
But note  $\pi\com i_\mu=j$, because
\[ \pi([c_x]^{V_{\eta+2}}_\mu)=j(c_x)(j\rest V_\eta)=c_{j(x)}(j\rest 
V_\eta)=j(x), \]
since $j$ is elementary. But identifying $U$ with $V_{\eta+2}$, 
then $\pi=\id$, so $i_\mu=j$.

Part \ref{item:j_Sigma_1-elem}: Let $\varphi$ be $\Sigma_0$
and $x,y\in V_{\eta+2}$ with
 $V_{\eta+2}\sats\varphi(j(x),y)$.
We have $y=j(f_y)(j\rest V_\eta)$ where 
$f_y$ is as above.
So
\[ V_{\eta+2}\sats\exists k\in V_{\eta+1}\ [\varphi(j(x),j(f_y)(k))].\]
But since $j$ is $\Sigma_0$-elementary and $j(V_{\eta+1})=V_{\eta+1}$,
therefore
\[ V_{\eta+2}\sats\exists k\in V_{\eta+1}\ [\varphi(x,f_y(k))],\]
hence $V_{\eta+2}\sats\exists z\varphi(x,z)$, as desired.
\end{proof}

Having analyzed $j$ as an ultrapower map,
we now consider extending $j$ to 
$V_{\eta+3}$. Recall $\widetilde{U}=\Ult(V_{\eta+3},\mu)$
and $\widetilde{i}_\mu=i^{V_{\eta+3}}_\mu$.

\begin{dfn}
 Let $R\sub \mathscr{E}(V_\eta)\cross V$ be a relation.
 A \emph{$\mu$-uniformization of $R$}
 is a function $f:\mathscr{E}(V_\eta)\to V$ such that
 for  $\mu$-measure one many $k\in\mathscr{E}(V_\eta)$,
if there is $x$ such that $(k,x)\in R$ then $(k,f(k))\in 
R$.
\end{dfn}

\begin{rem}
 Note that the existence of
 $\mu$-uniformizations is a weak kind of choice principle.
\end{rem}

\begin{lem}\label{lem:when_j_extends_to_Sigma_0-elem_j^+}  We have:
 \begin{enumerate}
\item\label{item:Utilde_wfd} $\widetilde{U}$ is  wellfounded.
\item\label{item:Utilde_extensional_charac} The following are equivalent:
\begin{enumerate}[label=\tu{(}\alph*\tu{)}]
\item\label{item:Utilde_extensional} $\widetilde{U}$ is extensional,
\item\label{item:j_extends_Sigma_0} $j$ extends to a $\Sigma_0$-elementary
$\ell:V_{\eta+3}\to V_{\eta+3}$,
\item\label{item:Sigma_0_unif} for all $R\sub\mathscr{E}(V_\eta)\cross 
V_{\eta+2}$,
there is a $\mu$-uniformization of $R$.
\end{enumerate}
\item \label{item:when_extends_to_Sigma_0} If $\ell:V_{\eta+3}\to V_{\eta+3}$
is a $\Sigma_0$-elementary extension of $j$ then
identifying $\widetilde{U}$ with its transitive collapse,
we have $V_{\eta+2}\psub\widetilde{U}\sub 
V_{\eta+3}$ and
$\ell=\widetilde{i}_\mu$ and $\ell(V_{\eta+2})=V_{\eta+2}$.\footnote{The 
arxiv.org:v1
draft of this paper asserted here  ``$\widetilde{U}\psub V_{\eta+3}$,
and in fact $\mu\notin\widetilde{U}$'', but this was an oversight.
If $\ell$ is \emph{fully} elementary, this holds,
by Theorem \ref{EvenUndefinable}. And
the analogous statement holds with $\eta+2$ replaced by a limit;
see Theorem \ref{tm:no_def_Sigma_1_V_delta_to_V_delta}.
We are not sure in general.}
\end{enumerate}
\end{lem}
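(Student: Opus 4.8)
The plan is to handle the three parts roughly in the order (1), (3), (2), since (3) is largely a computation with the factor map that then feeds into the equivalence in (2). First I would set up the natural factor map $\widetilde\pi:\widetilde U\to V_{\eta+3}$ defined by $\widetilde\pi([f]^{V_{\eta+3}}_\mu)=j(f)(j\rest V_\eta)$, exactly as in \S\ref{subsec:ultrapowers}, and recall from there that $\widetilde\pi$ is well-defined (this uses Lemma \ref{lem:general_Sigma_0_substitution} applied to atomic formulas) and that $\widetilde\pi$ is $\in$-preserving and injective on the $\in^{\widetilde U}$-relation in the sense that $[f]\in^{\widetilde U}[g]\Rightarrow\widetilde\pi([f])\in\widetilde\pi([g])$, and conversely $[f]=[g]\Leftrightarrow\widetilde\pi([f])=\widetilde\pi([g])$. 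Injectivity of $\widetilde\pi$ on $=^{\widetilde U}$-classes together with the wellfoundedness of $\in$ on $V_{\eta+3}$ immediately gives part \ref{item:Utilde_wfd}: any infinite $\in^{\widetilde U}$-descending sequence would map under $\widetilde\pi$ to an infinite $\in$-descending sequence in $V_{\eta+3}$.

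For part \ref{item:when_extends_to_Sigma_0}, suppose $\ell:V_{\eta+3}\to V_{\eta+3}$ is a $\Sigma_0$-elementary extension of $j$. Since $\ell\supseteq j$ and $\ell$ is $\Sigma_0$-elementary, $\ell(V_{\eta+1})=V_{\eta+1}$ and $\ell(V_{\eta+2})=V_{\eta+2}$ (using that "$v=V_\gamma$" is preserved, as in Lemma \ref{lem:Sigma_1-elem_pres_rank}, noting $\ell$ agrees with $j$ on $V_{\eta+2}$ and fixes $V_{\eta+2},V_{\eta+1}$ as points). I would then check that $\ell$ factors as $\ell=\widetilde\pi\circ\widetilde i_\mu$: for $x\in V_{\eta+3}$, $\widetilde i_\mu(x)=[c_x]$ and $\widetilde\pi([c_x])=j(c_x)(j\rest V_\eta)=c_{\ell(x)}(j\rest V_\eta)=\ell(x)$, using that $j\subseteq\ell$ and $\ell$ is $\Sigma_0$-elementary so moves constant-function codes correctly (this is the same $\Sigma_0$-absoluteness via Lemma \ref{lem:general_Sigma_0_substitution}). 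One then argues $\widetilde\pi$ is \emph{surjective}, hence an isomorphism $\widetilde U\cong V_{\eta+3}$, as follows: given $z\in V_{\eta+3}$, $z\subseteq V_{\eta+2}$; define $g_z:\mathscr E(V_\eta)\to V_{\eta+3}$ by $g_z(k)=(k^+)^{-1}``z$ (recall $k^+:V_{\eta+1}\to V_{\eta+1}$ so $(k^+)^{-1}``z\subseteq V_{\eta+1}$, an element of $V_{\eta+2}\subseteq V_{\eta+3}$), and check $j(g_z)(j\rest V_\eta)=\bigl((j\rest V_\eta)^+\bigr)^{-1}``z=(j\rest V_{\eta+1})^{-1}``z=\ell^{-1}``z$; applying $\ell$ (equivalently, running the same computation one level up — here is where one uses that $\ell$ agrees with $\widetilde i_\mu$ composed with $\widetilde\pi$) gives $z$ in the range. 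Identifying $\widetilde U$ with $V_{\eta+3}$ via $\widetilde\pi$, $\widetilde\pi$ becomes the identity and $\ell=\widetilde i_\mu$; and $\ell(V_{\eta+2})=V_{\eta+2}$ was already noted, while $V_{\eta+2}\subsetneq\widetilde U=V_{\eta+3}$ trivially. (I would insert a remark matching the footnote: whether $\mu\in\widetilde U$ is a separate matter unless $\ell$ is fully elementary, in which case Theorem \ref{EvenUndefinable} applies.)

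For part \ref{item:Utilde_extensional_charac} I would prove $\ref{item:j_extends_Sigma_0}\Rightarrow\ref{item:Utilde_extensional}$, $\ref{item:Utilde_extensional}\Rightarrow\ref{item:j_extends_Sigma_0}$, and $\ref{item:Utilde_extensional}\Leftrightarrow\ref{item:Sigma_0_unif}$. The implication $\ref{item:j_extends_Sigma_0}\Rightarrow\ref{item:Utilde_extensional}$ is immediate from part \ref{item:when_extends_to_Sigma_0}: if $j$ extends to a $\Sigma_0$-elementary $\ell$ then $\widetilde U\cong V_{\eta+3}$ via $\widetilde\pi$, which is in particular extensional. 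Conversely, if $\widetilde U$ is extensional, then since it is also wellfounded by \ref{item:Utilde_wfd}, Mostowski's theorem applies; let $\ell=\widetilde i_\mu$ composed with the transitive collapse, which is $\Sigma_0$-elementary by the atomic-formula case of {\L}o\'s (which needs no choice), and check $\ell\supseteq j$ by comparing with $i_\mu=j$ from Lemma \ref{lem:Ult(V_eta+2,mu)} and the commutativity of the ultrapowers at levels $\eta+2$ and $\eta+3$ (the seed $j\rest V_\eta$ and the defining formulas are the same). Finally, the equivalence with \ref{item:Sigma_0_unif}: extensionality of $\widetilde U$ says precisely that whenever $[f]\neq^{\widetilde U}[g]$ there is $h:V_{\eta+1}\to V_{\eta+3}$ with $h(x)\in f(x)\,\Delta\,g(x)$ for $\mu$-almost all $x$; unwinding, given any relation $R\subseteq\mathscr E(V_\eta)\times V_{\eta+2}$ one can arrange such $R$ as an "$\in$ versus $\notin$" discrepancy of two functions (coding a set and its complement relative to $V_{\eta+2}$, using the flat pairing and that everything in sight lies in $V_{\eta+3}$), so a uniformizing $h$ on a measure-one set is exactly a $\mu$-uniformization of $R$, and conversely $\mu$-uniformizations witness extensionality. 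The main obstacle I anticipate is the bookkeeping in this last equivalence: one must be careful that the identification $\mathscr E(V_\eta)\leftrightarrow V_{\eta+1}$ implicit in "$\mu$ is a filter over $V_{\eta+1}$" is handled correctly (via $k\mapsto\widetilde k$ or $k\mapsto k\rest$ under the flat pairing), and that the functions witnessing extensionality are exactly those of the right type ($V_{\eta+1}\to V_{\eta+2}$, hence coded in $V_{\eta+3}$) so that the two notions line up on the nose rather than merely up to coding.
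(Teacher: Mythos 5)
Your central device --- a factor map $\widetilde\pi:\widetilde U\to V_{\eta+3}$, $\widetilde\pi([f])=j(f)(j\restriction V_\eta)$, shown to be an isomorphism onto $V_{\eta+3}$ --- does not exist in the form you need, and if it did, it would contradict the lemma as stated. First, for $f:V_{\eta+1}\to V_{\eta+3}$ the code $\widetilde f$ lies in $V_{\eta+3}\setminus V_{\eta+2}$ whenever $f$ takes values outside $V_{\eta+2}$, so $j(\widetilde f)$ is undefined ($\dom(j)=V_{\eta+2}$); a factor map on all of $\widetilde U$ presupposes an extension $\ell$ of $j$ to $V_{\eta+3}$, whose existence is precisely what part (2)(b) is about, so it cannot be invoked for part (1). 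The paper proves wellfoundedness quite differently: every $\in^{\widetilde U}$-predecessor of any $[f]$ is represented by a function into $V_{\eta+2}$ (since members of elements of $V_{\eta+3}$ lie in $V_{\eta+2}$), and the sub-ultrapower of such functions is isomorphic to $V_{\eta+2}$ by Lemma \ref{lem:Ult(V_eta+2,mu)}, so any $\in^{\widetilde U}$-descending chain enters a wellfounded set after one step.

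Second, your surjectivity argument fails, and for the reason the whole paper exists. For $z\in V_{\eta+3}$ with $z\not\subseteq V_{\eta+1}$, the function $g_z(k)=(k^+)^{-1}``z$ sees only $z\cap V_{\eta+1}$ (as $\rg(k^+)\subseteq V_{\eta+1}$), so it cannot recover $z$; and the correct analogue one level up, $G_z(k)=((k^+)^+)^{-1}``z$, yields $\ell(G_z)(j\restriction V_\eta)=((j\restriction V_{\eta+1})^+)^{-1}``\ell(z)$, which is not $z$ because $(j\restriction V_{\eta+1})^+\neq j\restriction V_{\eta+2}$ in general --- $\eta+1$ is odd, so $j\restriction V_{\eta+1}$ need not be cofinal. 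Were $\widetilde\pi$ surjective, $\widetilde U$ would always collapse to $V_{\eta+3}$, contradicting the footnote's observation that $\mu\notin\widetilde U$ when $\ell$ is fully elementary (Theorem \ref{EvenUndefinable}). Relatedly, your implication (a)$\Rightarrow$(b) is a gap: extensionality together with the atomic case of {\L}o\'{s} does not give $\Sigma_0$-elementarity of the collapsed ultrapower map, since the bounded-quantifier step of {\L}o\'{s} is exactly where choice enters. The paper routes everything through (c): it proves (b)$\Rightarrow$(c), then (c)$\Rightarrow$(extensionality and $\Sigma_0$-{\L}o\'{s}, hence (b)), and $\neg$(c)$\Rightarrow\neg$(a) via the function $f(k)=\{x:(k,x)\in R\}$, which has no $\in^{\widetilde U}$-member yet differs from $[c_\emptyset]$. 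Your sketch of the (a)$\leftrightarrow$(c) bookkeeping is in the right spirit, but the overall architecture must go through the uniformization principle rather than through a global factor map.
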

\begin{proof}
Part \ref{item:Utilde_wfd}: By Lemma \ref{lem:Ult(V_eta+2,mu)},
the part of the ultrapower formed by functions
with codomain $V_{\eta+2}$ is isomorphic to $V_{\eta+2}$.
It follows that $\widetilde{U}$ is wellfounded.

Part \ref{item:Utilde_extensional_charac}:
Suppose $j\sub\ell\in\mathscr{E}_0(V_{\eta+3})$.  We show
$\ell(V_{\eta+2})=V_{\eta+2}$. Clearly $\ell(V_{\eta+2})\sub V_{\eta+2}$, so
we just need
$V_{\eta+2}\sub\ell(V_{\eta+2})$. Let $x\in V_{\eta+2}$.
Then
$x=j(f_x)(j\rest V_\eta)$.
But
\[ V_{\eta+3}\sats\text{``}f_x(k)\in V_{\eta+2}\text{ for all 
}k\in\mathscr{E}(V_\eta)\text{''},\]
which is a $\Sigma_0$ statement of the parameters
$f_x,V_{\eta+2},\mathscr{E}(V_\eta)$, 
and therefore \[ V_{\eta+3}\sats\text{``}\ell(f_x)(k)\in\ell(V_{\eta+2})\text{
for all }k\in\ell(\mathscr{E}(V_\eta))\text{''},\]
but $j\sub\ell$, and it follows that
$x=\ell(f_x)(j\rest V_\eta)\in\ell(V_{\eta+2})$.

We next show that $\widetilde{i}_\mu=\ell$. For we know
$i_\mu=j$ already, so consider $X\in V_{\eta+3}\cut V_{\eta+2}$,
so $X\sub V_{\eta+2}$. Let $x\in V_{\eta+2}$.
Let
\[ D=\{k\in\mathscr{E}(V_\eta)\bigm|f_x(k)\in X\}.\]
Then $x\in\widetilde{i}_\mu(X)$ iff $D\in\mu$
iff $j\rest V_\eta\in j(D)=\ell(D)$ iff (by $\Sigma_0$-elementarity)
 $\ell(f_x)(j\rest V_\eta)\in\ell(X)$
iff $x=j(f_x)(j\rest V_\eta)\in\ell(X)$.

Now let us deduce that \ref{item:Sigma_0_unif} holds.
So let $R\sub\mathscr{E}(V_\eta)\cross V_{\eta+2}$
and let $D$ be the domain of $R$; that is,
\[ D=\{k\in\mathscr{E}(V_\eta)\bigm|\exists x\ [(k,x)\in R]\}.\]
We may assume $D\in\mu$, so $j\rest V_\eta\in j(D)$.
Now $R\in V_{\eta+3}$ and
\[ V_{\eta+3}\sats\all k\in D\ \exists x\in V_{\eta+2}\ [(k,x)\in R].\]
So by $\Sigma_0$-elementarity and since $\ell(V_{\eta+2})\sub V_{\eta+2}$
(in fact we have equality there),
\[ V_{\eta+3}\sats\all k\in \ell(D)\ \exists x\in V_{\eta+2}\ [(k,x)\in 
\ell(R)],\]
and since $D\in\mu$, therefore we can fix $x\in V_{\eta+2}$
such that $(j\rest V_\eta,x)\in\ell(R)$. We claim 
that $f_x$ is
a $\mu$-uniformization of $R$. For  suppose instead that
\[ C=\{k\in\mathscr{E}(V_\eta)\bigm|(k,f_x(k))\notin R\}\in\mu.\]
 Then $j\rest V_\eta\in j(C)=\ell(C)$,
and by
$\Sigma_0$-elementarity,
 $(j\rest V_\eta,\ell(f_x)(j\rest V_\eta))\notin \ell(R)$,
so $(j\rest V_\eta,x)\notin\ell(R)$, a contradiction.

Now assume \ref{item:Sigma_0_unif} holds ($\mu$-uniformization);
 we will show that $\widetilde{U}$ is extensional and $\Sigma_0$-{\L}o\'{s}' 
theorem 
 holds for $\widetilde{U}$, which implies 
 \[ \widetilde{i}_\mu:V_{\eta+3}\to\widetilde{U}\sub V_{\eta+3} \]
 is $\Sigma_0$-elementary, and therefore in fact
 $\widetilde{i}_\mu:V_{\eta+3}\to V_{\eta+3}$ is $\Sigma_0$-elementary.

 For extensionality, let $f,g:\mathscr{E}(V_\eta)\to V_{\eta+3}$
 be such that $[f]\neq[g]$; that is, 
 \[ D=\{k\in\mathscr{E}(V_\eta)\bigm|f(k)\neq g(k)\}\in\mu.\]
 Then define the relation
 \[ R=\{(k,x)\in\mathscr{E}(V_\eta)\cross V_{\eta+2}\bigm|x\in 
f(k)\Delta g(k)\}.\]
Note that for all $k\in D$,  there is $x$ with $(k,x)\in R$. So we can 
$\mu$-uniformize $R$
with some $h:\mathscr{E}(V_\eta)\to V_{\eta+2}$.
Since $\mu$ is an ultrafilter, either (i) for $\mu$-measure one many
$k$, we have $h(k)\in f(k)\cut g(k)$, or (ii) vice versa.
Suppose (i) holds. Then $[h]\in[f]$ and $[h]\notin[g]$,
verifying extensionality for $[f],[g]$.

It follows now that $\widetilde{U}$ is isomorphic
to some subset of $V_{\eta+3}$ (and we already know
$V_{\eta+2}\sub\widetilde{U}$). Now observe
that  the assumed
$\mu$-uniformization is enough for the usual
proof of $\Sigma_0$-{\L}o\'{s}' theorem to go through
(that is, the usual proof of {\L}o\'{s}' theorem, but just with respect 
to $\Sigma_0$ formulas).
It follows as usual that $\widetilde{i}_\mu$
is $\Sigma_0$-elementary as a map $V_{\eta+3}\to\widetilde{U}$,
and hence as a map $V_{\eta+3}\to V_{\eta+3}$, as desired.

Finally suppose that $\mu$-uniformization as in \ref{item:Sigma_0_unif}
fails; we will show that $\widetilde{U}$ is not extensional.
Let $R\sub\mathscr{E}(V_\eta)\cross V_{\eta+2}$
be a counterexample to $\mu$-uniformization.
We have the constant function $c_\emptyset$.
Define $f:\mathscr{E}(V_\eta)\to V_{\eta+3}$
by
\[ f(k)=\{x\bigm|(k,x)\in R\}.\]
Note that $f(k)\neq\emptyset$ for almost all $k$.
So $[f]\neq[c_\emptyset]$. 
But
 there is no $g$ such that $[g]\in[f]$,
 and therefore $\widetilde{U}$ is non-extensional
 with respect to $[f],[c_\emptyset]$.
 
Part \ref{item:when_extends_to_Sigma_0}: We already
saw these things in the proof of part \ref{item:Utilde_extensional_charac}. 
\end{proof}

\begin{dfn}[Canonical extension via ultrapowers]
Let $\eta$ be even.

For $j\in\mathscr{E}_1(V_{\eta+2})$, we define
 $j^+:V_{\eta+3}\to V_{\eta+3}$
as $j^+=\widetilde{i}_{\mu_j}$, as above.

 For $x\in V_{\eta+2}$, $f_x:\mathscr{E}(V_\eta)\to V_{\eta+2}$
is defined $f_x(k)=(k^+)^{-1}``x$ (really $f_x$ depends on $\eta$, but this 
should be clear in context).
\end{dfn}

\begin{rem}\label{rem:proof_2_cumulative_periodicity}
We now reprove Theorem \ref{tm:cumulative_periodicity},
by induction, using the canonical extension $j^+$ just defined.
The argument is essentially as before, so we just give a sketch.
Let $\lambda$ be a limit and $j:V_{\lambda+2}\to V_{\lambda+2}$
be elementary. Let $\mu=\mu_j$. By Lemma
\ref{lem:Ult(V_eta+2,mu)},
$V_{\lambda+2}=\Ult(V_{\lambda+2},\mu)$
and $j=i^{V_{\lambda+2}}_\mu$ is the ultrapower map.

We claim  $j$ is not definable over $V_{\lambda+2}$ from parameters.
For suppose
$j$ is definable over $V_{\lambda+2}$ from $p\in V_{\lambda+2}$.
Then $p\in\rg(j(f_p))$, since
\[ p=[f_p]^{V_{\lambda+2}}_\mu=j(f_p)(j\rest V_\lambda). \]
  One can now argue as in the proof of Theorem
\ref{EvenUndefinable} to reach a contradiction.

Next, if $\ell:V_{\lambda+3}\to V_{\lambda+3}$ is elementary
and $j=\ell\rest V_{\lambda+2}$,
then  $\ell=j^+$ by the preceding lemmas.
But $\mu\in V_{\lambda+3}$, and it is straightforward
to see that the ultrapower map
$j^+=\widetilde{i_\mu}=i^{V_{\lambda+3}}_\mu$ is definable over $V_{\lambda+3}$
from the parameter $\mu$, or equivalently, from
$j$. So $\ell$ is definable as desired.

Now suppose $j:V_{\lambda+4}\to V_{\lambda+4}$
is elementary. Let $\mu=\mu_j$
(the measure derived from $j$ with seed
$j\rest V_{\lambda+2}$).
Then since 
$j\rest V_{\lambda+3}=(j\rest V_{\lambda+2})^+$,
the lemmas give that $\Ult(V_{\lambda+4},\mu)=V_{\lambda+4}$
and $j$ is the ultrapower map, so like before, we get that $j$ is not definable 
from parameters. Etc.
\end{rem}

\subsection{Reinhardt ultrafilters}

Let $\lambda$ be even. One can abstract out a notion of filter which corresponds
precisely to elementary embeddings in $\mathscr{E}(V_{\lambda+2})$, and 
also filters which correspond to embeddings in 
$\mathscr{E}_{n+1}(V_{\lambda+2})$,
for each $n<\om$.
The filters below are over $V_{\lambda+1}$,
but one could consider instead filters over $\mathscr{E}(V_\lambda)$,
identifying $j\in\mathscr{E}(V_\lambda)$ with $\rg(j)$,
and  at a small abuse of notation,
we treat the two interchangeably.
Recall that $\trancl(M)$ denotes the transitive collapse of $M$.

\begin{dfn}[Reinhardt ultrafilters]
Let $\lambda$ be even and  $\mu$ be an ultrafilter over 
$V_{\lambda+1}$. We say that $\mu$ is:
 \begin{enumerate}
  \item \emph{rank-J\'onsson} iff
  $\sigma=\{A: A\elem V_\lambda\text{ and 
  }\trancl(A)=V_\lambda\}\in\mu$,
  \item \emph{fine} iff for each $x\in V_\lambda$, we have
 $\tau_x=\{A: x\in A\sub 
V_\lambda\}\in\mu$,
\item \emph{normal} iff for each $\left<\sigma_x\right>_{x\in V_\lambda}\sub 
\mu$,
the diagonal intersection
\[\Delta_{x\in V_\lambda}\sigma_x= \{A: A\sub V_\lambda\text{ and 
}A\in\sigma_x\text{ for each 
}x\in A\}\in \mu, 
\]
\item \emph{pre-Reinhardt} iff rank-J\'onsson, fine and normal,
\item \emph{$\Sigma_1^{\lambda+2}$-Reinhardt} iff pre-Reinhardt and every
$R\sub V_{\lambda+1}\cross V_{\lambda+1}$ can be $\mu$-uniformized,
\item \emph{$\Sigma_{n+2}^{\lambda+2}$-Reinhardt} iff
 pre-Reinhardt
and every
$R\sub V_{\lambda+1}\cross V_{\lambda+2}$
which is $\Pi_{n}$-definable over $V_{\lambda+2}$ from parameters
can be $\mu$-uniformized,
\item \emph{$\Sigma^{\lambda+2}_\om$-Reinhardt} iff
$\Sigma_{n+1}^{\lambda+2}$-Reinhardt for all $n<\om$.\qedhere
\end{enumerate}
\end{dfn}

 Note
 that if $x\in V_{\lambda+i}$, where $i\leq 1$, then 
$f_x:\mathscr{E}(V_\lambda)\to V_{\lambda+i}$.
\begin{lem}\label{lem:pre-Reinhardt}
Let $\mu$ be a pre-Reinhardt ultrafilter over $V_{\lambda+1}$.
Let $U=\Ult(V_{\lambda+1},\mu)$.
Then
$U$ is extensional, wellfounded and isomorphic to 
$V_{\lambda+1}$. Moreover, $[\id]_\mu=i_\mu``V_\lambda$ and
$[f_x]_\mu=x$ for each $x\in V_{\lambda+1}$.
\end{lem}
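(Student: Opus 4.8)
The plan is to produce an explicit $\in$-isomorphism $\Psi\colon(V_{\lambda+1},\in)\to(U,\in^U)$; since $V_{\lambda+1}$ is transitive and wellfounded, this at once yields that $U$ is extensional, wellfounded and isomorphic to $V_{\lambda+1}$, with $\Psi^{-1}$ the transitive collapse. I will take $\Psi(x)=[f_x]_\mu$, so that $[f_x]_\mu=x$ after the identification. First, the setup: by rank-J\'onsson-ness, for $\mu$-almost all $A$ we have $A\elem V_\lambda$ and $\trancl(A)=V_\lambda$, so $A$ has a Mostowski collapse $\pi_A\colon A\to V_\lambda$ whose inverse $k_A=\pi_A^{-1}$ lies in $\mathscr{E}(V_\lambda)$ (this is the identification of $A$ with $k_A$). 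Since $\lambda$ is even, $k_A\sub k_A^+$ by the theory of canonical extensions in \S\ref{sec:def_rank-to-rank}, and unwinding the definition one gets $f_x(A)=(k_A^+)^{-1}``x=\pi_A``(x\inter A)\sub V_\lambda$ for $\mu$-almost all $A$; thus $f_x$ is a genuine function $V_{\lambda+1}\to V_{\lambda+1}$, its values on the $\mu$-null set being irrelevant. Throughout I use the standard consequence of normality: any $F$ with $F(A)\in A$ for $\mu$-almost all $A$ is constant on a $\mu$-large set, with value in $V_\lambda$.

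Next I would check that $\Psi$ is an $\in$-embedding. Note every element of a member of $V_{\lambda+1}$ lies in $V_\lambda$, and $\pi_A$ is an injective $\in$-isomorphism with $\pi_A(w)=\pi_A``(w\inter A)$ for $w\in A$. If $x\in y$ (so $x\in V_\lambda$), fineness gives $x\in A$ for $\mu$-almost all such $A$, whence $f_x(A)=\pi_A(x)\in\pi_A``(y\inter A)=f_y(A)$, i.e.\ $\Psi(x)\in^U\Psi(y)$. If $x\neq y$, pick $w$ in their symmetric difference; then $w\in V_\lambda$, and for $\mu$-almost all $A$ one has $w\in A$ and $\pi_A(w)$ in exactly one of $f_x(A),f_y(A)$ by injectivity of $\pi_A$, so $\Psi(x)\neq\Psi(y)$. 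Conversely, if $\Psi(x)\in^U\Psi(y)$, then for $\mu$-almost all $A$ there is $z_A\in y\inter A$ with $\pi_A(z_A)=f_x(A)$, so $\pi_A``(z_A\inter A)=\pi_A``(x\inter A)$ and hence $z_A\inter A=x\inter A$; as $z_A\in A$, normality makes $A\mapsto z_A$ constant on a $\mu$-large set with value $z_0\in y$, and then $z_0\inter A=x\inter A$ for $\mu$-almost all $A$ forces $z_0=x$ (fineness on the symmetric difference). So $x=z_0\in y$, and $\Psi$ preserves and reflects both $\in$ and $=$.

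The main obstacle is surjectivity of $\Psi$ onto $U$. Fix $g\colon V_{\lambda+1}\to V_{\lambda+1}$. For $\mu$-almost all $A$, $g(A)\sub V_\lambda=\dom(k_A)$, so $g'(A):=k_A``g(A)$ is a subset of $A$. Put $x:=\{v\in V_\lambda:\{A:v\in g'(A)\}\in\mu\}\in V_{\lambda+1}$. It suffices to show $g'(A)=x\inter A$ for $\mu$-almost all $A$, since then $g(A)=\pi_A``g'(A)=\pi_A``(x\inter A)=f_x(A)$ for $\mu$-almost all $A$, i.e.\ $[g]_\mu=\Psi(x)$. The inclusion $x\inter A\sub g'(A)$ for $\mu$-almost all $A$ follows from normality applied to $\langle\sigma_v\rangle_{v\in V_\lambda}$ with $\sigma_v=\{A:v\in g'(A)\}$ for $v\in x$ and $\sigma_v=V_{\lambda+1}$ otherwise. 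For $g'(A)\sub x$ (recall $g'(A)\sub A$) for $\mu$-almost all $A$: were this to fail on a $\mu$-large set, choosing $v_A\in g'(A)\cut x$ there gives a regressive map $A\mapsto v_A$ (as $v_A\in g'(A)\sub A$), hence one constant on a $\mu$-large set with value $v_0$, and then $\{A:v_0\in g'(A)\}\in\mu$ puts $v_0\in x$, contradicting $v_0\notin x$. The decisive feature here --- the reason only pre-Reinhardtness, and no uniformization axiom or any form of $\AC$, is required --- is that $g'(A)\sub A$, so that every pointwise selection from $g'(A)$ is automatically regressive and thus subject to normality.

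This makes $\Psi$ an $\in$-isomorphism, which gives the first sentence of the conclusion together with $[f_x]_\mu=x$. Finally, for $[\id]_\mu=i_\mu``V_\lambda$: an element $[g]_\mu$ lies $\in^U$-below $[\id]_\mu$ iff $g(A)\in\id(A)=A$ for $\mu$-almost all $A$, i.e.\ iff $g$ is regressive, i.e.\ (by normality, and conversely by fineness) iff $g$ is constant on a $\mu$-large set with value some $a\in V_\lambda$, i.e.\ iff $[g]_\mu=[c_a]_\mu=i_\mu(a)$ for some $a\in V_\lambda$; since $U$ is extensional, this says precisely $[\id]_\mu=\{i_\mu(a):a\in V_\lambda\}=i_\mu``V_\lambda$. (As a byproduct, normality and fineness --- using that $\om\sub A$ whenever $A\elem V_\lambda$ --- show $\mu$ is countably complete, reconfirming wellfoundedness of $U$.)
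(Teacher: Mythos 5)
Your overall strategy is in substance the same as the paper's: both arguments show that $x\mapsto[f_x]_\mu$ is an $\in$-isomorphism of $V_{\lambda+1}$ onto $U$, with normality entering only through diagonal intersections; your well-definedness of $f_x$ via the collapse maps $\pi_A$, the preservation/reflection of $\in$ and $=$, and the computation of $[\id]_\mu$ all check out and parallel the paper. But one step does not survive in $\ZF$: in the surjectivity argument you ``choose $v_A\in g'(A)\cut x$'' for $\mu$-almost all $A$ and then press down. The existence of such a simultaneous selection is itself a choice principle --- it is exactly an instance of $\mu$-uniformization of the relation $R=\{(A,v):v\in g'(A)\cut x\}\sub V_{\lambda+1}\cross V_{\lambda+1}$, and $\mu$-uniformization is precisely the extra property separating $\Sigma_1^{\lambda+2}$-Reinhardt ultrafilters from the pre-Reinhardt ultrafilters the lemma assumes. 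Your parenthetical justification (``every pointwise selection from $g'(A)$ is automatically regressive and thus subject to normality'') addresses the wrong issue: normality disposes of a regressive selection once you have one, but nothing gives you one, and in the intended models $\AC$ fails badly at these ranks. As written, then, the argument proves the lemma only under a hypothesis stronger than pre-Reinhardtness.

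The repair is immediate and uses the same device as your first inclusion (and as the paper's extensionality computation, which sets $x=\{y\in V_\lambda:[f_y]\in^U[f]\}$ and makes a single diagonal intersection, never selecting anything): for $v\in V_\lambda\cut x$ the set $\{A:v\notin g'(A)\}$ is in $\mu$ (ultrafilter plus the definition of $x$), and taking $\sigma_v=V_{\lambda+1}$ for $v\in x$, the diagonal intersection gives, for $\mu$-almost all $A$, that no $v\in A\cut x$ lies in $g'(A)$; since $g'(A)\sub A$, this is $g'(A)\sub x$. A smaller point: your identity $f_x(A)=(k_A^+)^{-1}``x=\pi_A``(x\inter A)$ deserves a word when $\lambda$ is an even successor, where $k_A^+$ is the coded-pairs canonical extension; one must check $k_A^+(y)\notin V_\lambda$ for $y$ of rank $\lambda$, so no spurious preimages arise. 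This is true (and the paper glosses the analogous point), but you invoke the formula verbatim, so it should at least be flagged.
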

\begin{proof} We start by considering $V_\lambda$. Write $[f]=[f]_\mu$.
\begin{clm*} $\Ult(V_\lambda,\mu)=V_\lambda$
and $x=[f_x]$ for each $x\in V_\lambda$.\end{clm*}
\begin{proof}
Given $x,y\in V_\lambda$, we have
\[ ([f_x]\in^U[f_y]\Leftrightarrow x\in y)\text{ and }
([f_x]=^U[f_y]\Leftrightarrow x=y).\]
For by rank-J\'onssonness and fineness, for $\mu$-measure one many 
$k\in\mathscr{E}(V_\lambda)$, we have $x,y\in\rg(k)$,
and for all such $k$, note $f_x(k)=k^{-1}(x)$ and 
$f_y(k)=k^{-1}(y)$. This yields the  stated equivalences.
Now let $f:\mathscr{E}(V_\lambda)\to V_\lambda$.
We claim that there is $x\in V_\lambda$ such that $[f]=^U[f_x]$.
For supposing not, then for each $x\in V_\lambda$,
defining
\[ \sigma_x=\{k\in\mathscr{E}(V_\lambda):f(k)\neq f_x(k)\}, \]
we get $\sigma_x\in\mu$. So the diagonal intersection
\[ \sigma=\{k\in\mathscr{E}(V_\lambda):f(k)\neq f_{x}(k)\text{ 
for all }x\in k``V_\lambda\} \in\mu,\]
so $\sigma\neq\emptyset$. Let 
$k\in\sigma$
and $\bar{x}=f(k)\in V_\lambda$. Let $x=k(\bar{x})$. Then 
$f_x(k)=k^{-1}(x)=\bar{x}=f(k)$, a contradiction.
\end{proof}

Now let $x\in V_{\lambda+1}\cut V_\lambda$ and $y\in V_\lambda$.
The $[f_y]\in^U[f_x]$ iff $y\in x$,
because $y\in\rg(k)$ for $\mu$-almost every $k$.
Note also that $[f_x]\notin^U[f_y]$.
It also easily follows that if $x'\in V_{\lambda+1}\cut V_\lambda$ with $x'\neq 
x$ then $[f_x]\neq [f_{x'}]$ (consider $[f_y]$ for some $y\in x\Delta x'$).

For extensionality, let $f:\mathscr{E}(V_\lambda)\to 
V_{\lambda+1}$ and
$x=\{y\in V_\lambda:[f_y]\in^U[f]\}$.
We claim $[f]=^U[f_x]$.
To see this,
for each $y\in V_\lambda$,
let
\[ \sigma_y=\{k\in\mathscr{E}(V_\lambda):f_y(k)\in f_x(k)\Leftrightarrow
f_y(k)\in f(k)\}.\]
Note $\sigma_y\in\mu$. Let $\sigma\in\mu$ be the diagonal intersection, and 
note $f(k)=f_x(k)$ for each $k\in\sigma$. So $[f]=^U[f_x]$, as desired,
and extensionality follows easily.

The fact that $[\id]=i_\mu``V_\lambda$
 is a straightforward consequence of fineness and normality.
 The rest of the lemma now follows easily.
\end{proof}

\begin{rem}\label{rem:choice_princ}
We now characterize the elements of $\mathscr{E}_{n+1}(V_{\lambda+2})$
as the ultrapower maps given by $\Sigma_{n+1}^{\lambda+2}$-Reinhardt 
ultrafilters,
 and hence the elements of $\mathscr{E}(V_{\lambda+2})$ as the ultrapower 
maps via
$\Sigma_\om^{\lambda+2}$-Reinhardt ultrafilters.
Note that because of the $\mu$-uniformization aspect
of Reinhardt ultrafilters, the theorem shows that
weak choice principles follow from the existence
of appropriate elementary embeddings.\end{rem}

\begin{tm}
Let $\lambda$ be even and $n<\om$. Then:
\begin{enumerate}
 \item \label{item:filter_is_Reinhardt}
If $j\in\mathscr{E}_{n+1}(V_{\lambda+2})$ then
 $\mu_j$ is a
$\Sigma_{n+1}^{\lambda+2}$-Reinhardt ultrafilter
and $j=i^{V_{\lambda+2}}_{\mu_j}$.
\item\label{item:ult_map_is_elem} Let $\mu$ be  a 
$\Sigma_{n+1}^{\lambda+2}$-Reinhardt ultrafilter, $U=\Ult(V_{\lambda+2},\mu)$
and $j:V_{\lambda+2}\to U$ be the ultrapower map $j=i^{V_{\lambda+2}}_\mu$.
Then:
\begin{enumerate}[label=\tu{(}\alph*\tu{)}]
\item\label{item:U=V_lambda+2} $U$ is extensional and 
wellfounded, 
$U=V_{\lambda+2}$,
$\mu=\mu_j$,
$[\id]=j``V_\lambda$ and 
$x=[f_x]=j(f_x)(j`` 
V_\lambda)$ for each $x\in V_{\lambda+2}$.
\item\label{item:Sigma_n+1-elem} $j\in\mathscr{E}_{n+1}(V_{\lambda+2})$.
\end{enumerate}
 \end{enumerate}
\end{tm}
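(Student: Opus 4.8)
The plan is to prove the two directions in tandem, building on the ultrapower analysis of Lemmas~\ref{lem:Ult(V_eta+2,mu)}, \ref{lem:when_j_extends_to_Sigma_0-elem_j^+} and~\ref{lem:pre-Reinhardt} and tracking logical complexity throughout. For part~\ref{item:filter_is_Reinhardt}, fix $j\in\mathscr{E}_{n+1}(V_{\lambda+2})$. Since $j$ is $\Sigma_1$-elementary, Lemma~\ref{lem:Sigma_1-elem_pres_rank} gives $j(V_\alpha)=V_{j(\alpha)}$ for all $\alpha<\lambda+2$, and as $j$ is order-preserving on the ordinals with $j(\lambda+1)<\lambda+2$ we get $j(\lambda)=\lambda$ and $j(\lambda+1)=\lambda+1$, hence $j(V_\lambda)=V_\lambda$ and $j(V_{\lambda+1})=V_{\lambda+1}$. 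So $\mu=\mu_j$ is defined, with seed $s=j``V_\lambda$ (in the sense of \S\ref{sec:reinhardt_filters}), and Lemma~\ref{lem:Ult(V_eta+2,mu)} applies: $\Ult(V_{\lambda+2},\mu)\cong V_{\lambda+2}$, $j=i^{V_{\lambda+2}}_\mu$, and (from its proof) $[f_x]_\mu=x$ for $x\in V_{\lambda+2}$, after identifying the ultrapower with its transitive collapse. It remains to check that $\mu$ is $\Sigma_{n+1}^{\lambda+2}$-Reinhardt. Fineness is immediate, since $\tau_x$ is $\Sigma_0$-definable over $V_{\lambda+2}$ from $x$ and $V_\lambda$, so $j(\tau_x)=\{A:j(x)\in A\subseteq V_\lambda\}\ni s$. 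Rank-J\'onssonness is the same kind of argument, the point being that satisfaction over $V_\lambda$, and over any $A\subseteq V_\lambda$, is $\Sigma_0$-definable over $V_{\lambda+2}$ relative to the parameter $V_\lambda$ (the relevant satisfaction relations, and $V_\omega$, lie in $V_{\lambda+1}$, so these truth definitions use only bounded quantifiers), whence the set $\sigma$ of elementary substructures of $V_\lambda$ with transitive collapse $V_\lambda$ is $\Sigma_0$-definable over $V_{\lambda+2}$ from $V_\lambda$; so $j(\sigma)=\sigma$, and $s\in\sigma$ because $s$ is the range of the elementary embedding $j\rest V_\lambda:V_\lambda\to V_\lambda$. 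Normality follows by the usual ``the seed is a generator'' computation: for $\langle\sigma_x\rangle_{x\in V_\lambda}\subseteq\mu$, the diagonal intersection is $\Sigma_0$-definable from its code and $V_\lambda$ and $j$ commutes with sections, so $s\in j(\Delta_{x\in V_\lambda}\sigma_x)$ reduces to $s\in j(\sigma_y)$ for all $y\in V_\lambda$, which holds since each $\sigma_y\in\mu$. Finally, for the uniformization clause let $R$ be $\Pi_{n-1}$-definable over $V_{\lambda+2}$ from parameters (or, when $n=0$, an arbitrary subset of $V_{\lambda+1}\times V_{\lambda+1}$); we may assume $\dom(R)\in\mu$, so $s\in j(\dom R)$. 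The biconditional defining $\dom(R)$ from $R$ is $\Pi_{n+1}$ (when $n=0$ because the witness $y$ may be sought inside $V_{\lambda+1}$, a $\Sigma_0$ matter), hence preserved by $j$, so there is $y^*$ with $(s,y^*)\in j(R)$; writing $y^*=j(f_{y^*})(s)$, I claim $f_{y^*}$ is a $\mu$-uniformization of $R$. Otherwise $C=\{k:(k,f_{y^*}(k))\notin R\}\in\mu$, and since $k\mapsto f_{y^*}(k)=(k^+)^{-1}``y^*$ is $\Sigma_0$-definable, $j$ preserves the definition of $C$, so $s\in j(C)$ gives $(s,j(f_{y^*})(s))=(s,y^*)\notin j(R)$, a contradiction.

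For part~\ref{item:ult_map_is_elem}, fix a $\Sigma_{n+1}^{\lambda+2}$-Reinhardt ultrafilter $\mu$, put $U=\Ult(V_{\lambda+2},\mu)$ and $j=i^{V_{\lambda+2}}_\mu$. First I would establish~\ref{item:U=V_lambda+2}. As $\mu$ is pre-Reinhardt, Lemma~\ref{lem:pre-Reinhardt} gives $\Ult(V_{\lambda+1},\mu)\cong V_{\lambda+1}$ with $[\id]=i_\mu``V_\lambda$ and $[f_x]=x$ for $x\in V_{\lambda+1}$; wellfoundedness of $U$ then follows as in Lemma~\ref{lem:when_j_extends_to_Sigma_0-elem_j^+}\ref{item:Utilde_wfd}. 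For extensionality of $U$, given $[f]\neq[g]$ with $f,g:V_{\lambda+1}\to V_{\lambda+2}$, the relation $\{(k,z):z\in f(k)\Delta g(k)\}$ is $\Sigma_0$-definable over $V_{\lambda+2}$ from the codes of $f,g$ and has codomain inside $V_{\lambda+1}$, so the $\Sigma_1^{\lambda+2}$-Reinhardt clause supplies a $\mu$-uniformization, which witnesses extensionality for $[f],[g]$. The factor map $\pi([f])=j(f)(j``V_\lambda)$ is then an $\in$-isomorphism of $U$ onto all of $V_{\lambda+2}$ (since $x=\pi([f_x])$ for every $x\in V_{\lambda+2}$, just as in Lemma~\ref{lem:Ult(V_eta+2,mu)}), so $U=V_{\lambda+2}$, $[\id]=j``V_\lambda$, and $x=[f_x]=j(f_x)(j``V_\lambda)$; and $\mu=\mu_j$ on unwinding the definitions. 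For~\ref{item:Sigma_n+1-elem} I would argue in two steps. First, prove $\Sigma_n$-{\L}o\'{s}' theorem for $U$ by induction on formula complexity; the only nonroutine step is the existential quantifier, where the witnessing function is precisely a $\mu$-uniformization of a relation that is $\Pi_m$-definable over $V_{\lambda+2}$ for some $m\leq n-1$ (of codomain inside $V_{\lambda+1}$ at the bounded-quantifier step --- there an arbitrary subset of $V_{\lambda+1}\times V_{\lambda+1}$ --- and of codomain $V_{\lambda+2}$ otherwise), exactly what $\Sigma_{n+1}^{\lambda+2}$-Reinhardtness provides; $\Sigma_n$-{\L}o\'{s}' theorem at once yields that $j=i_\mu$ is $\Sigma_n$-elementary. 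Second, boost by one level as in Lemma~\ref{lem:Ult(V_eta+2,mu)}\ref{item:j_Sigma_1-elem}: given a $\Pi_n$ formula $\psi$ and $\vec x\in V_{\lambda+2}$ with $V_{\lambda+2}\models\exists w\,\psi(w,j(\vec x))$, choose a witness $w=j(f_w)(j``V_\lambda)$, so that $V_{\lambda+2}\models\exists k\in V_{\lambda+1}\,\psi(j(f_w)(k),j(\vec x))$; as ``$\exists k\in V_{\lambda+1}\,\psi(g(k),\vec v)$'' is still $\Pi_n$ in the code of $g$ and in $\vec v$ (a bounded quantifier does not raise complexity) and $j(V_{\lambda+1})=V_{\lambda+1}$, pulling this back through the $\Sigma_n$-elementarity of $j$ gives $V_{\lambda+2}\models\exists k\in V_{\lambda+1}\,\psi(f_w(k),\vec x)$, hence $V_{\lambda+2}\models\exists w\,\psi(w,\vec x)$. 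So $j$ is $\Sigma_{n+1}$-elementary, as required.

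The main obstacle I expect is the complexity bookkeeping, and in particular the apparent ``off-by-one'' between {\L}o\'{s}' theorem and the Reinhardt clause: the $\Sigma_{n+1}$ instance of {\L}o\'{s}' theorem, taken at face value, would require $\mu$-uniformization for $\Pi_n$-definable relations, whereas $\Sigma_{n+1}^{\lambda+2}$-Reinhardtness grants it only for $\Pi_{n-1}$-definable ones. The resolution --- and the reason $\mathscr{E}_{n+1}(V_{\lambda+2})$ corresponds to $\Sigma_{n+1}^{\lambda+2}$-Reinhardt ultrafilters rather than to $\Sigma_n^{\lambda+2}$ or $\Sigma_{n+2}^{\lambda+2}$ ones --- is the free ``$+1$'' of elementarity coming from the ultrapower-from-$j$ representation, exploited in the second step above. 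Carrying this out requires checking that the coding apparatus of \S\ref{subsec:coding}--\S\ref{subsec:ultrapowers}, together with the auxiliary objects $\mathscr{E}(V_\lambda)$, the operation $k\mapsto k^+$, the maps $f_x$, and the satisfaction relations used for rank-J\'onssonness, are all $\Sigma_0$-definable over the relevant $V_\beta$ relative to the parameter $V_\lambda$, so that $\Sigma_{n+1}$-elementarity genuinely transports them under $j$; with flat pairing in force this is routine, but it must be done carefully enough that the complexity counts above are valid.
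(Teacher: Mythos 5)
Your part (1) and part (2)(a) follow essentially the paper's own route: the normality computation at the seed, the witness-at-the-seed argument for the uniformization clause (the paper works with the defining $\Pi_{n-1}$ formula and the $j$-images of its parameters rather than writing ``$j(R)$'', since for $n\geq 1$ the relation $R\sub V_{\lambda+1}\cross V_{\lambda+2}$ is not an element of $V_{\lambda+2}$; but the argument is the same), and, for (2)(a), Lemma \ref{lem:pre-Reinhardt}, the $\Sigma_1^{\lambda+2}$ clause for extensionality and $\Sigma_0$-{\L}o\'{s}, and the surjectivity of $x\mapsto[f_x]$ --- where, as in the paper's proof, you still owe the measure-one computation that $y\in\rg(k^+)$ for $\mu$-almost all $k$, which is what actually gives $[f_x]=x$ for $x\in V_{\lambda+2}$.

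The genuine gap is in your boost step for (2)(b). You pull the statement ``$\exists k\in V_{\lambda+1}\,\psi(g(k),\vec v)$'', with $\psi\in\Pi_n$, backwards through the $\Sigma_n$-elementarity of $j$, on the grounds that a bounded quantifier does not raise complexity. Literally that formula is $\Sigma_{n+1}$ (an existential in front of a $\Pi_n$ block), and the standard fact that $\Pi_n$ is closed under bounded existential quantification is a theorem of $\ZF$ whose proof uses Collection; $V_{\lambda+2}$ is nowhere near a model of Collection, and the failure of precisely such collection/choice principles at this level is what the $\mu$-uniformization clauses exist to compensate for. A $\Sigma_n$-elementary map preserves $\Sigma_{n+1}$ statements upward but not downward, so the pullback as stated is unjustified; the ``free $+1$'' of part \ref{item:j_Sigma_1-elem} of Lemma \ref{lem:Ult(V_eta+2,mu)} works only because a bounded existential in front of a $\Sigma_0$ matrix is still $\Sigma_0$, and that is special to the bottom level. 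The step can be repaired in two ways, both with resources you already have. The paper argues the needed direction by contradiction: if $D=\{k: V_{\lambda+2}\sats\psi(f_w(k),\vec x)\}\notin\mu$, one $\mu$-uniformizes, on the complement $E\in\mu$, the $\Pi_{n-1}$-definable relation coming from the matrix of $\neg\psi$ (exactly the $\Sigma_{n+1}^{\lambda+2}$ clause), obtaining $g$ on some $E'\in\mu$; the resulting statement ``$\forall k\in E'(\dots)$'' is genuinely $\Pi_n$, is transferred upward by $\Sigma_n$-elementarity, and evaluating at the seed gives the contradiction. Alternatively, your Step 1 already suffices: given a witness $w=[g]_\mu$ for $\exists w\,\psi(w,j(\vec x))$, if $\forall w\,\neg\psi(w,\vec x)$ held, then $\neg\psi(g(k),\vec x)$ would hold for every $k$, so $\Sigma_n$-{\L}o\'{s} applied to the $\Sigma_n$ formula $\neg\psi$ (together with $U=V_{\lambda+2}$ and $j=i_\mu$) would give $V_{\lambda+2}\sats\neg\psi(w,j(\vec x))$, a contradiction --- no quantifier absorption needed. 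As written, though, the justification you gave for the boost does not stand.
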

\begin{proof}
Part \ref{item:filter_is_Reinhardt}: Let $\mu=\mu_j$. Rank-J\'onssonness and 
fineness
are straightforward. Consider normality, and fix 
$\vec{\sigma}=\left<\sigma_x\right>_{x\in V_\lambda}\sub\mu$, and let 
$\left<\sigma'_x\right>_{x\in V_\lambda}=j(\vec{\sigma})$.
 Let $B=\Delta_{x\in V_\lambda}\sigma_x$.
We must see that
\[ j`` V_\lambda\in j(B)=\Delta_{x\in V_\lambda}\sigma'_x.\]
But if $y\in j``V_\lambda$ then $y=j(x)$ for some $x\in V_\lambda$,
and $\sigma_x\in\mu$, so
$j``V_\lambda\in j(\sigma_x)=\sigma'_{y}$,
as desired.

Now let $U=\Ult(V_{\lambda+2},\mu)$.
By Lemma \ref{lem:Ult(V_eta+2,mu)}, $U=V_{\lambda+2}$ and
$j=i^{V_{\lambda+2}}_{\mu}$. 
Let us verify that $\mu$ is $\Sigma_1^{\lambda+2}$-Reinhardt.
Let $R\sub V_{\lambda+1}\cross V_{\lambda+1}$ and $D\in\mu_j$
such that for all $k\in D$, there is $x\in V_{\lambda+1}$ with $(k,x)\in 
R$. Then by $\Sigma_1$-elementarity and since $j\rest V_\lambda\in j(D)$,
 there is $x\in V_{\lambda+1}$ with $(j\rest V_\lambda,x)\in j(R)$.
Fix such an $x$. We have  $x=j(f_x)(j\rest V_\lambda)$.
So letting $D'$ be the set of all $k\in D$
such that $(k,f_x(k))\in R$, then $D'\in\mu$, so we are done.

Now suppose $j\in\mathscr{E}_{n+2}(V_{\lambda+2})$
and let $\psi$ be a $\Pi_n$ formula, $p\in V_{\lambda+2}$
and $D\in\mu$, such that for all $k\in D$, there is $x\in V_{\lambda+2}$ with 
$V_{\lambda+2}\sats\psi(p,k,x)$. The assertion
``$\all k\in D\ \exists x\ \psi(p,k,x)$''
is $\Pi_{n+2}$ in parameters $D,p$. So by $\Sigma_{n+2}$-elementarity and since 
$j\rest V_\lambda\in j(D)$,
we can fix $x\in V_{\lambda+2}$ such that
\[ V_{\lambda+2}\sats\psi(j(p),j\rest 
V_\lambda,x).\] Let $D'$ be the set of all $k\in D$
where $V_{\lambda+2}\sats\psi(p,k,f_x(k))$. We claim
 $D'\in\mu$, giving the desired $\mu$-uniformization.
So suppose otherwise. Then $E=\mathscr{E}(V_\lambda)\cut D'\in\mu$,
and $V_{\lambda+2}\sats\all k\in E\ [\neg\psi(p,k,f_x(k))]$.
But then by $\Sigma_{n+1}$-elementarity and since $j\rest V_\lambda\in j(E)$, 
we get $V_{\lambda+2}\sats\neg\psi(j(p),j\rest V_\lambda,x)$,
a contradiction.

Parts \ref{item:ult_map_is_elem}\ref{item:U=V_lambda+2}:
 By Lemma \ref{lem:pre-Reinhardt},
we already know $V_{\lambda+1}=\Ult(V_{\lambda+1},\mu)$
(including extensionality and 
wellfoundedness) and $x=[f_x]$ for all $x\in V_{\lambda+1}$.
Note that it follows that $U=\Ult(V_{\lambda+2},\mu)$ is wellfounded
(though we haven't yet shown extensionality).

Now  $\mu$ is $\Sigma_1^{\lambda+2}$-Reinhardt. Using this, extensionality
is just like in the proof of Lemma \ref{lem:when_j_extends_to_Sigma_0-elem_j^+}.
So we identify $U$ with its Mostowski collapse,
 so $V_{\lambda+1}\sub U\sub V_{\lambda+2}$.
Similarly to extensionality, $\Sigma_0$-{\L}o\'{s}' theorem holds. The 
$\Sigma_1$-elementarity
of $j:V_{\lambda+2}\to U$ follows:
 if $U\sats\exists w\varphi(j(x),w)$ where $\varphi$ is 
$\Sigma_0$, then there is $f$ with $U\sats\varphi(j(x),[f])$, and by
$\Sigma_0$-{\L}o\'{s},
$V_{\lambda+2}\sats\varphi(x,f(k))$ for $\mu$-measure one many $k$,
so $V_{\lambda+2}\sats\exists w\varphi(x,w)$.
And because $[\id]=j``V_\lambda$ by Lemma \ref{lem:pre-Reinhardt},
it is easy to see that $\mu=\mu_j$
(although we haven't shown that $U=V_{\lambda+2}$,
we can still define $\mu_j$ as before).

To see $U=V_{\lambda+2}$,
it suffices to see that $[f_x]=x$ for each $x\in V_{\lambda+2}$,
and for this, given $y\in 
V_{\lambda+1}$, we must see that
 $[f_y]\in^U[f_x]$ iff $y\in x$. 
To see the latter, it suffices to show that $y\in\rg(k^+)$ for $\mu$-measure 
one many $k$,
because for all such $k$, we have $y\in x$ iff
\[ f_k(y)=(k^+)^{-1}(y)\in(k^+)^{-1}``x=f_k(x).\]

Let $D$ be the set of all $k\in \mathscr{E}(V_\lambda)$
such that $y\in\rg(k^+)$. Then since $j$ is $\Sigma_1$-elementary
and $j(V_{\lambda+1})=V_{\lambda+1}$,
$j(D)$ is the set of all $k\in\mathscr{E}(V_\lambda)$
such that $j(y)\in\rg(k^+)$. But 
$j\rest 
V_{\lambda+1}=(j\rest V_\lambda)^+$,
so $j\rest V_\lambda\in j(D)$, so $D$ is $\mu$-measure one,
as desired.

Finally, we already have $[\id]=j``V_\lambda$,
and  $x=[f_x]$ for each $x\in V_{\lambda+2}$.
But then like in the proof of Lemma \ref{lem:Ult(V_eta+2,mu)},
 the factor map $\pi:U\to V_{\lambda+2}$,
defined $\pi([f_x])=j(f_x)(j\rest 
V_\lambda)$, is surjective and in fact the identity,
so $x=j(f_x)(j\rest V_\lambda)$.

Part \ref{item:ult_map_is_elem}\ref{item:Sigma_n+1-elem}:
For $n=0$, this was 
verified above. So suppose $m<\om$ and
$\mu$ is $\Sigma_{m+2}^{\lambda+2}$-Reinhardt;
we show $j$ is $\Sigma_{m+2}$-elementary.
Let $\varphi$ be $\Pi_{m+1}$ and suppose that 
$V_{\lambda+2}\sats\varphi(j(x),y)$. We have $y=[f_y]$.
Let $D$ be the set of all $k\in\mathscr{E}(V_\lambda)$
such that $V_{\lambda+2}\sats\varphi(x,f_y(k))$.
It suffices to see that $D\in\mu$, so suppose 
$E=\mathscr{E}(V_\lambda)\cut D\in\mu$.
Let $\psi$ be a $\Pi_m$ formula
such that
\[ \neg\varphi(u,v)\iff\exists 
w\ \psi(u,v,w).\]
So $V_{\lambda+2}\sats\all k\in E\ \exists 
w\ \psi(x,f_y(k),w)$. Since $\mu$ is $\Sigma_{m+2}^{\lambda+2}$-Reinhardt,
there is $E'\in\mu$ and $g:E'\to V_{\lambda+2}$
such that $V_{\lambda+2}\sats\all k\in E'\ \psi(x,f_y(k),g(k))$.
By induction, $j$ is $\Sigma_{m+1}$-elementary, and as
$y=j(f_y)(j\rest V_\lambda)$
and $j\rest V_\lambda\in j(E')$, we get 
$V_{\lambda+2}\sats\psi(j(x),y,j(g)(j\rest V_\lambda))$,
so $V_{\lambda+2}\sats\neg\varphi(j(x),y))$, contradiction.
\end{proof}

\section{$\Sigma_1$-elementarity at limit 
rank-to-rank}\label{sec:limit_Sigma_1_elementarity}

It is natural to ask whether we can prove a version of
Theorem \ref{tm:cumulative_periodicity} when we assume less than full 
elementarity
of the maps.
Here we focus on the limit case; the successor case is less clear.
It is easy to see that if we only demand $\Sigma_0$-elementarity,
then the embedding can easily be definable from parameters:
\begin{exm}
 Assume $\ZFC$, let $\mu$ be a normal measure
 and $j:V\to\Ult(V,\mu)$ be the ultrapower map,
 and identify $\Ult(V,\mu)$ with transitive
 $M\sub V$. Then note that in fact, $j:V\to V$ is $\Sigma_0$-elementary
 and definable from the parameter $\mu$. So $j$
might even be definable without parameters.\end{exm}

We  now consider the case that $\delta$ is a limit and
$j\in\mathscr{E}_1(V_\delta)$.
We need some more standard set theoretic notions,
but expressed appropriately for the $\ZF$ context.

\begin{dfn} Let $\kappa\in\OR$. We say $\kappa$ is \emph{inaccessible}
iff whenever $\alpha<\kappa$ and $\pi:V_\alpha\to\kappa$,
then $\rg(\pi)$ is bounded in $\kappa$.
The \emph{cofinality} $\cof(\kappa)$ of $\kappa$
is the least $\eta\in\OR$ such that there is a map
$\pi:\eta\to\kappa$ with $\rg(\pi)$ unbounded in $\kappa$.
We say $\kappa$ is \emph{regular} iff $\cof(\kappa)=\kappa$.

A \emph{norm} on a set  $X$ is a surjective function $\pi:X\to\eta$
for some $\eta\in\OR$.
The associated \emph{prewellorder} on $X$ is the relation
$R$ on $X$ given by
$xRy$ iff $\pi(x)\leq\pi(y)$.
One can also axiomatize prewellorders on $X$
as those relations $R$ on $X$ which are linear, total,
reflexive, with wellfounded strict part
(the strict part is the relation $x<_Ry$ iff $[xRy\text{ and }\neg yRx]$).

If $\kappa$ is regular  but non-inaccessible,
and $\alpha\in\OR$ is least such that there is a cofinal
map $\pi:V_\alpha\to\kappa$,
then the \emph{Scott ordertype} of 
$\kappa$, denoted $\scot(\kappa)$,\footnote{This is
an abbreviation of \emph{Sc}ott \emph{o}rder\emph{t}ype.
The second author thanks Asaf Karagila for suggesting
this terminology.}
is the set of all prewellorders of $V_\alpha$ whose ordertype is $\kappa$.
\end{dfn}

\begin{rem}
 Suppose $\kappa$ is regular but not inaccessible,
 and let $\alpha$ be as above and $\pi:V_\alpha\to\kappa$
 be cofinal. Then $\rg(\pi)$ has ordertype
 $\kappa$, as otherwise $\kappa$ is singular. 
 Moreover, $\alpha$ is a successor ordinal.
 For otherwise, by the minimality of $\alpha$,
 we get a cofinal function $f:\alpha\to\kappa$
 by defining $f(\beta)=\sup(\pi``V_\beta)$
 for $\beta<\alpha$, again contradicting regularity.
\end{rem}

\begin{dfn}
 Let $\delta$ be a limit
 and $j\in\mathscr{E}_1(V_\delta)$. For 
$A\sub V_\delta$, 
define $j^+(A)$ just as in Definition \ref{CanonicalExtension}.
 Define $j_0=j$ and for $n\geq 0$ define $j_{n+1}=j^+(j_n)$.
Say $x\in V_\delta$ 
 is \emph{$(j,n)$-stable} iff $j_m(x)=x$ for all $m\in[n,\om)$.
 
 Say that $j$ is \emph{nicely stable} iff either
(i) $\delta$ is inaccessible, or
 (ii) $\delta$ is singular and $j(\cof(\delta))=\cof(\delta)$, or
 (iii) $\delta$ is regular non-inaccessible and 
$j(\scot(\delta))=\scot(\delta)$.

For $j:V_\delta\to V_\delta$ and $A,B\sub V_\delta$,
say $j:(V_\delta,A)\to(V_\delta,B)$ is 
\emph{\tu{(}$\Sigma_n$-\tu{)}elementary}
iff $j$ is ($\Sigma_n$-)elementary in the language $\Ll_{\mathrm{\dot{A}}}$,
with $\dot{A}$ interpreted by the predicates $A,B$ respectively.
\end{dfn}

Before we state the next theorem, we state a corollary in advance:

\begin{cor}
 Let $\delta\in\Lim$ and
 $j\in\mathscr{E}_1(V_\delta)$ be nicely stable. 
 Then $j\in\mathscr{E}(V_\delta)$. In fact,
$j:(V_\delta,A)\to(V_\delta,j^+(A))$
 is fully elementary  for every $A\sub V_\delta$.
\end{cor}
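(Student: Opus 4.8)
The plan is to reduce everything to the reflection argument underlying Fact~\ref{fact:ZF_Sigma_1-elem_cofinal_implies_full_elem}, run with an extra predicate. First I would observe that it suffices to prove the second (``in fact'') assertion: that $j:(V_\delta,A)\to(V_\delta,j^+(A))$ is fully elementary in the language with the extra predicate $\dot A$, for every $A\sub V_\delta$; taking $A=\emptyset$ (and noting $j^+(\emptyset)=\emptyset$) recovers $j\in\mathscr{E}(V_\delta)$. So fix $A\sub V_\delta$. Since $\delta$ is a limit, $j^+(A)=\bigcup\{j(a):a\in V_\delta,\ a\sub A\}$ by Definition~\ref{CanonicalExtension}, and from this together with the $\Sigma_0$-elementarity of $j$ and $j(V_\alpha)=V_{j(\alpha)}$ (Lemma~\ref{lem:Sigma_1-elem_pres_rank}) one checks the key local identity $j^+(A)\cap V_{j(\alpha)}=j(A\cap V_\alpha)$ for every $\alpha<\delta$, and in particular $x\in A\iff j(x)\in j^+(A)$ for $x\in V_\delta$; hence $j$ is at least $\Sigma_0$-elementary in the $\dot A$-language. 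Moreover $j$ is automatically $\in$-cofinal as a map $V_\delta\to V_\delta$, since $j(\alpha)\geq\alpha$ for all ordinals $\alpha$ and so $\sup_{\alpha<\delta}j(\alpha)=\delta$.

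Next I would set up the engine: induction on $n$, showing that $j$ is $\Sigma_n$-elementary in the $\dot A$-language, mimicking the proof of Fact~\ref{fact:ZF_Sigma_1-elem_cofinal_implies_full_elem}. Given the $\in$-cofinality just noted, the only ingredient the step from $\Sigma_n$ to $\Sigma_{n+1}$ needs is a \emph{reflecting family}: a collection of proper substructures of $(V_\delta,\in,A)$, cofinal in $V_\delta$ (every element lies in one of them), each $\Sigma_n$-elementary in $(V_\delta,\in,A)$, and closed under $j$ in the sense that $j$ maps each member into another member. Pulling back a $\Sigma_{n+1}$-witness found ``above $j(x)$'' is then done inside such a substructure exactly as in that proof, the local identity above being used to carry the predicate along. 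When $\delta$ is inaccessible (case (i)) this is free: $(V_\delta,\in,A)\models\ZF(\dot A)$ (inaccessibility yields Replacement even with the added predicate), so the class $S_n=\{\alpha<\delta:(V_\alpha,\in,A\cap V_\alpha)\elem_n(V_\delta,\in,A)\}$ is club in $\delta$, and by the inductive hypothesis together with $j(V_\alpha)=V_{j(\alpha)}$ one gets $j[S_n]\sub S_n$; the family $\{(V_\alpha,A\cap V_\alpha):\alpha\in S_n\}$ does the job, and the argument is the relativization to $\ZF(\dot A)$ of Fact~\ref{fact:ZF_Sigma_1-elem_cofinal_implies_full_elem}.

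In the remaining cases $V_\delta$ need not model $\ZF$, so $S_n$ can be bounded in $\delta$, and the reflecting family must be built by hand from the preserved cofinality data. In case (ii) I would fix a cofinal increasing $c:\gamma\to\delta$ with $\gamma=\cof(\delta)$; since $j(\gamma)=\gamma$ (so $j[\gamma]\sub\gamma$), one constructs a $\gamma$-indexed increasing continuous chain $\langle N_\xi\rangle_{\xi<\gamma}$ of $\Sigma_n$-elementary substructures of $(V_\delta,\in,A)$ with $\bigcup_{\xi<\gamma}N_\xi=V_\delta$ and $j[N_\xi]\sub N_{\xi'}$ for a suitable $\xi'\geq\xi$, using a sufficiently canonical choice of $c$ so that $j$ acts equivariantly on the chain. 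In case (iii), let $\beta$ be least such that there is a prewellorder $R$ of $V_{\beta+1}$ of ordertype $\delta$ (so $\beta+1<\delta$ and $\scot(\delta)\in V_\delta$), and use $j(\scot(\delta))=\scot(\delta)$ to select $R$ with $j(R)$ order-compatible with $R$; this gives a $\delta$-cofinal, $j$-equivariant reflecting family indexed along $R$ in the analogous fashion. With the reflecting family in hand the induction goes through as in case (i).

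The hard part is exactly this last construction, in cases (ii) and (iii): producing a family that is simultaneously cofinal in $V_\delta$, $\Sigma_n$-elementary, and $j$-equivariant. The naive choice --- Skolem hulls of the $V_{c(\xi)}$ --- can be unbounded in $\delta$ precisely when $\delta$ is also ``small'' (for instance surjected onto by some $V_{\beta+1}$), so one must work to keep the hulls proper while retaining equivariance, and it is here that the hypotheses $j(\cof(\delta))=\cof(\delta)$ and $j(\scot(\delta))=\scot(\delta)$ enter in an essential way. (The hypothesis cannot be discarded even for the predicate clause alone: for a general $A$, and even when $\delta$ is merely worldly, $(V_\delta,A)$ need not model $\ZF(\dot A)$, so the naive reflection fails, and canonical extensions of elementary embeddings are in general not elementary, as already noted after Definition~\ref{CanonicalExtension}.)
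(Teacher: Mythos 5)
Your reduction to the relativized statement, the local identity $j^+(A)\cap V_{j(\alpha)}=j(A\cap V_\alpha)$, and the inaccessible case are all fine and match the paper. But in cases (ii) and (iii) there is a genuine gap: the entire content of the theorem in those cases is the construction you defer as ``the hard part,'' and the reflecting-family strategy you sketch runs into an obstacle you do not address. In $\ZF$ without Choice one cannot in general produce \emph{proper} $\Sigma_n$-elementary substructures of $(V_\delta,A)$ by Skolem hulls (there are no definable Skolem functions, and the definable closure of a set need not satisfy the Tarski--Vaught criterion), while the only canonically available substructures, the levels $(V_\alpha,A\cap V_\alpha)$, can have their $\Sigma_n$-reflecting indices bounded in $\delta$ precisely when $\delta$ is not inaccessible. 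So the $j$-equivariant cofinal chain $\langle N_\xi\rangle$ is not merely technical to arrange; there is no evident way to produce even a single suitable $N_\xi$ that is not a $V_\alpha$. (A further issue, even granting the family: the pull-back step needs the \emph{images} of the $N_\xi$ to be $\Sigma_n$-elementary in the target structure $(V_\delta,j^+(A))$, which in the proof of Fact \ref{fact:ZF_Sigma_1-elem_cofinal_implies_full_elem} is secured by the parameter-free definability of the class $C_n$ inside a model of $\ZF$ --- a uniformity that is lost for a hand-built family.)

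The paper's proof (part \ref{item:j^n_fully_elem} of Theorem \ref{tm:an_iterate_is_elementary}) avoids elementary substructures altogether in the non-inaccessible cases. For $\Sigma_2$-elementarity it fixes a $\Pi_1$ matrix $\varphi$ and considers the \emph{decreasing} family of witness-candidate sets $B_\xi=\{z\in V_\beta:(V_{f(\xi)},A\cap V_{f(\xi)})\models\varphi(x,z)\}$, indexed either by $\gamma=\cof(\delta)<\delta$ or along a prewellorder of ordertype $\delta$; the hypotheses $j(\cof(\delta))=\cof(\delta)$, resp.\ $j(\scot(\delta))=\scot(\delta)$, are used exactly to show that $j$ commutes with the intersection $\bigcap_\xi B_\xi$ (which lies in $V_\delta$), so that nonemptiness of $\bigcap_\xi j(B_\xi)$ pulls back. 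Full elementarity is then obtained not by repeating a reflection argument at each $n$, but by a bootstrap: adjoin the $\Sigma_{k-1}$-theory $T$ of $(V_\delta,A)$ as a further predicate, check $j(T)$ is the corresponding theory of $(V_\delta,j^+(A))$ using $\Sigma_k$-elementarity, and apply the already-established $\Sigma_2$-elementarity relative to $(A,T)$ to get $\Sigma_{k+1}$-elementarity relative to $A$. Both of these ideas are absent from your proposal, and without them (or a genuine substitute for your deferred construction) the argument does not go through.
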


\begin{tm}[An iterate is elementary]\label{tm:an_iterate_is_elementary}
Let $\delta\in\Lim$ and 
$j\in\mathscr{E}_1(V_\delta)$.\footnote{Recall that by 
Lemma \ref{lem:Sigma_1-elem_pres_rank},
$j(V_\alpha)=V_{j(\alpha)}$ for each $\alpha<\delta$.}
 Then:
 \begin{enumerate}
  \item\label{item:j^n_pres_Sigma_1-elem} Every $j_n:V_\delta\to V_\delta$ is 
$\Sigma_1$-elementary; in fact, $j_n:(V_\delta,A)\to(V_\delta,j_n^+(A))$
is $\Sigma_1$-elementary for every $A\sub V_\delta$.
\item\label{item:j^n=} $j_{n+1}=j_n^+(j_n)$.
\item\label{item:x_stable} If $x\in V_\delta$ and $j_n(x)=x$ then $x$ is 
$(j,n)$-stable.
  \item\label{item:alpha_gets_stable} For each $\alpha<\delta$ there is $n<\om$ 
such that $\alpha$
  is $(j,n)$-stable.
  \item\label{item:P_xi_gets_stable} For each $\alpha<\delta$ and $\xi\in\OR$,
  letting $P$ be the set of all prewellorders of $V_\alpha$
of length $\xi$, there is $n<\om$ such that
$P$ is $(j,n)$-stable.
\item\label{item:j_n_nicely_stable_exists} There is $n<\om$ such that $j_n$ is 
nicely stable.
 \item\label{item:j^n_fully_elem} 
Suppose  $j_n$ is nicely stable.
 Then $j_n\in\mathscr{E}(V_\delta)$.
In fact,  for each $A\sub V_\delta$,
the map $j_n:(V_\delta,A)\to(V_\delta,j_n^+(A))$ is fully elementary.
 \end{enumerate}
\end{tm}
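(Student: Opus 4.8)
The plan is to prove (1)--(3) together by induction on $n$, deduce (4)--(6), and finally establish (7); throughout, the point that each $j_n\rest V_\beta\in V_\delta$ (for $\beta<\delta$) is what keeps the argument inside structures we can act on. Concretely I would prove by induction on $n$ that $j_n\rest V_\beta\colon V_\beta\to V_{j_n(\beta)}$ is $\Sigma_1$-elementary for every $\beta<\delta$, and more generally $j_n\rest V_\beta\colon(V_\beta,A\cap V_\beta)\to(V_{j_n(\beta)},j_n^+(A)\cap V_{j_n(\beta)})$ for every $A\subseteq V_\delta$, together with $j_{n+1}=j_n^+(j_n)$ (part (2)) and part (3). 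This yields (1), since $V_\delta\models\exists y\,\psi(y,\vec x)$ with $\psi$ a $\Sigma_0$ formula reflects to $V_\beta\models\exists z\in V_\beta\,\psi(z,\vec x)$ for a suitable $\beta<\delta$ by $\Delta_0$-absoluteness, the reflection also working in the language with a predicate interpreted by $A\cap V_\beta\in V_\delta$, using $j(A\cap V_\beta)=j^+(A)\cap V_{j(\beta)}$ (intersection is $\Delta_0$, and $j(V_\beta)=V_{j(\beta)}$ by Lemma~\ref{lem:Sigma_1-elem_pres_rank}). The base case $n=0$ is the hypothesis on $j$. For the step, one first proves the shift identity $j_{n+1}=j_n^+(j_n)$, i.e.\ $\bigcup_\alpha j(j_n\cap V_\alpha)=\bigcup_\alpha j_n(j_n\cap V_\alpha)$ --- the fiddly computational heart, recording how $j$ and $j_n$ agree on the coded pieces $j_n\cap V_\alpha$ (in essence, $j_{n+1}=j$ on $\rg(j_n)$ in the appropriate sense). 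Granting it, $j_{n+1}\rest V_{j_n(\alpha)}=j_n(j_n\rest V_\alpha)$; and since ``the coded function $s$ respects the $\Sigma_0$ formula $\psi$ as a map $(V_\alpha,A\cap V_\alpha)\to(V_\gamma,\cdot)$'' is a $\Delta_0$ property of the parameters $V_\alpha,V_\gamma,s,A\cap V_\alpha\in V_\delta$, applying the inductively $\Sigma_1$-elementary $j_n$ to the true instance $s=j_n\rest V_\alpha$ shows $j_{n+1}\rest V_{j_n(\alpha)}$ again respects $\psi$; as $j_n(\alpha)$ ranges cofinally in $\delta$, this gives (1) at $n+1$. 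Part (3) falls out of the shift identity: if $j_m(x)=x$ then $\lceil x,x\rceil\in j_m\cap V_\alpha$ for all large $\alpha$ and $j_m(\lceil x,x\rceil)=\lceil j_m(x),j_m(x)\rceil=\lceil x,x\rceil$ by $\Sigma_0$-elementarity, so $\lceil x,x\rceil\in j_m^+(j_m)=j_{m+1}$, i.e.\ $j_{m+1}(x)=x$; induct on $m\geq n$.

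\textbf{Stabilization and nice stability ((4)--(6)).} By (3), $\crit(j_n)$ is non-decreasing (each $j_{n+1}$ fixes everything $j_n$ does, in particular every ordinal below $\crit(j_n)$), and $\langle j_n\rangle_{n<\om}$ is definable over $V_{\delta+1}$ from $j$. The crux of (4) is $\sup_n\crit(j_n)=\delta$ (if some $j_n=\id$ it is trivial); granting it, each $\alpha<\delta$ lies below some $\crit(j_n)$ and so is $(j,n)$-stable by (3). I would obtain $\sup_n\crit(j_n)=\delta$ by a minimality argument in the style of Theorem~\ref{EvenUndefinable}/Suzuki: if the supremum were $\lambda^*<\delta$, the least ordinal never eventually fixed would be definable over $V_{\delta+1}$ from $j$, and comparing critical points of the $\Sigma_1$-elementary maps definable over $V_\delta$ from the coded pieces of $j$ and the $j_n$ would contradict the minimality of $\crit(j)$. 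Part (5) is proved similarly (with the relevant set $P$ of prewellorders an element of $V_\delta$, definable over $V_\delta$ from $\alpha$ and $\xi$ in the main case), using (4) and elementarity. Part (6) is then immediate: if $\delta$ is inaccessible every $j_n$ is nicely stable; if $\delta$ is singular then $\cof(\delta)<\delta$ is eventually fixed by (4); if $\delta$ is regular non-inaccessible then $\scot(\delta)$ is eventually fixed by (5).

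\textbf{The elementarity upgrade (7).} Here $i:=j_n$ is $\Sigma_1$-elementary and $\in$-cofinal, since $i(\gamma)\geq\gamma$ for every ordinal $\gamma<\delta$ so that $\rg(i)$ meets every rank of $V_\delta$. I would boost $i$ to full elementarity --- in the form $i\colon(V_\delta,A)\to(V_\delta,i^+(A))$ for every $A\subseteq V_\delta$ --- by induction on quantifier complexity, exactly as in Fact~\ref{fact:ZF_Sigma_1-elem_cofinal_implies_full_elem} and its $\ZF(\dot{A})$ relativization: from a class $C_k$ of $\beta<\delta$, cofinal in $\delta$ and closed under $i$, with $(V_\beta,A\cap V_\beta)\elem_k(V_\delta,A)$ --- closure under $i$ being checked from $\Sigma_k$-elementarity inductively, as in that Fact --- the $\in$-cofinality of $i$ upgrades $\Sigma_k$- to $\Sigma_{k+1}$-elementarity. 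The only ingredient not free in $\ZF$ is that $C_k$ is cofinal in $\delta$, i.e.\ the relevant reflection, and this is precisely what ``nicely stable'' supplies: when $\delta$ is inaccessible, $(V_\delta,A)\models\ZF(\dot{A})$ and Montague--L\'evy reflection applies directly; when $\delta$ is singular with $i(\cof(\delta))=\cof(\delta)$, a cofinal map $c\colon\cof(\delta)\to\delta$ --- carried compatibly by $i$ since $i$ fixes $\cof(\delta)$ --- stratifies $V_\delta$ and lets one close off a cofinal $i$-invariant $C_k$ stage by stage; when $\delta$ is regular non-inaccessible with $i(\scot(\delta))=\scot(\delta)$, the minimal cofinal prewellorder $\pi$ of some $V_\alpha$ ($\alpha<\delta$) of ordertype $\delta$ plays this role, $i$ fixing the Scott ordertype forcing $i$ to send $\pi$ to another such prewellorder. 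The corollary stated immediately before the theorem is the special case $n=0$ of (7).

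\textbf{Main obstacles.} I expect two genuinely hard points. First, $\sup_n\crit(j_n)=\delta$ in (4): the naive attempts (the critical points strictly increase, or each ordinal is fixed by continuity of some $j_n$ at a limit) do not close, and one seems to need the definability of $\langle j_n\rangle_n$ over $V_{\delta+1}$ together with a Suzuki-type minimality argument. Second --- and I think this is the crux --- the reflection step in (7) in the two choiceless cases, where $V_\delta\not\models\ZF$ and the stability of $\cof(\delta)$, resp.\ of $\scot(\delta)$, must be made to do the work of Replacement; this is exactly where the hypothesis of nice stability earns its keep.
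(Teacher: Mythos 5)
Your treatment of parts (1)--(3) is essentially sound and close to the paper's (the paper obtains (2) and (3) more compactly from the left-distributivity identity $j(k(l))=(j(k))(j(l))$ for the extended application, but your direct computation with the coded pieces $j_n\inter V_\alpha$ amounts to the same thing). There are, however, two genuine errors later on. For part (4), you reduce the statement to the claim that $\sup_{n<\om}\crit(j_n)=\delta$, and that claim is false in general: applying $j$ to the true statement ``$j_n\rest V_\beta$ has critical point $\crit(j_n)$'' gives $\crit(j_{n+1})=j(\crit(j_n))$, so $\crit(j_n)=\kappa_n(j)$ and $\sup_n\crit(j_n)=\kappa_\om(j)$, which can lie far below $\delta$ --- for instance for the embeddings $k:V_\alpha\to V_\alpha$ with $\kappa_\om(k)=\lambda<\alpha$ produced by Theorem \ref{tm:lambda_rank-Berkeley}, or for $j\rest V_{\lambda+\om}$ when $j(\lambda)=\lambda$. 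Part (4) is about \emph{eventual fixed points}, and an embedding fixes many ordinals far above all of the $\crit(j_n)$; no minimization of critical points can substitute for this, and your sketched Suzuki-style argument has no definable family to minimize over anyway. The correct argument is the linear-iterability one: take the least non-stable $\alpha$, write $\alpha=\bigcup_{n<\om}A_n$ with $A_n=\{\beta<\alpha:j_n(\beta)=\beta\}$, note $\left<A_n\right>_{n<\om}\in V_\delta$ and (by (2)) $j(A_n)=\{\beta<j(\alpha):j_{n+1}(\beta)=\beta\}$, so $j(\alpha)=\bigcup_n j(A_n)$; since $\alpha<j(\alpha)$, some $j_{n+1}$ fixes $\alpha$, a contradiction. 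Part (5), and hence (6), is a variant of this and needs the same repair.

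For part (7), your method requires, for each $k$, cofinally many $\beta<\delta$ with $(V_\beta,A\inter V_\beta)\elem_k(V_\delta,A)$. That is L\'evy--Montague reflection inside $V_\delta$, which is available only when $\delta$ is inaccessible; for singular $\delta$ such levels need not exist at all. Concretely, if $\delta=\beta_0+\om$ (e.g.\ $\delta=\lambda+\om$ with $j(\lambda)=\lambda$, where $j\rest V_\delta$ is $\Sigma_1$-elementary and nicely stable), then $V_\delta\sats\all n\in\om\ \exists x\ [x=V_{\beta_0+n}]$ while $V_{\beta_0+k}$ fails this, so no $V_\beta$ with $\beta_0<\beta<\delta$ is $\Sigma_2$-elementary in $V_\delta$. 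No ``closing off stage by stage'' along a cofinal map can manufacture reflecting rank initial segments that do not exist. The paper's argument in the singular and regular non-inaccessible cases is structurally different: to get $\Sigma_2$-elementarity one forms the $\sub$-decreasing sequence of witness sets $B_\xi$ indexed along $\cof(\delta)$ (respectively, along a cofinal prewellorder of some $V_\alpha$ of ordertype $\delta$), uses $j(\cof(\delta))=\cof(\delta)$ (respectively $j(\scot(\delta))=\scot(\delta)$) to see that $j$ of the intersection equals the intersection of the images and is therefore nonempty, and then bootstraps from $\Sigma_k$ to $\Sigma_{k+1}$ by adjoining the $\Sigma_{k-1}$-theory of $(V_\delta,A)$ as an additional predicate rather than by locating elementary levels $V_\beta$.
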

\begin{proof}
 For this proof we just write $j(A)$ instead of $j^+(A)$, and $j_n(A)$ instead 
of $j_n^+(A)$, for $A\sub V_\delta$. Note this is unambiguous when $A\in 
V_\delta$.

 Part \ref{item:j^n_pres_Sigma_1-elem}:
 Let $\alpha<\delta$ and $\alpha'=j(\alpha)$ and $j'=j\rest V_\alpha$.
 So $j':V_\alpha\to V_{\alpha'}$ is fully elementary.
 This fact is preserved by $j$, by $\Sigma_1$-elementarity.
 Clearly also $j(j):V_\delta\to V_\delta$, and
 is therefore $\Sigma_0$-elementary with respect to these models.
 But $j(j)$ is also $\in$-cofinal, hence $\Sigma_1$-elementary (with respect 
to 
$\in$).

For the $\Sigma_1$-elementarity of
$j_n:(V_\delta,A)\to(V_\delta,j_n(A))$,
let $x\in V_\delta$ and $\varphi$ be $\Sigma_0$
(in the expanded language), and suppose
\[ (V_\delta,j_n(A))\sats\exists y\ \varphi(j_n(x),y).\]
Let $\alpha<\delta$ be sufficiently large that $x\in V_\alpha$ and
\[ (V_{j_n(\alpha)},j_n(A)\inter V_{j_n(\alpha)})\sats\exists y\ 
\varphi(j_n(x),y).\]
Then by the $\Sigma_1$-elementarity of $j_n$ (just in the language
with $\in$),
\[ (V_\alpha,A\inter V_\alpha)\sats\exists y\ \varphi(x,y), \]
so
$(V_\delta,A)\sats\exists y\ \varphi(x,y)$
as desired.

Part \ref{item:j^n=}:
For $n=0$ this is just the definition.
For $n=1$ note that:
\[ j_2=j(j_1)=j(j(j))=(j(j))(j(j))=j_1(j_1).\]
The rest is similar.
 
 Part \ref{item:x_stable}: If $x=j(x)$ then
 $j(x)=j(j(x))=j(j)(j(x))=j(j)(x)$.
 
 Part \ref{item:alpha_gets_stable}:
 Suppose not and let $\alpha<\delta$ be least otherwise.
 We use the argument in \cite{super_rein}, which is just a slight variant on
 the standard proof of linear iterability.
 For $n<\om$ let
 $A_n=\{\beta<\alpha: j_n(\beta)=\beta\}$.
 So $\alpha=\bigcup_{n<\om}A_n$
 and $\left<A_n\right>_{n<\om}\in V_\delta$.
Note $j(A_n)=\{\beta<j(\alpha): j_{n+1}(\beta)=\beta\}$
and
\[ j(\alpha)=j\left(\bigcup_{n<\om}A_n\right)=\bigcup_{n<\om}j(A_n).\]
But $\alpha<j(\alpha)$ by choice of $\alpha$
and part \ref{item:x_stable}, so $\alpha\in j(A_n)$ for some $n$,
so $j_{n+1}(\alpha)=\alpha$, contradiction.

Part \ref{item:P_xi_gets_stable}: By the above,
there is $n_0$ such that $\alpha$ is $(j,n_0)$-stable.
Now argue as in the previous part from $n_0$ onward,
and using the parameter $\alpha$, define
the collection $P$ of prewellorders of $V_\alpha$
of the form $P=P_\xi$ for some ordinal $\xi$,
with $\xi$ least
such that for no $n\in[n_0,\om)$ is $j_n(P)=P$.
Here $\xi\geq\delta$ is possible.
Note that the notion of \emph{prewellorder} (regarding
relations $R\in V_\delta$)
is simple enough that it is preserved by our
$\Sigma_1$-elementary maps. Likewise,
the lengths of 2 prewellorders can be compared in
a simple enough fashion,
and hence we always have $j_n(P_\xi)=P_{\xi'}$
with some $\xi'_n$. In fact, we get $\xi'_n>\xi$,
since $j_n(P_\zeta)=P_\zeta$
for $\zeta<\xi$. One can now argue for a contradiction much as before.

Part \ref{item:j_n_nicely_stable_exists}: 
By parts \ref{item:alpha_gets_stable} and \ref{item:P_xi_gets_stable}.
 
 Part \ref{item:j^n_fully_elem}: 
 If $\delta$ is 
inaccessible
 then for every $A\sub V_\delta$, 
$(V_\delta,A)\sats\ZF(A)$.\footnote{That is, $\ZF$ augmented
with Collection and Separation for formulas in the language
with $\in$ and $\dot{A}$, and $\dot{A}$ interprets $A$.}
By part \ref{item:j^n_pres_Sigma_1-elem},
 $j:(V_\delta,A)\to(V_\delta,j(A))$ is $\Sigma_1$-elementary.
Therefore a direct relativization of Fact 
\ref{fact:ZF_Sigma_1-elem_cofinal_implies_full_elem}
shows that $j$ is fully elementary in the expanded language.

Now consider the case that $\delta$ is singular and let $\gamma=\cof(\delta)$.
By renaming, we may assume
 $j(\gamma)=\gamma$. Let $A\sub V_\delta$.
 We know
 $j:(V_\delta,A)\to(V_\delta,j(A))$
is $\Sigma_1$-elementary, and must show it is fully elementary.

We begin with $\Sigma_2$-elementarity.
Let $x\in V_\delta$ and $\varphi$ be $\Pi_1$
and suppose that
\[ (V_\delta,j(A))\sats\exists y\varphi(j(x),y),\]
and let $\beta<\delta$ be such that some
$y\in V_{j(\beta)}$
witnesses this.

Suppose first that $\gamma<\delta$;
so we are assuming $j(\gamma)=\gamma$. Let $f:\gamma\to\delta$ be cofinal 
and increasing. For $\xi<\gamma$ let
\[ B_\xi=\{z\in V_\beta:  (V_{f(\xi)},A\inter 
V_{f(\xi)})\sats\varphi(x,z)\}.\]
Then note that
\[ j(B_\xi)=\{z\in V_{j(\beta)}: (V_{j(f(\xi))},j(A)\inter 
V_{j(f(\xi))})\sats\varphi(j(x),z)\}.\]
Therefore $y\in j(B_\xi)$, so in fact
$y\in \left(\bigcap_{\xi<\gamma}j(B_\xi)\right)\neq\emptyset$.
As $\gamma<\delta$, we have $\left<B_\xi\right>_{\xi<\gamma}\in V_\delta$.
Also,
\[ \xi_0<\xi_1\implies B_{\xi_1}\sub B_{\xi_0}.\]
So the same holds of $j(\left<B_\xi\right>_{\xi<\gamma})$,
and since $j(\gamma)=\gamma$, we have $j``\gamma$ cofinal in $j(\gamma)$,
and so letting 
$j(\left<B_\xi\right>_{\xi<\gamma})=\left<B'_\xi\right>_{\xi<\gamma}$,
\[j\left(\bigcap_{\xi<\gamma}B_\xi\right)=\bigcap_{\xi<\gamma}B'_\xi=
\bigcap_{\xi<\gamma} B'_{j(\xi)}=\bigcap_{\xi<\gamma}j(B_\xi)\neq\emptyset.\]
So $\bigcap_{\xi<\gamma}B_\xi\neq\emptyset$.
But letting $z\in\bigcap_{\xi<\gamma}B_\xi$,
note that
$(V_\delta,A)\sats\varphi(x,z)$,
as desired.

Now suppose instead that $\delta$ is regular non-inaccessible.
Define $\left<B_\xi\right>_{\xi<\delta}$
as before, except that now $f(\xi)=\xi$ for $\xi<\delta$. If there 
is $\xi_0<\delta$
such that $B_\xi=B_{\xi_0}$ for all $\xi\in[\xi_0,\delta)$,
then we easily have that $B_{\xi_0}\neq\emptyset$, and any
$z\in B_{\xi_0}$ witnesses $\exists y\varphi(x,y)$
as before. Now suppose there is no such $\xi_0$.
Given $z_0,z_1\in B=\bigcup_{\xi<\delta}B_\xi$,
say that $z_0<^*z_1$ iff there is $\xi<\delta$
such that $z_1\in B_\xi$ but $z_0\notin B_\xi$.
Then $<^*$ is a prewellorder on $B$,
and $<^*$ is in $V_\delta$, and because
$\gamma=\delta$ and there is no $\xi_0$ as above,
$\delta$ is the the ordertype of $<^*$. 
So let $P=\scot(\delta)$, so by assumption $j(P)=P$, which easily gives
that $j({<^*})$ also has ordertype $\delta$.
The function $z\mapsto B_{\rank^*(z)}$, with domain $B$,
and where $\rank^*(z)$ is the ${<^*}$-rank of $z$,
is also in $V_\delta$. But then we can
argue as before to show $\bigcap_{\xi<\delta}B_\xi\neq\emptyset$,
which suffices, also as before.

So we have  $\Sigma_2$-elementarity (with respect to an
arbitrary $A\sub V_\delta$).
Now suppose
we have $\Sigma_k$-elementarity where $k\geq 2$.
Define the theory
\[ T=T^A_{k-1}=\Th_{\Sigma_{k-1}}^{(V_\delta,A)}(V_\delta);\]
this denotes the theory consisting of all pairs $(\varphi,x)$ such 
that
$\varphi$ is a $\Sigma_{k-1}$ formula and
$(V_\delta,A)\sats\varphi(x)$. The $\Sigma_k$-elementarity of $j$ gives:

\begin{clm} 
$j(T)=\Th_{\Sigma_{k-1}}^{(V_\delta,j(A))}(V_\delta)$.\end{clm}
\begin{proof}
Given $\alpha<\delta$, we have
\[ (V_\delta,A)\sats\all x\in 
V_\alpha\ [\all\ \Sigma_{k-1}\text{ formulas }\varphi\text{ 
of }\Ll_{\dot{A}}\ 
[\varphi(x)\iff (\varphi,x)\in T\inter V_\alpha]], \]
which is a $\Pi_k$ assertion of parameter $(V_\alpha,T\inter V_\alpha)$, which
therefore lifts to $(V_\delta,j(A))$ regarding the parameter 
$(V_{j(\alpha)},j(T)\inter V_{j(\alpha)})$.\end{proof}

So by what we have proved above, but with $(A,T)$
replacing $A$, we have that $j$ is $\Sigma_2$-elementary
as a map
\begin{equation}\label{eqn:j_Sigma_2-elem_for_T} j:(V_\delta,(A,T))\to 
(V_\delta,(j(A),j(T))).\end{equation}

Now let $\varphi$ be $\Sigma_{k-1}$ and suppose that
\[ (V_\delta,j(A))\sats\exists y\all z\ [\varphi(j(x),y,z)];\]
equivalently,
\[ (V_\delta,(j(A),j(T)))\sats\exists y\all z\ [(\varphi,(j(x),y,z))\in j(T)].\]
By the $\Sigma_2$-elementarity of $j$ with respect to the structures in line
(\ref{eqn:j_Sigma_2-elem_for_T}) above, therefore
\[ (V_\delta,(A,T))\sats\exists y\all z\ [(\varphi,(x,y,z))\in T];\]
equivalently,
$(V_\delta,A)\sats\exists y\all z\ [\varphi(x,y,z)]$,
as desired.
\end{proof}

Using the preceding theorem, we now improve
on Theorem \ref{LimitUndefinable}:
\begin{tm}\label{tm:no_def_Sigma_1_V_delta_to_V_delta}
 Let $j\in\mathscr{E}_1(V_\delta)$ where $\delta\in\Lim$.
 Then: 
 \begin{enumerate}
  \item\label{item:Sigma_1_elem_j_not_def_from_params} $j$ is not definable
 from parameters over $V_\delta$.
\item\label{item:Sigma_1_elem_j_not_in_own_Ult} 
There is no $(a,f)$ with $a\in V_\delta$ and $f:V_\delta\to 
V_{\delta+1}$ and
 $j^+(f)(a)=j$.
 \end{enumerate}
\end{tm}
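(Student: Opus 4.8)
The plan is to reduce both parts to an application of the "stabilizing iterate" apparatus of Theorem \ref{tm:an_iterate_is_elementary} together with the non-definability argument of Theorem \ref{LimitUndefinable}, following the template of Suzuki's proof. First I would pass from $j$ to a nicely stable iterate: by part \ref{item:j_n_nicely_stable_exists} of Theorem \ref{tm:an_iterate_is_elementary}, fix $n<\omega$ such that $j_n$ is nicely stable, and by part \ref{item:j^n_fully_elem} conclude $j_n\in\mathscr{E}(V_\delta)$ is fully elementary. Each $j_m$ (in particular $j=j_0$) is definable over $V_\delta$ from the parameter $j_n\rest V_\alpha$ for suitable $\alpha$ via the canonical extension operation — indeed $j=(j_n\rest V_\delta)$-restricted data, but the cleaner statement is that $j_0,\dots,j_{n-1}$ are all recoverable from $j_n$ by the "inverse" of the $+$ operation, which is $\Sigma_1$-definable; alternatively one observes that it suffices to derive a contradiction from definability of any single $\Sigma_1$-elementary map, since if $j$ were definable from parameters then so would be enough data to reach the obstruction below. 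The key point is: \emph{it suffices to prove the theorem for fully elementary $j$}, because definability of $j$ over $V_\delta$ yields definability of each $j_m$ and in particular of the fully elementary $j_n$, and part \ref{item:Sigma_1_elem_j_not_in_own_Ult} can be likewise reduced since $j_n$ is built from $j$ by applying $j^+$ finitely often.

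For part \ref{item:Sigma_1_elem_j_not_def_from_params}, assuming $j$ (hence $j_n$, fully elementary) is definable over $V_\delta$ from a parameter $p\in V_\delta$ by a $\Sigma_k$ formula $\varphi$, I would run exactly the argument of Theorem \ref{LimitUndefinable}: for each $\ell$-bad parameter $q\in V_\beta$ (where $\beta=\rank(p)$ and $q$ being $\ell$-bad means $j_q$ is a $\Sigma_\ell$-elementary self-map of $V_\delta$) form the definable sets $A_\ell=\{q\in V_\beta: q\text{ is }\ell\text{-bad}\}$, take $A=\bigcap_{\ell<\omega}A_\ell$, note $\left\langle A_\ell\right\rangle_{\ell<\omega}$ is coded in $V_\delta$, so $j_n(A)=\bigcap_{\ell<\omega}j_n(A_\ell)$. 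Let $\mu_0$ be the least critical point among $\{\crit(j_q): q\in A\}$, which is witnessed by some $p_0\in A$; then $p_0\in j_n(A)$, so $V_\delta\sats$``$\exists q\in j_n(A)\,[\crit(j_q)<j_n(\mu_0)]$'', and pulling back by elementarity of $j_n$ contradicts the minimality of $\mu_0$. The only new wrinkle over Theorem \ref{LimitUndefinable} is verifying that badness remains a definable notion and that the coding of $\left\langle A_\ell\right\rangle_{\ell<\omega}$ works when $\delta$ is a limit — but in that case the sequence literally lies in $V_\delta$, so this is immediate.

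For part \ref{item:Sigma_1_elem_j_not_in_own_Ult}, the content is that $j$ cannot lie in its own canonical-extension-ultrapower: if $a\in V_\delta$ and $f:V_\delta\to V_{\delta+1}$ satisfied $j^+(f)(a)=j$, then $j$ would be recoverable over $V_{\delta}$ from the parameters $a$ and the code $\widetilde f$ of $f$ — but $\widetilde f$ is a subset of $V_\delta$, i.e.\ $\widetilde f\in V_{\delta+1}$, not necessarily in $V_\delta$, so this is not literally "definable from parameters over $V_\delta$". The fix is to work one level up: I would argue that $j^+(f)(a)=j$ forces $f$ itself (equivalently $\widetilde f$, together with $a$ and the $\Sigma_1$-definable operation $g\mapsto g^+$) to define $j$ over $V_{\delta+1}$, and then derive a contradiction by the minimal-critical-point argument carried out over $V_{\delta+1}$ using the full elementarity of the nicely-stable iterate $j_n^+$ acting on $V_{\delta+1}$; here one uses part \ref{item:j^n_pres_Sigma_1-elem} in the form $j_n:(V_\delta,A)\to(V_\delta,j_n^+(A))$ fully elementary for $A=\widetilde f$, together with $j_n^+$ being $\Sigma_1$-elementary on $V_{\delta+1}$, to transport the "least critical point of a bad parameter" to a contradiction.

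The main obstacle I expect is precisely this last reduction in part \ref{item:Sigma_1_elem_j_not_in_own_Ult}: pinning down exactly which structure $j^+(f)(a)=j$ makes $j$ definable over, and ensuring that the minimal-critical-point diagonalization still goes through when the ambient structure is $V_{\delta+1}$ (where one must use $j_n^+$ and its elementarity with respect to predicates, rather than a genuine elementary self-embedding of $V_{\delta+1}$, which need not exist). The rest is a faithful replay of the proofs of Theorem \ref{LimitUndefinable} and Fact \ref{fact:suzuki_no_def_j}.
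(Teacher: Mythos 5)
Your treatment of part \ref{item:Sigma_1_elem_j_not_def_from_params} is correct and is the paper's argument: pass to a nicely stable iterate $j_n$, which is fully elementary by Theorem \ref{tm:an_iterate_is_elementary} and is definable from the same parameters as $j$ (since $j_{m+1}=j_m^+(j_m)$ and the $+$ operation is definable from $j_m$), so Theorem \ref{LimitUndefinable} already gives the contradiction; your replay of that proof is redundant but harmless. (Your aside about recovering $j_0,\dots,j_{n-1}$ from $j_n$ by an ``inverse'' of $+$ is the wrong direction and is not needed; only the forward direction matters.)

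For part \ref{item:Sigma_1_elem_j_not_in_own_Ult} there is a genuine gap, and it is exactly the obstacle you flag at the end without resolving. First, a framing issue: the equation $j^+(f)(a)=j$ does \emph{not} make $j$ ``definable over $V_{\delta+1}$ from $\widetilde f$ and $a$'' --- computing $j^+(f)$ requires applying $j^+$ to $\widetilde f$, so the equation is a fixed-point condition, not a definition. The correct reading is that $j$ is a point of its own extender ultrapower, and accordingly the diagonalization must minimize $\crit(k)$ over all $k\in\mathscr{E}_1(V_\delta)$ satisfying $k(p)=p$ (for the stability parameter $p$, namely $\cof(\delta)$, $\scot(\delta)$, or $\emptyset$ according to the case) \emph{and} $k=k^+(h)(c)$ for some $c\in V_\delta$ and $h:V_\delta\to V_{\delta+1}$ --- not over ``parameters defining $k$''. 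Second, and more importantly, the engine of the argument is missing: to run the reflection over $V_{\delta+1}$ one needs $j_0^+$ (for the minimizing witness $j_0$) to be $\Sigma_1$-elementary on $V_{\delta+1}$, and this is not automatic --- canonical extensions of elementary self-maps of $V_\delta$ can fail to be elementary. The paper derives it precisely from the hypothesis being refuted: since $j^+(f)(a)=j$, every $x\in V_{\delta+1}$ equals $j^+(g)(a)$ where $g(u)=f(u)^{-1}``x$ (because $x=j^{-1}``j^+(x)$), so the map $g\mapsto j^+(g)(a)$ is onto $V_{\delta+1}$; a $\Sigma_1$ witness $y=j^+(g)(a)$ over $V_{\delta+1}$ can then be pulled back to some $g(b)$ with $b\in V_\delta$ using the full elementarity of $j:(V_\delta,x,g)\to(V_\delta,j^+(x),j^+(g))$ supplied by Theorem \ref{tm:an_iterate_is_elementary}(\ref{item:j^n_fully_elem}). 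Without this surjectivity step your appeal to ``$j_n^+$ being $\Sigma_1$-elementary on $V_{\delta+1}$'' is unjustified, and the minimal-critical-point argument cannot be transported up to $V_{\delta+1}$. Finally, you should also verify that the counterexample property is preserved under iteration, i.e.\ that $(j_{k+1},j_k(a_k),j_k^+(f_k))$ is again a counterexample, so that one may indeed assume $j$ itself is nicely stable.
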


\begin{rem}The reader familiar with extenders will note that
in the
proof of part \ref{item:Sigma_1_elem_j_not_in_own_Ult},
we are considering $\Ult(V_{\delta+1},E)$ where $E$ is the $V_\delta$-extender 
derived from $j$. As before, we can represent functions $f:V_\delta\to 
V_{\delta+1}$ via relations $\sub V_\delta\cross V_\delta$, and hence with 
elements of $V_{\delta+1}$, and when $j\in\mathscr{E}_1(V_\delta)$,
one gets
$j^+(f):V_\delta\to V_{\delta+1}$, making sense of the statement of part 
\ref{item:Sigma_1_elem_j_not_in_own_Ult} above.
\end{rem}
\begin{proof}
Part \ref{item:Sigma_1_elem_j_not_def_from_params}: Suppose otherwise. Then by 
Theorem \ref{tm:an_iterate_is_elementary},
there is $n<\om$ such that $j_n:V_\delta\to V_\delta$
is fully elementary, and since $j$ is definable from parameters
over $V_\delta$, so is $j_n$. This contradicts Theorem \ref{LimitUndefinable}.

Part \ref{item:Sigma_1_elem_j_not_in_own_Ult}:
 Suppose otherwise and fix a counterexample $(j,a,f)$. Then for each 
$n<\om$, $(j_n,a_n,f_n)$ is also a counterexample,
 where $(a_0,f_0)=(a,f)$ and $(a_{k+1},f_{k+1})=(j_k(a_k),j_k^+(f_k))$.
 (For note that one can apply $j$ to each initial segment of the sets 
corresponding to the equation $j^+(f)(a)=j$, and their union yields 
$j_1^+(j^+(f))(j(a))=j_1$, so $j_1^+(f_1)(a_1)=j_1$. Etc.)
So by Theorem \ref{tm:an_iterate_is_elementary}, we may
assume  $j$ is nicely stable. Also by
\ref{tm:an_iterate_is_elementary}, it follows that $j^+:V_{\delta+1}\to 
V_{\delta+1}$ is $\Sigma_0$-elementary. Let $\mathscr{I}$ be the  set of 
functions $g:V_\delta\to V_{\delta+1}$. We have
\[ V_{\delta+1}=\{j^+(g)(a): g\in\mathscr{I}\},\]
because if $x\in V_{\delta+1}$ then $x=j^{-1}``j^+(x)$,
so letting $g(u)=f(u)^{-1}``x$ (where $j^+(f)(a)=j$),
we get $x=j^+(g)(a)$. It follows that $j^+$ is $\Sigma_1$-elementary.
For let $\varphi$ be $\Sigma_0$ and suppose 
$V_{\delta+1}\sats\varphi(j^+(x),y)$. Let $g:V_\delta\to V_{\delta+1}$
be such that $j^+(g)(a)=y$.  Note that there is a formula $\psi$
such that for all $u\in V_{\delta+1}$ and 
$h:V_\delta\to V_{\delta+1}$ and $b\in V_\delta$,
we have $V_{\delta+1}\sats\varphi(u,h(b))$ iff $(V_\delta,u,h)\sats\psi(b)$
(where as before, we code $h$ naturally with a relation $\sub V_\delta\cross 
V_\delta$, and $\psi$ has predicates referring to $u,h$). 
But then since
\[ j:(V_\delta,x,g)\to(V_\delta,j^+(x),j^+(g)) \]
is elementary
and $(V_\delta,j^+(x),j^+(g))\sats\exists b\psi(b)$
(as witnessed by $b=a$), we have
$(V_\delta,x,g)\sats\exists b\psi(b)$, so there is $b\in V_\delta$
such that $V_{\delta+1}\sats\varphi(x,g(b))$.

Now if $\delta$ is singular, let $p=\cof(\delta)$,
and if $\delta$ is regular non-inaccessible, let $p=\scot(\delta)$,
and otherwise let $p=\emptyset$. Let $\kappa_0$
be the least critical point of all $k\in\mathscr{E}_1(V_\delta)$
such that $k(p)=p$ and $k=k^+(h)(c)$ for some $c\in V_\delta$ and 
$h:V_\delta\to  V_{\delta+1}$. Fix $j_0,h_0,c_0$ witnessing the choice of 
$\kappa_0$.
By the preceding discussion, $j_0\in\mathscr{E}(V_\delta)$ and 
$j_0^+\in\mathscr{E}_1(V_{\delta+1})$.
We have $p\in\rg(j_0^+)$, but $\kappa_0\notin\rg(j_0^+)$.

Let $\eta=j_0(\kappa_0)$. 
Then $V_{\delta+1}\sats$``there are $k,\mu,h,c$
such that $k\in\mathscr{E}_1(V_\delta)$ and $\crit(k)=\mu<\eta$ and $k(p)=p$ 
and $h:V_\delta\to V_{\delta+1}$ and $c\in V_\delta$ and $k^+(h)(c)=k$''
(as witnessed by $j_0,\kappa_0,h_0,c_0$). Since 
$j_0(p,\delta,\kappa_0)=(p,\delta,\eta)$ and by the $\Sigma_1$-elementarity of 
$j_0^+$,  we can fix some such $\mu\in\rg(j_0)$. But 
note $\kappa_0\leq\mu<\eta$, by the minimality of $\kappa_0$, 
contradiction.
\end{proof}

Many of the arguments applied in this section
to rank-into-rank embeddings, also apply more generally,
and in particular to embeddings consistent with $\ZFC$:

\begin{tm}\label{tm:j:V_eta_to_V_delta}
 Let $\eta<\delta$ be limit ordinals
 and $j:V_\eta\to V_\delta$ be $\Sigma_1$-elementary and $\in$-cofinal.
 Then:
 \begin{enumerate}
  \item\label{item:fully_elem_implies_not_def_from_params} If $j$ is fully 
elementary then $j$ is not definable over $V_\delta$ from parameters.
\item\label{item:fixing_mu_implies_fully_elem} If $\mu=\cof(\eta)<\eta$ and 
$j(\mu)=\mu$ 
then for every $A\sub V_\eta$, defining
\[ j(A)=\bigcup_{\beta<\eta}j(A\inter 
V_\beta),\]
the map
$j:(V_\eta,A)\to j(V_\delta,j(A))$
is fully elementary.
 \end{enumerate}
\end{tm}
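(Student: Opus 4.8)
The plan is to establish part \ref{item:fixing_mu_implies_fully_elem} first and then deduce part \ref{item:fully_elem_implies_not_def_from_params} from it together with a reflection argument. For part \ref{item:fixing_mu_implies_fully_elem}, I would follow the proof of Theorem \ref{tm:an_iterate_is_elementary}\ref{item:j^n_fully_elem} in the singular case almost verbatim, the only change being that the domain $V_\eta$ and codomain $V_\delta$ now differ. First I would verify that $j:(V_\eta,A)\to(V_\delta,j(A))$ is $\Sigma_1$-elementary in $\Ll_{\dot A}$: given a $\Sigma_0$ formula $\varphi$ of $\Ll_{\dot A}$ with $(V_\eta,A)\models\exists y\,\varphi(x,y)$, a witness lies in some $V_\beta$, $\beta<\eta$, and one checks directly from the definition of $j(A)$ and the $\Sigma_1$-elementarity of $j$ in $\in$ that $j(A\cap V_\beta)=j(A)\cap V_{j(\beta)}$; since satisfaction over a set structure is $\Delta_1$, the $\Sigma_1$-elementarity of $j$ in $\in$ then transports ``$(V_\beta,A\cap V_\beta)\models\varphi(x,y)$'' up to $(V_{j(\beta)},j(A)\cap V_{j(\beta)})$, hence to $(V_\delta,j(A))$, while the converse direction pulls a given witness $y\in V_\delta$ back into some $V_{j(\beta)}$ using $\in$-cofinality. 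Then I would induct on $n$: for $\Sigma_1\Rightarrow\Sigma_2$, fix a cofinal increasing $f:\mu\to\eta$, write the relevant set of witnesses as a $\subseteq$-decreasing intersection $\bigcap_{\xi<\mu}B_\xi$ with $\langle B_\xi\rangle_{\xi<\mu}\in V_\eta$ (this is where $\mu<\eta$ enters), note that $j``\mu$ is cofinal in $j(\mu)=\mu$ (being a subset of the regular ordinal $\mu$ of order type $\mu$), conclude that $j$ commutes with this intersection, and so obtain $\bigcap_{\xi<\mu}B_\xi\neq\emptyset$ and a witness in $V_\eta$; this is the only step using $\mu=\cof(\eta)<\eta$ and $j(\mu)=\mu$. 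For $\Sigma_k\Rightarrow\Sigma_{k+1}$ with $k\geq2$, I would code the theory $T=\Th_{\Sigma_{k-1}}^{(V_\eta,A)}(V_\eta)$ as a predicate on $V_\eta$, observe that $\Sigma_k$-elementarity makes $j(T):=\bigcup_{\beta<\eta}j(T\cap V_\beta)$ the corresponding $\Sigma_{k-1}$-theory of $(V_\delta,j(A))$, and then apply the $\Sigma_2$-case already proved to the expanded predicate $(A,T)$, which reduces a $\Sigma_{k+1}$-$\Ll_{\dot A}$ assertion to a $\Sigma_2$-$\Ll_{\dot A,\dot T}$ assertion. I expect only bookkeeping here.

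For part \ref{item:fully_elem_implies_not_def_from_params}, suppose $j:V_\eta\to V_\delta$ is fully elementary, $\in$-cofinal, and definable over $V_\delta$ from $p$ by a fixed formula $\varphi_0$; note $\eta>\omega$ (else $j\restriction\OR^{V_\eta}=\id$, contradicting $\in$-cofinality), $\crit(j)<\eta$, and $\cof(\eta)=\cof(\delta)$ since $j``\eta$ is cofinal in $\delta$ of order type $\eta$. Because we cannot assume $V_\delta\models\ZF$, and hence cannot invoke Fact \ref{fact:ZF_Sigma_1-elem_cofinal_implies_full_elem} over $V_\delta$, the first move is a reflection: by ordinary L\'evy reflection choose $\bar\delta\leq\delta$ of cofinality $\omega$ with $p\in V_{\bar\delta}$ and $V_{\bar\delta}\elem_k V_\delta$ for $k$ large enough to see that $\bar j:=\{(x,y)\in V_{\bar\delta}^2:V_{\bar\delta}\models\varphi_0(p,x,y)\}$ is a $\Sigma_1$-elementary, $\in$-cofinal, non-identity map from a limit level $V_{\bar\eta}$ with $\omega<\bar\eta<\bar\delta$ into $V_{\bar\delta}$, still defined over $V_{\bar\delta}$ by $\varphi_0(p,\cdot,\cdot)$. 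Then $\cof(\bar\eta)=\cof(\bar\delta)=\omega<\bar\eta$ and $\bar j(\omega)=\omega$, so part \ref{item:fixing_mu_implies_fully_elem} (with $A=\emptyset$) gives that $\bar j$ is fully elementary. Replacing $(j,\eta,\delta)$ by $(\bar j,\bar\eta,\bar\delta)$, I may assume $\cof(\delta)=\omega$.

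Finally I would run a Suzuki-style minimization over $V_\delta$. Call $q$ \emph{bad} iff $j_q:=\{(x,y):V_\delta\models\varphi_0(q,x,y)\}$ is a $\Sigma_1$-elementary, $\in$-cofinal, non-identity map from a limit level $V_{\eta_q}$ with $\cof(\eta_q)=\omega$ into $V_\delta$; this is a single parameter-free formula over $V_\delta$, and $p$ is bad. By part \ref{item:fixing_mu_implies_fully_elem}, every bad $j_q$ is fully elementary, so $\kappa_0:=\min\{\crit(j_q):q\text{ bad}\}$ is defined over $V_\delta$ without parameters. Fix a bad $q_0$ with $\crit(j_{q_0})=\kappa_0$. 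Since $j_{q_0}$ is fully elementary, $\rg(j_{q_0})\elem V_\delta$, so $\kappa_0\in\rg(j_{q_0})$, say $\kappa_0=j_{q_0}(\xi)$ with $\xi$ an ordinal of $V_{\eta_{q_0}}$; but $\xi<\kappa_0$ gives $\kappa_0=j_{q_0}(\xi)=\xi$, $\xi=\kappa_0$ gives $j_{q_0}(\kappa_0)=\kappa_0$ contradicting $\crit(j_{q_0})=\kappa_0$, and $\xi>\kappa_0$ gives $\kappa_0=j_{q_0}(\xi)>j_{q_0}(\kappa_0)>\kappa_0$, so every case is absurd. The main obstacle is exactly the reflection step: the usual self-reference obstruction in Suzuki-type arguments prevents one from getting full elementarity out of a low-complexity ``bad'' notion directly when $V_\delta\not\models\ZF$, so one must first engineer, via reflection down to cofinality $\omega$, the situation in which part \ref{item:fixing_mu_implies_fully_elem} furnishes full elementarity of the relevant embeddings for free.
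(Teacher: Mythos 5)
Your part \ref{item:fixing_mu_implies_fully_elem} is essentially the paper's proof: the paper also just reruns the singular case of Theorem \ref{tm:an_iterate_is_elementary}\ref{item:j^n_fully_elem}, with $\in$-cofinality supplying the pull-back of witnesses and $j(\mu)=\mu$ entering only at the $\Sigma_1\Rightarrow\Sigma_2$ step. Your Suzuki-style minimization at the end of part \ref{item:fully_elem_implies_not_def_from_params} also matches the paper's. The problem is the step you yourself flag as ``the main obstacle'': the reflection to $\bar\delta$ with $\cof(\bar\delta)=\omega$ and $V_{\bar\delta}\elem_k V_\delta$. L\'evy reflection produces $V_\alpha\elem_k V$, not $V_{\bar\delta}\elem_k V_\delta$; to reflect \emph{below} $\delta$ you need the set $C_k=\{\alpha<\delta: V_\alpha\elem_k V_\delta\}$ to be unbounded in $\delta$, and the usual proof of that uses Collection inside $V_\delta$. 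But under your own hypothesis, $V_\delta$ \emph{provably fails} Replacement: $j\rest\eta$ is a cofinal map from $\eta<\delta$ into $\OR^{V_\delta}$ that is definable over $V_\delta$ from $p$. So there is no license to assume $C_k$ is unbounded, or even that it contains any point above $\rank(p)$ of cofinality $\omega$; the sup of the least witness ranks over $V_{\beta}$ in the standard L\"owenheim--Skolem-style construction may well be $\delta$ itself. Thus the reduction to $\cof(\delta)=\omega$ is a genuine gap, and it is exactly the same obstruction ($V_\delta\not\models\ZF$) that the reflection was supposed to circumvent.

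The paper avoids reflection entirely. Setting $\mu=\cof(\delta)=\cof(\eta)$ (which may be uncountable), one observes that since some cofinal map $\mu\to\OR^{V_\delta}$ is definable from parameters over $V_\delta$ (compose $j$ with a cofinal $g:\mu\to\eta$, which is an element of $V_\delta$), the ordinal $\mu$ is the \emph{least} ordinal admitting a $\Sigma_n$-definable-from-parameters cofinal map into $\OR$, for suitable $n$; this makes $\mu$ definable over $V_\delta$ without parameters, and pulling the same characterization back through the fully elementary $j$ gives $j(\mu)=\mu$. One then builds ``$j_q(\mu)=\mu$'' into the definition of a \emph{good} parameter $q$, so that part \ref{item:fixing_mu_implies_fully_elem} applies directly to every good $j_q$ with this fixed $\mu$, and the minimization of critical points goes through over $V_\delta$ itself. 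If you replace your reflection step with this identification of $\mu$ (and accordingly replace ``$\cof(\eta_q)=\omega$'' by ``$j_q(\mu)=\mu$'' in your badness condition), the rest of your argument, including the final trichotomy on $\xi$ with $j_{q_0}(\xi)=\kappa_0$, is correct.
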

\begin{proof}
Part \ref{item:fixing_mu_implies_fully_elem}:
This is almost the same as the proof of the corresponding fact in the singular 
case of Theorem \ref{tm:an_iterate_is_elementary} part \ref{item:j^n_fully_elem} 
(note we have assumed
that $j$ is $\in$-cofinal, which is important).

Part \ref{item:fully_elem_implies_not_def_from_params}:
 Suppose not. Then $\delta$ is singular, definably from parameters over 
$V_\delta$, as witnessed by $j\rest\eta:\eta\to\delta$. Let 
$\mu=\cof(\delta)=\cof(\eta)$. Using the elementarity of $j$,
it easily follows that there is $n<\om$ such that both $V_\eta$
and $V_\delta$ satisfy ``There is a  function 
$k:\mu\to\OR$ which is $\Sigma_n$-definable from parameters,
and $\mu$ is least such'', and $j(\mu)=\mu$. Note that it also follows
that $\mu$ is definable over $V_\delta$ without parameters.

Now fix a formula $\varphi$ and $p\in V_\delta$ such that
$j(x)=y$ iff $V_\delta\sats\varphi(p,x,y)$.
For $q\in V_\delta$ let $j_q=\{(x,y)\bigm|V_\delta\sats\varphi(q,x,y)\}$.
Say $q$ is \emph{good} iff there is a limit $\eta'<\delta$
such that $j_q:V_{\eta'}\to 
V_\delta$ is $\Sigma_1$-elementary
and $\in$-cofinal and $j_q(\mu)=\mu$.
By part \ref{item:fixing_mu_implies_fully_elem}, if $q$ is good then $j_q$
is fully elementary.
Then the least critical point among all good $j_q$,
is definable over $V_\delta$ without 
parameters, which leads to the usual contradiction.
\end{proof}

Of course in the situation above, the iterates $j_n$ of $j$ are not well-defined
(at least not in their earlier form), so we have not ruled out the possibility
of $j:V_\eta\to V_\delta$ which is $\Sigma_1$-elementary and $\in$-cofinal
with $j(\mu)>\mu$, which is definable from parameters.
The following theorem, due to Andreas Lietz and the second author,
shows that if a Reinhardt cardinal exists then it is at times
necessary to pass from $j$ to $j_n$ to secure full elementarity:\footnote{The 
second author
initially noticed the $n=1$ example, then Lietz generalized this
to $n>1$ via basically the method at the end of the proof,
but from a stronger  assumption to secure fixed points, and then the second 
author observed the claim on fixed points, leading to the version here.}
\begin{tm}[Lietz, S.] Suppose $j\in\mathscr{E}(V_{\lambda^+})$ 
 where $\lambda=\kappa_\om(j)$.
 Then for each $n<\om$ there is a limit $\delta<\lambda^+$
 such that $j``\delta\sub\delta$ and $k=j\rest 
V_\delta\in\mathscr{E}_1(V_\delta)$, but 
$k=k_0,k_1,\ldots,k_n\notin\mathscr{E}_2(V_\delta)$.
\end{tm}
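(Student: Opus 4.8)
The plan is to produce, for the fixed \(n\), a single limit ordinal \(\delta<\lambda^+\) closed under \(j\) for which \(k:=j\restriction V_\delta\) is \(\Sigma_1\)-elementary while the first \(n+1\) iterates \(k_0,\dots,k_n\) of \(k\) each fail \(\Sigma_2\)-elementarity; by Theorem~\ref{tm:an_iterate_is_elementary} it is enough to arrange this, since that theorem already tells us some later iterate is fully elementary. Write \(\kappa_m=\kappa_m(j)\); recall \(j(\lambda)=\lambda\) (as \(\lambda\) is the largest cardinal of \(V_{\lambda^+}\), hence definable over it), and that the \(m\)-th iterate \(j_m\) of \(j\) satisfies \(j_m\restriction V_{\kappa_m}=\id\), \(\crit(j_m)=\kappa_m\), and \(j_m(\kappa_i)=\kappa_{i+1}\) for \(i\geq m\).

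\textbf{Outer frame.} First I would fix, by reflection, a limit \(\delta\in(\lambda,\lambda^+)\) that is closed under \(j\), has \(\cof(\delta)=\kappa_n\), and satisfies \(V_\delta\elem_1 V_{\lambda^+}\); such \(\delta\) are stationary in \(\lambda^+\) (the Hartogs number \(\lambda^+\) being regular, \(\{\delta:\cof(\delta)=\kappa_n\}\) is stationary and meets the relevant club). For such \(\delta\), \(k\in\mathscr{E}_1(V_\delta)\); and since \(\delta\) is closed under the uniformly definable canonical-extension operation, \(k_m=j_m\restriction V_\delta\) for all \(m\), whence every \(k_m\in\mathscr{E}_1(V_\delta)\) by Theorem~\ref{tm:an_iterate_is_elementary}. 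Because \(\kappa_n\) is inaccessible, \(j``\kappa_n\) is bounded in \(j(\kappa_n)=\kappa_{n+1}\); so \(j\) carries a cofinal \(\kappa_n\)-sequence in \(\delta\) to a sequence that is \emph{not} cofinal in \(j(\delta)\) yet has supremum \(\delta\) (by closure), forcing \(j(\delta)>\delta\). Hence \(j_0,\dots,j_n\) are not nicely stable while \(j_{n+1},j_{n+2},\dots\) are, consistent with what we must prove; it remains to show \(k_0,\dots,k_n\notin\mathscr{E}_2(V_\delta)\).

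\textbf{Making \(\cof(\delta)\) visible, and the two easy iterates.} Although no cofinal map into \(\OR^{V_\delta}\) lies in \(V_\delta\), the equality \(|\delta|=\lambda\) gives a prewellorder \(R\) of \(V_\lambda\) of order type \(\delta\) with \(R\in V_\delta\). So, with parameter \(\lambda\), ``\(\OR\) carries a cofinal sequence of length \(\leq x\)'' can be written as ``\(\exists R\,\exists g\,[\,R\) prewellorders \(V_\lambda\) cofinally in \(\OR,\ g:x\to V_\lambda,\ \) and the \(R\)-norms of \(\rg(g)\) are cofinal in \(\OR\,]\)'', which is genuinely \(\Sigma_2\) over \(V_\delta\) (the cofinality clause inside the existentials is \(\Pi_1\)), and holds iff \(\cof(\delta)\leq x\). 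Thus ``\(\cof(\OR)\geq x\)'' and ``\(\cof(\OR)>x\)'' are \(\Pi_2\), hence preserved by any \(\Sigma_2\)-elementary self-map of \(V_\delta\). Now \(k_n\) does not preserve the \(\Pi_2\) statement ``\(\cof(\OR)\geq x\)'' at \(x=\kappa_n\) (true in \(V_\delta\), since \(\cof(\delta)=\kappa_n\), but \(j_n(\kappa_n)=\kappa_{n+1}>\cof(\delta)\)), and \(k_{n-1}\) does not preserve ``\(\cof(\OR)>x\)'' at \(x=\kappa_{n-1}\); so \(k_{n-1},k_n\notin\mathscr{E}_2(V_\delta)\).

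\textbf{The nested construction and the main obstacle.} The remaining iterates \(k_0,\dots,k_{n-2}\) are the crux, and here the monotonicity of such comparisons bites: a \emph{single} singular \(\delta\) has one cofinality, so any first-order comparison of \(\cof(\OR)\) with a parameter has its one threshold at \(\kappa_n\), and the monotone shifts \(\kappa_m\mapsto\kappa_{m+1}\) cross it only for \(m\in\{n-1,n\}\). To catch all \(m\leq n\) I would instead arrange \(\delta\) to carry a definable nested tower \(\mu_0<\mu_1<\dots<\mu_n=\delta\) with \(\cof(\mu_m)=\kappa_m\), each \(\mu_m\) singled out inside \(V_\delta\) by a reflecting property; then for each \(m\) the parameter \(\kappa_m\) together with the \(\Pi_2\) clause ``the \(m\)-th member of the tower has cofinality \(\geq x\)'' is moved by \(j_m\) across the threshold \(\kappa_m\), giving \(k_m\notin\mathscr{E}_2(V_\delta)\). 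This is precisely the ``securing fixed points'' point of the footnote: one must pin the tower by fixed points of \(j\) lying below \(\lambda^+\), so that the \(\mu_m\) and their cofinal sequences are correctly identified by the \(\Sigma_2\)-expressible clauses, and one must check that a \(\delta\) carrying such a tower exists below \(\lambda^+\) (intersecting the stationary set above with the set on which the tower's definitions reflect, using that \(\lambda^+\) has enough fixed points of \(j\) beneath it — the observation that lets the argument run from the bare hypothesis \(j\in\mathscr{E}(V_{\lambda^+})\)). I expect this simultaneous engineering — a genuinely \(\Sigma_2\)/\(\Pi_2\), hence unbounded-quantifier yet set-witnessed, failure for each of \(k_0,\dots,k_n\) at once, with the later iterates kept nicely stable so that such a \(\delta\) exists — to be the main difficulty; by comparison the reflection for the outer frame, the prewellorder coding of cofinality, and the \(\Pi_2\)-preservation under \(\Sigma_2\)-elementarity are routine.
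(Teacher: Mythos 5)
Your proof is not complete where it matters most, and the place where you stop is based on a miscalculation. You correctly set up a $\delta$ with $j``\delta\sub\delta$ and $\cof(\delta)=\kappa_n$, and a $\Pi_2$-expressible comparison of $\cof(\OR^{V_\delta})$ with an ordinal parameter; but you then assert that the shift ``$\kappa_m\mapsto\kappa_{m+1}$'' crosses the threshold $\kappa_n$ only for $m\in\{n-1,n\}$, and so you defer $k_0,\dots,k_{n-2}$ to an unexecuted ``nested tower'' construction which you yourself flag as the main difficulty. In fact each iterate $k_m$ with $m\leq n$ moves \emph{every} $\kappa_i$ with $i\geq m$, not just its own critical point: $k_m(\kappa_n)=\kappa_{n+1}$ for all $m\leq n$. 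Hence your single $\Pi_2$ statement ``$\cof(\OR)\geq x$'' evaluated at the one parameter $x=\kappa_n$ already witnesses the failure of $\Sigma_2$-elementarity for all of $k_0,\dots,k_n$ simultaneously, and no tower is needed. This is exactly the paper's argument, phrased there with a wellorder $W\in V_{\lambda+1}$ of $\lambda$ of ordertype $\delta$: the $\Pi_2$ sentence ``every proper segment of $W$ has an ordertype'' holds in $V_\delta$, while $\ot(k_m(W))>\delta$ for every $m\leq n$ because $W$ order-embeds into $k_m(W)$ via $k_m\rest\lambda$ and $\cof(\ot(k_m(W)))=k_m(\kappa_n)=\kappa_{n+1}\neq\kappa_n=\cof(\delta)$. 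So the remaining ``crux'' of your plan is both missing and unnecessary.

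There is a second genuine gap in your outer frame: you obtain $\delta$ by a stationarity argument that explicitly invokes the regularity of $\lambda^+$. The ambient theory is $\ZF$, and the paper pointedly does \emph{not} know that $\lambda^+$ is regular (see its footnote); without regularity, neither ``$\{\delta:\cof(\delta)=\kappa_n\}$ is stationary'' nor even the unboundedness of $\{\delta<\lambda^+: j``\delta\sub\delta\}$ is automatic, since sups of $\lambda$-indexed sets of ordinals below $\lambda^+$ could a priori reach $\lambda^+$. The paper's substitute is a nontrivial claim, proved from $j\in\mathscr{E}(V_{\lambda^+})$ alone, that the fixed points of $j$ below $\lambda^+$ have ordertype $\lambda^+$ (using that iterates stabilize each ordinal and a cofinality-$\om$ trick); it then takes $\delta$ to be the supremum of the first $\kappa_n$ fixed points of $j$ above $\lambda$, which gives $j``\delta\sub\delta$ and $\cof(\delta)=\kappa_n$ at once. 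You gesture at ``enough fixed points beneath $\lambda^+$'' at the end, but as written this step is unjustified. (A lesser issue: your parenthetical that the cofinality clause inside $\exists R\,\exists g$ is $\Pi_1$ is off as stated---the natural formulation of ``the $R$-ranks are unbounded in $\OR$'' is $\forall\exists$---but this is repairable, e.g.\ by quantifying universally over rank-bounding functions, or by using the paper's $W$-formulation directly.)
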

\begin{proof}
First consider $n=0$.
 Let $\kappa=\crit(j)$ and $\delta=\lambda+\kappa$
and $k=j\rest V_\delta$. Since $j(\lambda)=\lambda$
and $j\rest\kappa=\id$, we have $k:V_\delta\to V_\delta$,
and clearly $k$ is $\in$-cofinal and $\Sigma_0$-elementary,
hence $\Sigma_1$-elementary. But consider the $\Pi_2$ formula
\[ 
\varphi(\dot{\kappa},\dot{\lambda})=\text{``}
\all\alpha<\dot{\kappa}\ 
\exists\xi\in\OR\ [\xi=\dot{\lambda}+\alpha]\text{''}.\]
Then $V_{\lambda+\kappa}\sats\varphi(\kappa,\lambda)$,
but $V_{\lambda+\kappa}\sats\neg\varphi(j(\kappa),j(\lambda))$;
that is, $V_{\lambda+\kappa}\sats\neg\varphi(j(\kappa),\lambda)$, since
$\alpha=\kappa<j(\kappa)$, but $\lambda+\kappa\not\in V_{\lambda+\kappa}$.
For this example, $k_1(\kappa)=\kappa=\cof(\lambda+\kappa)$,
so $k_1$ is fully elementary, by Theorem \ref{tm:an_iterate_is_elementary}.

Now let $n$ be arbitrary.

\begin{clm*}$j$ has $\lambda^+$-many fixed points $<\lambda^+$.\end{clm*}
\begin{proof} Let $F_n=\{\alpha<\lambda^+:j_n(\alpha)=\alpha\}$.
By Theorem 
\ref{tm:an_iterate_is_elementary}, $\lambda^+=\bigcup_{n<\om}F_n$.
The ordertypes $\alpha_n$ of the $F_n$ are then either unbounded in $\lambda^+$,
or some $\alpha_n=\lambda^+$,
since otherwise one easily constructs a surjection $\pi:\lambda\to\lambda^+$
(consider the uncollapse maps $\pi_n:\alpha_n\to F_n$). Now $F_0$ is 
unbounded in $\lambda^+$.
For suppose not, and let $\sup(F_0)<\beta_0<\lambda^+$.
Let $\pi_0:\lambda\to\beta_0$ be a surjection.
Let $\pi_{n+1}=j(\pi_n)$ and 
$\beta_{n+1}=\rg(\pi_{n+1})=j(\beta_n)$.
From $\left<\pi_n\right>_{n<\om}$
we get a surjection $\lambda\to\beta=\sup_{n<\om}\beta_n$.
Therefore $\beta<\lambda^+$, but note $\cof(\beta)=\om$,
so $j(\beta)=\beta$,  contradicting the choice of $\beta_0$.
Now $\alpha_0=\lambda^+$.\footnote{We don't 
 know that $\lambda^+$ is regular; the first author has results
 in regard to this. So we can't just use the fact that $F_0$ is unbounded in 
$\lambda^+$ here.}
For suppose not.
Then note  $\alpha_{n+1}=\sup j``\alpha_n=\sup j_n``\alpha_n$
(using that $F_n$ is cofinal in $\lambda^+$).
Then letting $\alpha_0<\eta\in F_0$,
note $\alpha_n<\eta$ for all $n<\om$, a contradiction.
\end{proof}

Now let $\delta$ be the supremum of the
first $\crit(j_n)$ fixed points of $j$ which are $>\lambda$.
Then $j``\delta\sub\delta$, so $k=j\rest V_\delta\in\mathscr{E}_1(V_\delta)$.
Let $W$ be a wellorder of $\lambda$ in ordertype $\delta$
(note $\lambda<\delta<\lambda^+$, so $W$ exists).
Then
\begin{equation}\label{eqn:stmt} V_\delta\sats\text{``every proper segment  of
}W\text{ has ordertype some }\alpha\in\OR\text{''}.
\end{equation}
But for $m\leq n$, $k_m(W)$ is a wellorder of $k_m(\lambda)=\lambda$ in 
ordertype some $\delta'_m$,
and $\delta<\delta'_m$,
because (i) the ordertype of $W$ is $\leq$ that of 
$k_m(W)$,
and (ii) $\cof(W)=\crit(k_n)$,
so $\cof(k_m(W))=k_m(\crit(k_n))=\crit(k_{n+1})$.
Since $\delta<\delta'_m$,
$V_\delta$ does not satisfy line (\ref{eqn:stmt}) with $W$ replaced by
$k_m(W)$, so $k_m$ is not $\Sigma_2$-elementary.
\end{proof}

\section{Which ordinals are large enough?}\label{sec:large_enough}

We said in the introduction that if an ordinal $\eta$ is large enough,
then $V_{\eta+183}$ and $V_{\eta+184}$ are very different
from each other. Of course, we have seen that there are such
differences assuming there is an elementary $j:V_{\eta+184}\to V_{\eta+184}$.
So we could take this as the definition of ``large enough'',
but then the term is not very natural, because
then it needn't be that $\eta+1$ is also ``large enough''.
To get a good notion of ``large enough'', we assume that there is a Reinhardt 
cardinal. Let then $j:V\to V$ be elementary with $\kappa_{\om}(j)$
minimal. Then we say that $\eta$ is ``large enough''
iff $\eta\geq\kappa_{\om}(j)$.
Below, $\ZF(j)$ denotes the Zermelo Fr\"ankel axioms
in the language $\mathscr{L}_{j}$ with symbols $\in,j$,
augmented with Collection and Separation for all formulas
in $\mathscr{L}_j$.
Under this theory, we can assert that ``$j:V\to V$ is elementary''
with the single formula ``$j:V\to V$ is $\Sigma_1$-elementary'',
by Fact \ref{fact:ZF_Sigma_1-elem_cofinal_implies_full_elem}.
The following 
theorem was mentioned to the first author by Koellner a few years ago,
but may be folklore.
There are some further related things in
\cite{reinhardt_non-definability}:

\begin{tm}[Folklore?]\label{tm:lambda_rank-Berkeley}
 Assume $\ZF(j)$ and $j:V\to V$ is elementary (non-identity).
 Let $\lambda=\kappa_\om(j)$.
 Then for all $\alpha\geq\lambda$ and all $\eta<\lambda$, there is an elementary
 $k:V_\alpha\to V_\alpha$ such that $\crit(k) > \eta$ and \(\kappa_\omega(k) = \lambda\).
\end{tm}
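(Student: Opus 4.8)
The plan is to prove the apparently stronger statement that for \emph{every} $n<\om$, every $\alpha\geq\lambda$ carries a non‑trivial fully elementary $k:V_\alpha\to V_\alpha$ with $\crit(k)\geq\kappa_n$ and $\kappa_\om(k)=\lambda$, where $\kappa_n=\kappa_n(j)$. The theorem follows at once: given $\eta<\lambda$, pick $n$ with $\kappa_n>\eta$ (possible since $\lambda=\sup_m\kappa_m$). Throughout I use the standard facts $j(\lambda)=\lambda$, $j(\kappa_n)=\kappa_{n+1}$, $j(V_\alpha)=V_{j(\alpha)}$ (as in Lemma \ref{lem:Sigma_1-elem_pres_rank}), that $j$ is order‑preserving on $\OR$ (so $j(\beta)\geq\beta$ for all $\beta$), and that in $\ZF(j)$ the map $j:V\to V$ is \emph{fully} elementary for $\in$‑formulas (Fact \ref{fact:ZF_Sigma_1-elem_cofinal_implies_full_elem}).

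First I would settle the \emph{fixed‑point case}: if $\delta\geq\lambda$ and $j(\delta)=\delta$, then $g:=j\rest V_\delta$ is a map $V_\delta\to V_\delta$ (since $j``V_\delta\sub j(V_\delta)=V_\delta$), it is non‑trivial, $\crit(g)=\kappa_0$, and $\kappa_\om(g)=\lambda$. Moreover $g$ is \emph{fully} elementary: using Tarski's $\in$‑definability of satisfaction in set models, $V_\delta\sats\varphi(\vec x)$ iff $V\sats$``$V_\delta\sats\varphi(\vec x)$'' iff, applying $j$, $V\sats$``$V_{j(\delta)}\sats\varphi(j(\vec x))$'' iff (as $j(\delta)=\delta$) $V_\delta\sats\varphi(j(\vec x))$. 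Now $g$ is itself a \emph{set}, so we may form $j^{(m)}(g):=j(j(\cdots j(g)\cdots))$ ($m$ applications of the class $j$). Since ``$g$ is a non‑trivial fully elementary embedding $V_\delta\to V_\delta$ with $\crit=\kappa_0$ and $\kappa_\om=\lambda$'' is a true $\in$‑statement about the sets $g,V_\delta,\kappa_0,\lambda$, repeatedly applying $j$ and using $j(\delta)=\delta$, $j(\kappa_0)=\kappa_1$ (hence $\crit(j^{(m)}(g))=\kappa_m$), and $j(\lambda)=\lambda$, we obtain that each $j^{(m)}(g)$ is a non‑trivial fully elementary $V_\delta\to V_\delta$ with $\crit=\kappa_m$ and $\kappa_\om=\lambda$. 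Thus, setting
\[S_n=\{\beta\geq\lambda:\ \exists k\,[\,k:V_\beta\to V_\beta\text{ non‑trivial fully elem.},\ \crit(k)\geq\kappa_n,\ \kappa_\om(k)=\lambda\,]\},\]
every $j$‑fixed $\delta\geq\lambda$ lies in $S_n$ for all $n$; in particular $\lambda\in S_n$.

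The main step is a minimization argument showing $S_n=[\lambda,\OR)$ for each $n$, which then proves the stronger statement (and hence the theorem) with no case split on whether $\alpha$ is closed under $j$, limit, successor, etc. The crucial point is that $S_n$ is definable over $V$ \emph{from the ordinal parameters $\lambda,\kappa_n$ alone}, with no reference to the predicate $j$ (the satisfaction relation, ``$k$ is a function $V_\beta\to V_\beta$'', $\crit$ and $\kappa_\om$ are all $\in$‑definable). Suppose $S_n\neq[\lambda,\OR)$ and let $\alpha_0$ be the least ordinal $\geq\lambda$ with $\alpha_0\notin S_n$. Applying $j$ to the $\in$‑formula asserting ``$\alpha_0$ is the least ordinal $\geq\lambda$ failing [the defining formula of $S_n$ in the parameters $\lambda,\kappa_n$]'', and using $j(\lambda)=\lambda$ and $j(\kappa_n)=\kappa_{n+1}$, we learn that $j(\alpha_0)$ is the least ordinal $\geq\lambda$ not in $S_{n+1}$. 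But $S_{n+1}\sub S_n$ (a witness $k$ with $\crit(k)\geq\kappa_{n+1}$ also has $\crit(k)\geq\kappa_n$), so the least ordinal $\geq\lambda$ outside $S_{n+1}$ is $\leq\alpha_0$; that is, $j(\alpha_0)\leq\alpha_0$, and since $j(\alpha_0)\geq\alpha_0$ we get $j(\alpha_0)=\alpha_0$. The fixed‑point case then gives $\alpha_0\in S_n$, a contradiction. Hence $S_n=[\lambda,\OR)$, as desired.

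I expect the only real work to be in the bookkeeping around definability: (a) that $j\rest V_\delta$ is \emph{fully} (not merely $\Sigma_1$‑) elementary when $j(\delta)=\delta$, which genuinely uses that $\ZF(j)$ delivers full elementarity of $j$ together with definable set‑model satisfaction; and (b) that $S_n$ is set up with the predicate $j$ nowhere in its definition — only the ordinals $\lambda$ and $\kappa_n$ as parameters — so that ``applying $j$ to its definition'' is legitimate and has exactly the effect $\kappa_n\mapsto\kappa_{n+1}$, $\lambda\mapsto\lambda$. Everything else (order‑preservation, $j(\lambda)=\lambda$, $j(\kappa_n)=\kappa_{n+1}$, the monotonicity $S_{n+1}\sub S_n$, and $j^{(m)}(g)$ having critical point $\kappa_m$) is routine.
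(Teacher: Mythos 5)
Your proof is correct. It is the same basic mechanism as the paper's — a least-counterexample reflection using that the relevant class is $\in$-definable (no occurrence of the predicate $j$) from parameters that $j$ controls, with a restriction of $j$ ultimately serving as the witness — but your bookkeeping is genuinely different and somewhat more elaborate. The paper minimizes over the \emph{pair} $(\eta,\alpha)$ using only the $j$-fixed parameter $\lambda$; the lexicographically least counterexample is then fixed by $j$, and the requirement $\crit(k)>\eta$ comes for free from the observation that a $j$-fixed ordinal $\eta<\lambda$ must lie below $\crit(j)$ (any $\eta\in[\kappa_n,\kappa_{n+1})$ satisfies $j(\eta)\geq\kappa_{n+1}>\eta$), so $j\rest V_\alpha$ itself is the contradiction. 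You instead fix $n$, minimize only over $\alpha$ with the moving parameter $\kappa_n$, compensate for the shift $\kappa_n\mapsto\kappa_{n+1}$ by the monotonicity $S_{n+1}\subseteq S_n$ to force $j(\alpha_0)=\alpha_0$, and then boost the critical point by applying $j$ iteratively to the set $j\rest V_{\alpha_0}$ so that $j^{(m)}(j\rest V_{\alpha_0})$ has critical point $\kappa_m$. Both devices are sound; yours proves the slightly sharper statement that witnesses with $\crit(k)\geq\kappa_n$ and $\kappa_\om(k)=\lambda$ exist at every $\alpha\geq\lambda$ (which the paper's argument also yields, just less explicitly), at the cost of the extra fixed-point lemma and the per-$n$ minimization, whereas the paper's single lex-minimization over pairs makes the whole thing a four-line argument. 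Your attention to the two definability points — full elementarity of $j\rest V_\delta$ via the definable satisfaction predicate when $j(\delta)=\delta$, and keeping $j$ out of the definition of $S_n$ so that elementarity of $j$ (which only applies to $\in$-formulas) can be invoked — is exactly where the care is needed, and matches what the paper's proof implicitly relies on.
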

\begin{proof}
Suppose not and let $(\eta,\alpha)$ be the lexicographically
least counterexample. Then $(\eta,\alpha)$ is definable
from the parameter $\lambda$, and hence fixed by $j$.
But then $j(\alpha)=\alpha$,
so $j\rest V_\alpha:V_\alpha\to V_\alpha$,
and $j(\eta)=\eta<\lambda$, so $\eta<\crit(j)=\crit(j\rest V_\alpha)$,
so $j\rest V_\alpha$ contradicts the choice of $(\eta,\alpha)$.
\end{proof}

So above $\lambda=\kappa_\om(j)$, the cumulative hierarchy
is periodic the whole way up.

\begin{rem}For the reader familiar with \cite{woodin_koellner_bagaria},
note that the property stated of $\lambda=\kappa_\om(j)$
in the theorem above is just that of a Berkeley cardinal
(see \cite{woodin_koellner_bagaria})
\emph{with respect to rank segments of $V$}
(except that we have also stated it for $V_\lambda$ itself,
although $\lambda\notin V_\lambda$).
One could call such a $\lambda$ a \emph{rank-Berkeley cardinal}.
Note that unlike Reinhardtness, rank-Berkeleyness is first-order. 
If there is a Reinhardt, then which is less, 
the least Reinhardt or the least rank-Berkeley?
If $j:V\to V$ and $\lambda=\kappa_\om(j)$ is the least
rank-Berkeley, then note that for every $k:V\to V$ with $\crit(k)<\lambda$, we 
have 
$\lambda_{\om,k}=\lambda$.
In particular, if $\kappa$ is super Reinhardt
then the least rank-Berkeley is ${<\kappa}$.
We show next that the least rank-Berkeley being below the least Reinhardt,
has consistency strength beyond that of a Reinhardt.

We remark that  arguing further as above shows that every
rank-Berkeley is $\HOD$-Berkeley. Can be/is the least $\HOD$-Berkeley
${<}$ the least rank-Berkeley?
\end{rem}

\begin{tm}
 Suppose $(V,j)\sats\ZF(j)$ and $j:V\to V$, and let $\kappa=\crit(j)$
 and $\lambda=\kappa_{j,\om}$, and suppose the least rank-Berkeley is
$\delta<\lambda$.
Let $\mu_j$ be the normal measure over $\kappa$ derived from $j$.
Then $\delta<\kappa$ and there is $\kappa'<\delta$
such that for $\mu_j$-measure one many $\gamma<\kappa$,
$(V_\gamma,V_{\gamma+1})\sats$``$\kappa'$ is a Reinhardt cardinal''.
\end{tm}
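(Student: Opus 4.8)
The plan is to exploit the minimality of the rank-Berkeley cardinal $\delta$ together with the fact that, assuming $\ZF(j)$, the objects $\kappa$, $\lambda$, and $\delta$ are all definable without parameters in $(V,j)$, hence fixed by $j$ up to the obvious constraints. First I would argue that $\delta<\kappa$. Suppose not, so $\kappa\leq\delta<\lambda$. Since $\delta$ is the \emph{least} rank-Berkeley cardinal, it is definable over $(V,j)$ without parameters, so $j(\delta)=\delta$. But $\delta$ is rank-Berkeley, so in particular (applying the rank-Berkeley property at a rank segment above $\delta$, or directly) there are embeddings with critical point $<\delta$ and critical sequence cofinal in $\delta$; more to the point, rank-Berkeleyness of $\delta$ means that for every $\alpha$ and every $\eta<\delta$ there is an elementary $k:V_\alpha\to V_\alpha$ with $\eta<\crit(k)<\delta$. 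Taking $\eta\geq\kappa$ would give such a $k$ with $\kappa<\crit(k)<\delta\leq\delta<\lambda$, and then $\kappa_\om(k)\leq\delta<\lambda$. I would combine this with the observation from the remark preceding the theorem — that for the \emph{least} rank-Berkeley $\lambda_0$, every $k:V\to V$ with $\crit(k)<\lambda_0$ has $\kappa_\om(k)=\lambda_0$ — to derive that in fact $\delta$ itself satisfies a closure property forcing $\kappa\leq\delta$ to be untenable, since $\kappa=\crit(j)$ and $\lambda=\kappa_\om(j)$ with $j:V\to V$ means $\lambda$ is rank-Berkeley, so $\lambda$ is the least one only if $\delta=\lambda$, contradicting $\delta<\lambda$ unless $\delta<\kappa$. (This is the step I would write most carefully; the interplay of "least rank-Berkeley" with the critical points needs to be pinned down exactly, and I expect this to be the main obstacle.)

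Next, granting $\delta<\kappa$: since $\delta$ is rank-Berkeley, for every ordinal $\alpha$ there is an elementary $k:V_\alpha\to V_\alpha$ with $\crit(k)<\delta$, and by the remark, $\kappa_\om(k)=\delta$ for all such $k$ with critical point below $\delta$; but also by rank-Berkeleyness we can get $\crit(k)$ arbitrarily large below $\delta$, giving the standard fact that $\delta$ is a limit of critical points below itself, and indeed that there are many $\kappa'<\delta$ which are themselves Reinhardt \emph{inside suitable rank segments}. Concretely, I would fix a witnessing embedding $k:V_{\delta+2}\to V_{\delta+2}$ (or at an appropriate successor level above $\delta$) with $\crit(k)=\kappa'<\delta$, noting $\kappa_\om(k)=\delta$; then $k$ restricted appropriately, or rather the reflection of "$\kappa'$ is Reinhardt in $V_{\gamma}$" downward, is the goal. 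The point is that "$(V_\gamma, V_{\gamma+1})\sats$ ``$\kappa'$ is a Reinhardt cardinal''" is a statement about $\gamma$ that I want to hold on a $\mu_j$-measure-one set of $\gamma<\kappa$.

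For that last part, I would use the derived normal measure $\mu_j$ over $\kappa$ and the standard reflection argument: $\kappa'<\delta<\kappa$, and in $\Ult(V,\mu_j)=M$ we have $j(\kappa')=\kappa'$ (as $\kappa'<\kappa=\crit(j)$), and the statement "$(V_\gamma,V_{\gamma+1})\sats$ ``$\kappa'$ is Reinhardt''" holds for $\gamma=\kappa$ if and only if $(V_\kappa, V_{\kappa+1})\sats$ ``$\kappa'$ is Reinhardt'', which follows because $j\rest V_\kappa$-type embeddings and, more to the point, because $V_{\kappa}$ inherits enough of the rank-Berkeley structure at $\delta$ — here I would verify that $(V_\kappa, V_{\kappa+1})$ sees a witness elementary embedding $V_\kappa\to V_\kappa$ with critical point $\kappa'$ and critical sequence cofinal in $\delta<\kappa$... and then conclude by elementarity of $j$ that the set $\{\gamma<\kappa : (V_\gamma,V_{\gamma+1})\sats \text{``}\kappa'\text{ is Reinhardt''}\}$ is in $\mu_j$, since $j$ maps it to a set containing $\kappa$. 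The routine bookkeeping — that "$x$ is a Reinhardt cardinal" relativized to $(V_\gamma,V_{\gamma+1})$ is expressible and that the witnessing embedding at level $\kappa$ exists — I would leave largely to the reader, flagging only that one uses $\ZF(j)$ to get the global embedding $j$ and then the derived measure $\mu_j$ to do the reflection.
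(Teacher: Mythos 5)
The one genuine gap is exactly the step you flagged: your argument that $\delta<\kappa$ never closes. The chain ``$\lambda$ is the least one only if $\delta=\lambda$, contradicting $\delta<\lambda$ unless $\delta<\kappa$'' just restates the desired conclusion, and the detour through embeddings with $\kappa<\crit(k)<\delta$ and the observation that $\kappa_\om(k)\leq\delta$ does not yield a contradiction. But you already wrote down the ingredient that finishes it. Since rank-Berkeleyness is first-order in $\in$ alone (as the paper's preceding remark notes), the least rank-Berkeley $\delta$ is definable over $V$ without parameters --- and it matters that this is definability in the language of $\in$, not over $(V,j)$, since $j$ need not preserve formulas mentioning $j$ itself. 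Hence $j(\delta)=\delta$. Now use the standard fact that $j$ has no fixed points in $[\kappa,\lambda)$: if $\kappa\leq\alpha$ and $j(\alpha)=\alpha$, then $\alpha=j(\alpha)\geq j(\kappa)=\kappa_1$, and inductively $\alpha\geq\kappa_n$ for every $n$, so $\alpha\geq\lambda$. Since $\delta<\lambda$ and $j(\delta)=\delta$, this forces $\delta<\kappa$ in one line; none of the material about $\kappa_\om(k)$ is needed.

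The rest of your plan is sound and is essentially what the paper compresses into ``the theorem follows routinely'': take a witness at level $\kappa$, observe $j(\kappa')=\kappa'$ because $\kappa'<\kappa=\crit(j)$, and conclude that the set $A=\{\gamma<\kappa:(V_\gamma,V_{\gamma+1})\sats\text{``}\kappa'\text{ is Reinhardt''}\}$ lies in $\mu_j$ because $\kappa\in j(A)$. One correction of emphasis: do not start from a witness $k:V_{\delta+2}\to V_{\delta+2}$ and then try to ``transfer'' it; the $\kappa'$ in the conclusion must be the critical point of an elementary $k:V_\kappa\to V_\kappa$, and rank-Berkeleyness of $\delta$ applied directly at $\alpha=\kappa>\delta$ hands you such a $k$ with $\crit(k)=\kappa'<\delta$ outright (with $\kappa$ inaccessible, so that $(V_\kappa,V_{\kappa+1})$ is a model of second-order $\ZF$ in which $k$ witnesses Reinhardtness of $\kappa'$). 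With that adjustment and the fixed-point argument above, your proof matches the paper's.
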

\begin{proof}
Suppose $\delta<\lambda$ is rank-Berkeley, so $\delta<\kappa$.
Then there is $k:V_\kappa\to V_\kappa$ which is elementary
and non-identity.
 Let $\kappa'=\crit(k)$. Then $\kappa$ is inaccessible
 and $(V_\kappa,V_{\kappa+1})\sats\ZF_2+$``$\kappa'$ is Reinhardt,
 as witnessed by $k$''.
Since $\kappa=\crit(j)$, the theorem follows routinely.
\end{proof}

\begin{cor}
 Suppose 
 $\ZF(j)+\text{``}j:V\to V\text{''}$
 is consistent.
 Then so is
 \[ \ZF(j)+\text{``}j:V\to V\text{''}+\text{``}\kappa_\om(j)\text{
 is the least rank-Berkeley}\text{''}.\]
\end{cor}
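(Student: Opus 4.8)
The plan is to prove the contrapositive: assuming the displayed theory $T' := \ZF(j)+\text{``}j:V\to V\text{''}+\text{``}\kappa_\om(j)\text{ is the least rank-Berkeley''}$ is inconsistent, I will derive that $T := \ZF(j)+\text{``}j:V\to V\text{''}$ is inconsistent as well (both theories being read, as throughout the paper, with ``$j:V\to V$'' abbreviating ``$j:V\to V$ is elementary''). By Theorem \ref{tm:lambda_rank-Berkeley} and the remark following it, $T$ proves that $\kappa_\om(j)$ is rank-Berkeley, hence that the least rank-Berkeley cardinal exists and is $\leq\kappa_\om(j)$; so if $T'$ is inconsistent, then $T$ proves the sentence $\varphi := \text{``there is a rank-Berkeley cardinal strictly below }\kappa_\om(j)\text{''}$. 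Since $T$ is recursively axiomatized and extends $\ZF$, the usual provable soundness theorem (applied to the standard $T$-derivation of $\varphi$, and to the standard $T$-derivation of the previous theorem) shows that $T$ also proves ``every set model of $T$ satisfies $\varphi$'' and ``every set model of $T$ satisfies the previous theorem''.

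The rest is a least-ordinal argument carried out in $T$. Reasoning in $T$: let $C$ be the definable class of ordinals $\gamma$ such that $(V_\gamma,k)\models T$ for some $k\in V_{\gamma+1}$ (here $(V_\gamma,k)$ is a structure for the language of $\ZF(\dot{A})$, with $k$ a binary relation on $V_\gamma$ coded via the flat pairing, so genuinely $k\in V_{\gamma+1}$). I claim $C\neq\emptyset$: by $\varphi$ the least rank-Berkeley is below $\kappa_\om(j)$, so by the previous theorem there is $\kappa'<\kappa_\om(j)$ such that, for $\mu_j$-measure one — in particular for some — $\gamma<\crit(j)$, the structure $(V_\gamma,V_{\gamma+1})$ is a model of $\ZF_2$ in which $\kappa'$ is Reinhardt; fixing such a $\gamma$ and a predicate $k\in V_{\gamma+1}$ witnessing Reinhardtness, we get $(V_\gamma,k)\models T$, since the Collection and Separation schemata of $\ZF(j)$ hold in $(V_\gamma,k)$ because $(V_\gamma,V_{\gamma+1})\models\ZF_2$, and $k$ witnesses ``$j:V\to V$ is elementary'' (using Fact \ref{fact:ZF_Sigma_1-elem_cofinal_implies_full_elem} if necessary). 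So $C\neq\emptyset$; by the least-ordinal principle fix $\gamma_0=\min C$ together with a witness $k_0$, so $(V_{\gamma_0},k_0)\models T$.

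Finally I would contradict the minimality of $\gamma_0$. By the remarks in the first paragraph, $(V_{\gamma_0},k_0)$, being a model of $T$, satisfies $\varphi$ and also satisfies the previous theorem. Hence, inside $(V_{\gamma_0},k_0)$, the least rank-Berkeley lies below $\kappa_\om(k_0)$, so the previous theorem applies there and produces an ordinal $\gamma_1<\crit(k_0)$ and a predicate $k_1\in V_{\gamma_1+1}$ with $(V_{\gamma_1},k_1)\models T$ in the sense of $(V_{\gamma_0},k_0)$. But $\gamma_1<\crit(k_0)<\kappa_\om(k_0)<\gamma_0$, so $\gamma_1$ is a genuine ordinal below $\gamma_0$; the sets $V_{\gamma_1},V_{\gamma_1+1}$ and the predicate $k_1$ are computed correctly inside the transitive set $V_{\gamma_0}$; and satisfaction of the (finite-language, recursive) theory $T$ by the set structure $(V_{\gamma_1},k_1)$ is absolute. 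Therefore $\gamma_1\in C$ with $\gamma_1<\gamma_0$, contradicting $\gamma_0=\min C$. The genuine mathematical content is supplied entirely by the previous theorem; the delicate points are the bookkeeping ones just indicated — that a model of $\ZF_2$ with a Reinhardt yields a model of $T$, and the standard absoluteness and provable-soundness facts used to pass between a model of $T$, its set sub-models, and the metatheory — and I expect no real obstacle there.
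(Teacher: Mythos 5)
Your proof is correct. The paper states this corollary without an explicit argument; the implicit route is the standard consistency-strength one: granting that $T'$ is inconsistent, $T$ proves (via Theorem \ref{tm:lambda_rank-Berkeley}) that the least rank-Berkeley is below $\kappa_\om(j)$, hence by the preceding theorem $T$ proves the existence of a set model of $T$, hence $\mathrm{Con}(T)$, so $T$ is inconsistent by G\"odel's second incompleteness theorem. You replace the appeal to G\"odel II with the least-ordinal (``no minimal $\gamma$ with $(V_\gamma,k)\models T$'') argument, which is a standard equivalent in this setting; the price is exactly the bookkeeping you identify (formalized soundness for set models, $\Sigma_1$-completeness to internalize the metatheoretic derivation of $\varphi$, and absoluteness of set-model satisfaction between $V$ and the transitive $V_{\gamma_0}$), and all of those steps go through as you say. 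The one point worth flagging is that you quote the preceding theorem as giving $(V_\gamma,V_{\gamma+1})\models\ZF_2+$``$\kappa'$ is Reinhardt'', whereas its statement literally asserts only the Reinhardtness clause; the $\ZF_2$ part is what its proof actually establishes (measure-one many $\gamma<\crit(j)$ are inaccessible, and the reflection is from $(V_\kappa,V_{\kappa+1})\models\ZF_2+{}$``$\kappa'$ is Reinhardt''), and you do need it to conclude $(V_\gamma,k)\models\ZF(j)$, so it would be cleaner to cite the proof rather than the statement there. With that caveat, the argument is complete; it is marginally more self-contained than the G\"odel II route (no incompleteness theorem needed), at the cost of the explicit reflection/absoluteness work.
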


This also gives that $\lambda=\kappa_{\om}(j)$ can be definable over $V$ without
parameters. But there is anyway another way to see that
$j:V\to V$ with 
$\lambda$ non-definable is stronger than just $j:V\to V$.
For since $\lambda$ is a limit of inaccessibles,
if $\lambda$ is non-definable, then
$V$ has inaccessibles  $\delta>\lambda$,
and taking the least such, $j(\delta)=\delta$, so
we get $(V_\delta,V_{\delta+1})\sats\ZF_2+$``There is a Reinhardt''
(actually the latter holds for every inaccessible $\delta>\lambda$,
since $j_n(\delta)=\delta$ for some $n$).
\section{Questions and related work}

In \S\ref{sec:limit_Sigma_1_elementarity} we 
ruled out the definability of $\Sigma_1$-elementary embeddings $j:V_\delta\to 
V_\delta$
for $\delta$ a limit. Note that we also observed
that for $\delta$ even, $\Sigma_1$-elementary maps
$j:V_{\delta+1}\to V_{\delta+1}$ are always
definable from the parameter $j\rest V_\delta$.
But what about  partially elementary maps
$V_{\delta+2}\to V_{\delta+2}$?
Can they be definable from parameters over $V_{\delta+2}$?
If so, what can one say about the complexity of the definition
in relation to the degree of elementarity?

One can also generalize the notion of ``definable from parameters''
to allow higher order definitions, such as looking in $L(V_\delta)$.
If $\delta$ is a limit and $L(V_\delta)\sats$``$\cof(\delta)>\om$''
then $L(V_\delta)$ has no elementary $j:V_\delta\to V_\delta$
(see \cite{extenders_ZF}; the case that $\delta$
is inaccessible was established earlier by the first author).
There is a little on the cofinality
$\om$ case in \cite{extenders_ZF},
but this case is much more subtle.

The existence of the canonical extension $j^+$ of an embedding $j:V_\lambda\to 
V_\lambda$ for limit $\lambda$ is of fundamental importance
to the analysis of $I_0$; see for example \cite{woodin_sem2}. But
this is now naturally generalized to all even $\lambda$.
It turns out that much of the $I_0$ theory generalizes
in turn, and this is one of the topics of \cite{goldberg_even_numbers}.

Of course a significant question looming over this work
is whether embeddings of the form we are considering can even exist.
Some recent progress in this regard, establishing the
consistency of $\ZF+j:V_{\lambda+2}\to V_{\lambda+2}$
relative to $\ZFC+I_0$, is the topic of \cite{con_lambda_plus_2}.
 \bibliographystyle{plain}
\bibliography{./bibliography_rank_embedding_def}

\pagebreak

\end{document}